\theoremstyle{plain}
\newtheorem{theorem}{Theorem}
\numberwithin{theorem}{section}
\newtheorem{lemma}[theorem]{Lemma}
\newtheorem{proposition}[theorem]{Proposition}
\newtheorem{corollary}[theorem]{Corollary}
\newtheorem{question}[theorem]{Question}
\theoremstyle{definition}
\newtheorem{definition}[theorem]{Definition}
\newtheorem{remark}[theorem]{Remark}
\newtheorem{example}[theorem]{Example}
\newtheorem{problem}[theorem]{Problem}
\newcommand{\ra}{\rightarrow}
\newcommand{\Z}{\mathbb{Z}}
\newcommand{\Pcal}{\mathcal{P}}
\newcommand{\Tcal}{\mathcal{T}}
\newcommand{\Dcal}{\mathcal{D}}
\newcommand{\T}{\overline{T}}
\title{Bridge multisections of knotted surfaces in $S^4$}
\author{Rom\'an Aranda}
\address{Department of Mathematics, University of Nebraska-Lincoln}
\email{jarandacuevas2@unl.edu}
\author{Carolyn Engelhardt}
\address{Department of Mathematics, University at Buffalo}
\email{cengelha@buffalo.edu}
\date{}
\begin{document}

\begin{abstract}
% This paper studies descriptions of surface-links in $S^4$ using tuples of trivial tangles called \emph{multiplane diagrams}. 
Bridge multisections are combinatorial descriptions of surface links in $S^4$ using tuples of trivial tangles. They were introduced by Islambouli, Karimi, Lambert-Cole, and Meier to study curves in rational surfaces. In this paper, we prove a uniqueness result for bridge multisections of surfaces in $S^4$: we give a complete set of moves relating to any two multiplane diagrams of the same surface. This is done by developing a surgery operation on multiplane diagrams called \emph{band surgery}. Another application of this surgery move is that any $n$--valent graph with an $n$--edge coloring is the spine of a bridge multisection for an unknotted surface. We also prove that any multisected surface in $S^4$ can be unknotted by finitely many band surgeries. 
%
% This paper studies new practical methods for representing knotted surfaces in $S^4$ diagrammatically called \emph{multiplane diagrams.} 
% In 2023, Meier, Thompson, and Zupan introduced local modifications for triplane diagrams that affect the embedded surface in a controlled way. %: by taking the connected sum with an unknotted $RP^2$ and performing 1--handle addition. 
% This article explores such operations in the context of bridge multisections. We have three applications of these moves: we show that any $n$--valent graph with an $n$--edge coloring is the spine of a bridge multisection for an unknotted surface. We prove a uniqueness result for bridge multisections: we give a complete set of moves relating any two bridge multisections of the same surface. We also prove that any multisected surface in $S^4$ can be unknotted by finitely many band surgeries. 
\end{abstract}

\maketitle

%%%%%%%%%%%%%%%%%%%%%%%%%%%%%%%
\section{Introduction}

% This paper studies descriptions of surface-links in $S^4$ using tuples of trivial tangles called \emph{multiplane diagrams}. 
Smoothly embedded surfaces in $S^4$ can be described using movies, marked vertex diagrams, broken surface diagrams, or via braid charts \cite{book_knotted_surfaces}. Some of these presentations resemble techniques for studying knots in 3--space. For instance, the 4-dimensional analog of a knot projection is a broken surface diagram, where the self-intersections of an immersed surface in 3--space are decorated with crossing information. %On the other hand, bridge splittings, often called plat projections, are alternative descriptions of links in 3--space \cite[Ch 27]{encyclopedia_knots}. 

Bridge splittings for links in 3--space, often called plat projections, are classic tools in knot theory \cite[Ch 27]{encyclopedia_knots}. 
In 2017, motivated by Gay and Kirby's theory of trisections of 4--manifolds \cite{GK}, Meier and Zupan introduced an analog of plats for surface-links in $S^4$ \cite{MZ}. \emph{Triplane diagrams} are triplets of trivial tangles such that the union of any two is a 3--dimensional bridge presentation of an unlink in $S^3$. Any surface can be described using a triplane diagram. 
Since their introduction, triplane diagrams have been used to study known surface-link invariants such as colorings \cite{Sato_coloring_tris}, Seifert solids \cite{JMMZ_solids}, triple point numbers \cite{JMMZ_classic}, and the fundamental group of the complement \cite{joseph_meridional_ranks}. Examples of new invariants obtained from triplane diagrams are the crossing number \cite{Zupan_simple}, group trisections \cite{blackwell_group_tris}, Nielsen equivalence \cite{JMMZ_classic}, and $\mathcal{L}$-invariants \cite{Blair_KT_inv,Aranda_KT_2023}. 

In 2022, Islambouli, Karimi, Lambert-Cole, and Meier introduced a generalization of triplane diagrams where the number of tangles is arbitrary \cite{islambouli2022toric}. 
A \emph{multiplane diagram} $\Tcal$ is a tuple $(T_1,\dots, T_n)$ of trivial tangles such that consecutive pairs, including the last and first tangles, form bridge presentations of unlinks in 3--space. As with triplanes, multiplane diagrams determine embeddings of surfaces in the \mbox{4--sphere.}
Invariants of triplane diagrams, such as the $\mathcal{L}$-invariant \cite{pants}, have been generalized to this setting. In Section \ref{sec:multisections}, we survey the known families of multiplane diagrams in the literature, such as 2--bridge multisections, completely decomposable diagrams, and multiplane diagrams of twist spun knots.

One advantage of multiplane diagrams is that some complexities of the multisected surface, like the bridge number, can be reduced at the expense of letting the number of tangles in a diagram be large. For instance, in Section \ref{sec:2-bridge}, we show that every unknotted surface admits a multisection with 2--bridge tangles, while the Euler characteristic of surfaces which can be represented by $b$--bridge triplane diagrams for a given $b$ is bounded. 

%

%In 2022, Islambouli, Karimi, Lambert-Cole, and Meier introduced a generalization of triplane diagrams where the number of tangles is arbitrary \cite{islambouli2022toric}. A \emph{multiplane diagram} $\Tcal$ is a tuple $(T_1,\dots, T_n)$ of trivial tangles such that consecutive pairs, including the last and first tangles, form bridge presentations of unlinks in 3--space. As with triplanes, multiplane diagrams determine embeddings of surfaces in the 4--sphere. Invariants of triplane diagrams, such as the $\mathcal{L}$-invariant \cite{pants}, have been generalized to this setting. In Section \ref{sec:multisections}, we survey the known families of multiplane diagrams in the literature, such as 2--bridge multisections, completely decomposable diagrams, and multiplanes of twist spun knots.

%The main result of this work is a uniqueness theorem for multiplane diagrams. 
The main contribution of this paper is a calculus for multiplane diagrams of smoothly embedded surfaces in $S^4$. 
Meier and Zupan introduced a complete set of triplane moves, relating any two triplane diagrams representing isotopic surfaces in $S^4$. In Section \ref{sec:multiplane_moves}, we expand the triplane moves from \cite{MZ} to a sufficient set of multiplane moves. %prove the main theorem of this work. 

\newtheorem*{thm_main1}{Theorem \ref{thm:main1}}
\begin{thm_main1}
Let $\Tcal_1$ and $\Tcal_2$ be two multiplane diagrams representing isotopic surfaces in $S^4$. There is a finite sequence of multiplane moves that turns $\Tcal_1$ into $\Tcal_2$.
\end{thm_main1}
%
% \begin{theorem}\label{thm:1}
% % A finite sequence of multiplane moves relates to any two multiplane diagrams of a given knotted surface.
% Let $\Tcal_1$ and $\Tcal_2$ be two multiplane diagrams representing isotopic surfaces in $S^4$. There is a finite sequence of multiplane moves that turns $\Tcal_1$ into $\Tcal_2$. 
% \end{theorem}

% The main technique of this work is 
We prove Theorem \ref{thm:main1} by exploring 
a more general surgery operation on multiplane diagrams called \emph{band surgery}. This process increases the bridge number by one while changing the surface link %in a controlled way: 
via isotopies, 
1–handle additions, or crosscap sums (Theorem \ref{thm:surgery_rhos_tubing}). Figure %s \ref{fig:exam_tube}, \ref{fig:exam_tube_2}, and
\ref{fig:exam_crosscap} depicts an example of band surgery on a multiplane diagram that adds two unknotted projective planes to a 2--sphere. % that turns an unknotted 2--sphere into an embedded Klein bottle in $S^4$. 
Figure \ref{fig:exam_tube_2} shows how to use band surgery to turn a 2--component unlink of 2--spheres into the spin of a trefoil knot. % via one 1--handle addition. 

%%%%%
\begin{figure}[h]
\centering
\includegraphics[width=.5\textwidth]{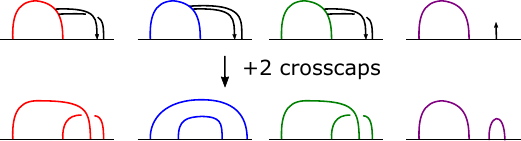}
\caption{Turning a $(1;1)$--bridge 4--section of an unknotted 2--sphere into a $(2;1)$--bridge 4--section of a Klein bottle.}
\label{fig:exam_crosscap}
\end{figure}
%%%%%

%%%%%
\begin{figure}[h]
\centering
\includegraphics[width=.5\textwidth]{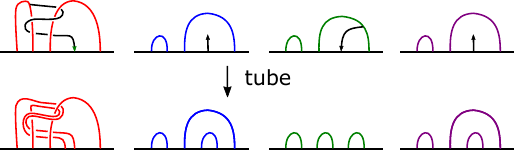}
\caption{Turning a $(2;2,2,2,2)$--bridge quadrisection of a 2--component unlink of \mbox{2--spheres} into a $(3;2,2,2,2)$--bridge quadrisection of a spun trefoil.}
\label{fig:exam_tube_2}
\end{figure}
%%%%%

In Section \ref{sec:connecting_multiplane_diagrams}, we prove any 1--handle addition on a multisected surface can be achieved by a sequence of band surgeries. Thus, band surgery is an unknotting move for multiplane diagrams, just as 1--handle addition can unknot any smooth surface in $S^4$ \cite{HK79b}.  

\newtheorem*{thm_main2}{Theorem \ref{thm:unknotting}}
\begin{thm_main2}
Let $\Tcal$ be an arbitrary multiplane diagram. There exists a finite sequence of band surgeries that turn $\Tcal$ into a multiplane diagram of an unknotted surface.  
\end{thm_main2}
%
% \begin{theorem}\label{thm:2}
% Let $\Tcal$ be an arbitrary multiplane diagram. There exists a finite sequence of band surgeries that turn $\Tcal$ into a multiplane diagram of an unknotted surface.  
% \end{theorem}

As a consequence of Theorems \ref{thm:main1} and \ref{thm:unknotting}, we obtain that any two multiplane diagrams are related by multiplane moves, band surgeries, and their inverses (see Theorem \ref{thm:uniqueness}). This motivates the problem of using band surgeries on multiplane diagrams to find bounds for the unknotting numbers of 2--knots discussed in \cite{unknotting_numbers}. Question~\ref{ques:unknotting} is motivated by the fact that the bridge index of knots in $S^3$ gives an upper bound to their tunnel number. 

\begin{question}\label{ques:unknotting}
Is there a relationship between the stabilization number of a surface $F\subset S^4$ and the bridge/patch numbers of a bridge multisection of $F$? 
\end{question}

Another application of band surgery is related to the graph isomorphism class of the spine of a bridge multisection. The spine of a multiplane diagram is the graph obtained by taking the endpoint union of its tangles. This graph is regular and equipped with edge coloring induced by the tangles in the given multiplane. In \cite{MTZ}, Meier, Thompson, and Zupan proved that every 3--valent graph with a 3--edge coloring arises as the spine of a bridge trisection for an unknotted surface. In Section \ref{sec:graphs}, we prove the analogous result for $n$--valent graphs. % bridge multisections with an arbitrary number of sectors. 

\newtheorem*{thm_main3}{Theorem \ref{thm:Tait_spines}}
\begin{thm_main3}
Let $\Gamma$ be an $n$--valent graph with an $n$--edge coloring. There is a bridge multisection $\Tcal$ for an unknotted surface in $S^4$ with 1--skeleton isomorphic to $\Gamma$ and inducing the edge coloring of $\Gamma$.
\end{thm_main3}

One way to interpret Theorem \ref{thm:Tait_spines} is that the isomorphism type of the spine of a multiplane diagram does not detect the knottedness of the underlying surface. On the other hand, by keeping track of the crossing information of each tangle of the spine, one can build distinct multiplane diagrams for the same knotted surface. In Section \ref{sec:spun_knots}, we show that any knotted surface in $S^4$ has infinitely many distinct multisections with at least four sectors. 

\newtheorem*{thm_main4}{Theorem \ref{prop:many_4_sections}}
\begin{thm_main4}
Suppose that $F\subset S^4$ admits a $b$--bridge $n$--section for some $b\geq 2$, $n\geq 4$. Then for any $b'\geq b+2$, $F$ admits infinitely many non-diffeomorphic $b'$--bridge $n$--sections. 
\end{thm_main4}

A consequence of Theorem \ref{prop:many_4_sections} is that the unknotted 2--sphere admits non-standard bridge multisections with at least four sectors, and so Otal's theorem does not hold for bridge multisections \cite{Otal}. 
%A consequence of Theorem \ref{thm:4} is that the 4--dimensional analog of Otal's theorem for unknots in the bridge position is false for bridge multisections \cite{Otal}. 
More precisely, not every multisection of the unknotted 2--sphere is perturbed.
% that the unknotted 2--sphere admits non-standard multisections with at least 4--sectors.
The existence of non-standard multisections was first observed in \cite[Thm 2.5]{islambouli2022toric} where the authors found infinitely many non-diffeomorphic 3--bridge 4--sections of the unknotted sphere. 
It is important to note that the existence of non-standard bridge trisections of the unknotted 2--sphere %, i.e., a bridge trisection version ($n=3$) of Theorem \ref{thm:4}, is still open. 
is still unknown. 

% \subsection*{Acknowledgements}
% Thank people here. 
\tableofcontents

%%%%%%%%%%%%%%%%%%%%%%%%%%%%%%%%%
\section{Background} 
We are interested in smooth embeddings of closed surfaces into $S^4$; even when not stated, smoothness will be assumed throughout this work. Following \cite{HK79b}, an embedded surface is said to be \emph{unknotted} if it can be decomposed as the distant sum\footnote{See Section~\ref{sec:sums} for the defition.} of connected sums of orientable unknotted surfaces and standard projective planes. Unknotted orientable surfaces are those that bound smoothly embedded 3--dimensional handlebodies. Equivalently, an orientable surface is unknotted if it can be isotoped to be embedded into $S^3\subset S^4$. There are two types of standard projective planes, $\Pcal_+$ and $\Pcal_-$, both differentiated by their Euler number; $e(\Pcal_\pm)=\pm 2$ \cite[Rem 2.6]{JMMZ_classic}. Figure \ref{fig:unknotted_rp2} depicts movies of $\Pcal_\pm$. 

%%%%%
\begin{figure}[h]
\centering
\includegraphics[width=.5\textwidth]{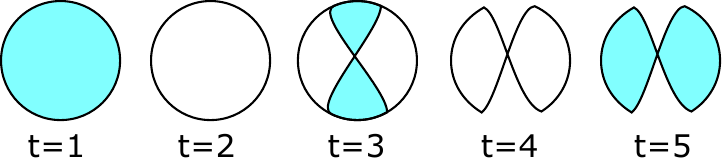}
\caption{Standard projective planes determined by different choices of crossing.}
\label{fig:unknotted_rp2}
\end{figure}
%%%%%

Given an embedded surface $F\subset S^4$, the \emph{crosscap sum} of $F$ is the new surface obtained by adding a $\Pcal_\pm$ summand to $F$. We say that $F'$ is obtained from $F$ by \emph{1--handle addition} if $F'$ is the result of replacing two small disks in $F$ with a thin tube connecting them \cite{book_knotted_surfaces}. More precisely, if $D^2\times I$ is an embedded 1--handle for $F$, so that $F\cap (D^2\times I)=D^2\times \{0,1\}$, 1--handle addition of $F$ is the new surface $F'=F\setminus \left(D^2\times\{0,1\}\right) \cup \left(S^1\times I\right)$. We refer to the core of the 1--handle $\rho =\{0\}\times I$ as the \emph{guiding arc} of the 1--handle. The property of being unknotted is preserved by 1--handle addition. 

\begin{lemma}\cite{HK79b,Kam14,BS16}\label{lem:tubing_preserves_unknot}
Let $F\subset S^4$ be a {\bf connected} unknotted surface. If $F'$ is obtained from $F$ by 1--handle addition, then $F'$ is also unknotted. 
\end{lemma}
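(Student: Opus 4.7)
The plan is to exploit the connectedness of $F$ to isotope the guiding arc $\rho$ of the $1$--handle into a standard position lying on $F$, after which the $1$--handle addition becomes manifestly the connected sum of $F$ with either a standard torus or a standard Klein bottle. Since such a connected sum of an unknotted surface with a standard piece is itself unknotted by definition, this would finish the argument.

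First, I would use the connectedness of $F$ to produce an arc $\alpha\subset F$ joining the two endpoints of $\rho$. The concatenation $\rho\cup\alpha$ is a loop $\gamma\subset S^4$, which bounds a singular disk because $\pi_1(S^4)=0$. The main task is then to promote this singular disk to an embedded disk $D$ whose interior is disjoint from $F$. Here I would invoke the unknotting hypothesis: decomposing the connected surface $F$ as a connected sum of a standard orientable unknotted surface with finitely many $\Pcal_\pm$ summands, the orientable part bounds a standard $3$--dimensional handlebody and the projective plane summands come with standard local models. Intersection circles and excess arcs of the singular disk with $F$ can then be cleaned up by compressions against the bounding handlebody and isotopies through the standard neighborhoods, using the freedom to slide $\alpha$ anywhere on the connected surface $F$.

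Once the embedded disk $D$ is produced, it supplies an ambient isotopy of $\rho$ rel endpoints into an arbitrarily small neighborhood of $\alpha\subset F$. Pushing the entire $1$--handle along this isotopy realizes $F'$ as the result of attaching a $1$--handle whose core sits in a collar of $F$. Depending on whether the framing is orientation preserving or reversing on $F$, the handle attachment is locally modeled on either the standard orientable $1$--handle, producing the connected sum of $F$ with a standardly embedded torus, or the standard non--orientable $1$--handle, producing the crosscap sum of $F$ with $\Pcal_+\mathbin{\#}\Pcal_-$. In both cases $F'$ falls within the class of unknotted surfaces as described in the background section.

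The main obstacle will be the intersection cleanup producing the embedded disk $D$. For a generic knotted $F$ the arc $\rho$ simply cannot be isotoped onto $F$, so the argument has to use unknottedness in an essential way; and even for unknotted $F$ the $\Pcal_\pm$ summands make the complement less tame than the complement of a standard sphere, which forces some care in the local models near those summands. Both hypotheses in the statement are used: connectedness supplies $\alpha$ and the freedom to drag the endpoints of $\rho$ across $F$, while unknottedness supplies the bounding handlebody and standard local pictures that let us remove the intersections of the singular disk with $F$.
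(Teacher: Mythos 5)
The paper cites this lemma to \cite{HK79b}, \cite{Kam14}, and \cite{BS16} and does not supply a proof, so there is no internal argument for direct comparison; what follows measures your proposal against the classical argument in those references (Hosokawa--Kawauchi, Boyle, Baykur--Sunukjian).

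Your overall strategy---pick an arc $\alpha\subset F$ joining the endpoints of the guiding arc $\rho$, isotope $\rho$ rel endpoints into a collar of $F$, and then read off $F'$ as $F$ with a standard torus or Klein-bottle summand---is exactly the right one and matches the classical line. The gap is in the middle step. Let $\alpha'$ denote a push-off of $\alpha$ disjoint from $F$; the loop $\gamma=\rho\cup\alpha'$ defines a class in $H_1(S^4\setminus F)\cong\mathbb{Z}$ (for $F$ connected, by Alexander duality), and this class is precisely the algebraic intersection number of any disk bounded by $\gamma$ with $F$. If it is nonzero, no amount of compressing a singular disk against a bounding handlebody for $F$, or sliding $\alpha$ around on $F$, will produce the embedded disk $D$ you want: push-offs of loops in $F$ are null-homologous in the complement, so sliding $\alpha$ does not change $[\gamma]$, and compressions cannot cancel an algebraically essential intersection. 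The tool you are missing is the \emph{tube slide} (Boyle's meridional move): one drags the tube of $F'$ around a small meridional disk of $F$, which is an ambient isotopy of $F'$ but moves the core $\rho$ through $F$ once, changing $[\gamma]$ by $\pm1$. One then homotopes $\rho$ to $\alpha'$ freely in $S^4$ (using only $\pi_1(S^4)=1$); each transverse crossing of the homotopy with $F$ is realized by a tube slide, and since $1$--dimensional objects in a $4$--manifold are codimension three, homotopy rel endpoints in the complement upgrades to isotopy. After that the handle sits in a collar of $F$, and its framing is classified by $\mathbb{Z}_2$, giving $F\#T^2$ or $F\#(\Pcal_+\#\Pcal_-)$ as you say.

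This also recalibrates where the two hypotheses enter: connectedness is what makes the tube-slide argument run (it gives $\alpha$, it lets you slide the attaching disks together, and it makes $H_1(S^4\setminus F)$ cyclic so that a single meridional move suffices at each crossing), and it yields the uniqueness of the $1$--handle for \emph{any} connected $F$, knotted or not. Unknottedness is used only in the last sentence, to conclude that $F\#T^2$ and $F\#(\Pcal_+\#\Pcal_-)$ are themselves unknotted, which is true by definition. Your proposal instead invokes unknottedness to clean up the singular disk, which both overcomplicates the argument (especially near the $\Pcal_\pm$ summands, where no bounding handlebody exists) and still does not reach the cases with $[\gamma]\neq 0$.
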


%%%%%
\subsection{Trivial tangles}
We will work with boundary parallel $(n-2)$--balls properly embedded in $B^n$. 
In dimension four, a collection of properly embedded disks $\Dcal\subset B^4$ is called \emph{trivial} if $\Dcal$ is isotopic into the boundary of the 4–ball while fixing $\partial \Dcal$. If $|\Dcal|=c$, then we may refer to $\Dcal$ as a \emph{$c$--patch trivial disk system.} The following result implies that $c$--trivial disk systems are unique up to isotopy. 

\begin{lemma}[\cite{Liv82}]\label{lem:disk_system_uniqueness}
Suppose that $\Dcal_1$ and $\Dcal_2$ are two collections of trivial disks embedded in the 4--ball. If $\partial \Dcal_1=\partial \Dcal_2$, then $\Dcal_1$ is isotopic (rel boundary) in the 4--ball to $\Dcal_2$.
\end{lemma}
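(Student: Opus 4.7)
The plan is to reduce the four-dimensional uniqueness statement to a three-dimensional one using the definition of triviality, and then resolve the three-dimensional problem by a standard innermost-disk argument. First I would apply the definition of a trivial disk system directly: each $\Dcal_i$ admits an ambient isotopy of $B^4$, rel $\partial\Dcal_i$, taking it into $\partial B^4=S^3$. Since both isotopies fix the common boundary $L:=\partial\Dcal_1=\partial\Dcal_2$, after performing them I may assume $\Dcal_1,\Dcal_2\subset S^3$. Note that $L$ is automatically an unlink, as $\Dcal_1$ already exhibits disjoint disks in $S^3$ bounded by $L$. It now suffices to produce an isotopy rel $L$ from $\Dcal_1$ to $\Dcal_2$ inside $S^3\subset B^4$.

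Next I would carry out an innermost-disk argument in $S^3$. After a small transverse perturbation of the interiors, $\Dcal_1\cap\Dcal_2=L\sqcup C$, where $C$ is a disjoint union of circles in the interiors of the disks. I proceed by induction on $|C|$. If $C=\emptyset$, then for each component $K$ of $L$ the two disks in $\Dcal_1$ and $\Dcal_2$ bounded by $K$ together form a 2-sphere in $S^3$; by the Schoenflies theorem each such sphere bounds a 3-ball, and processing these balls in an innermost-first order yields an isotopy rel $L$ from $\Dcal_1$ onto $\Dcal_2$. If $|C|>0$, pick a circle $c\in C$ innermost on $\Dcal_2$, bounding a subdisk $\delta_2\subset\Dcal_2$ with $\mathrm{int}(\delta_2)\cap\Dcal_1=\emptyset$; let $\delta_1\subset\Dcal_1$ be the subdisk with $\partial\delta_1=c$. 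The 2-sphere $\delta_1\cup\delta_2$ bounds a 3-ball $B\subset S^3$ by Schoenflies, and pushing $\delta_1$ through $B$ onto a small push-off of $\delta_2$ (on the side away from $\Dcal_2$) produces an isotopy of $\Dcal_1$ rel $L$ that strictly decreases $|C|$.

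Concatenating the isotopies from the first two steps gives the desired rel-$L$ isotopy of $\Dcal_1$ onto $\Dcal_2$ in $B^4$. The main obstacle will be verifying that the push-off step in the induction does not create new intersections with the remainder of $\Dcal_1\cup\Dcal_2$; this is handled precisely by the innermost choice of $c$ on $\Dcal_2$, which guarantees that $\mathrm{int}(\delta_2)$ is disjoint from all of $\Dcal_1$, so a sufficiently small push-off of $\delta_2$ is disjoint from both $\Dcal_1$ and $\Dcal_2$ in its interior. An alternative strategy would be to appeal to the uniqueness of a complete compressing disk system for the outer handlebody $S^3\setminus\nu(L)$ and then invoke the light-bulb trick across $S^3$, but the innermost-disk approach is more self-contained and keeps track of the isotopy explicitly.
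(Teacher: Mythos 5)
The paper does not prove this lemma; it cites Livingston for it. Your plan—push both disk systems into $\partial B^4 = S^3$ rel $L$ and then compare them three-dimensionally—is precisely the reduction that the citation implicitly relies on, and the three-dimensional statement (disjoint disk collections bounded by an unlink in $S^3$ are isotopic rel the unlink) is exactly what the cited reference establishes. So your route is the natural one, but you are proving the cited 3D fact from scratch rather than invoking it, and that is where the gaps are.

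Two points need repair. First, the sentence ``after performing them I may assume $\Dcal_1, \Dcal_2 \subset S^3$'' cannot be taken literally: the ambient isotopy of $B^4$ that carries $\Dcal_2$ into $S^3$ will in general drag $\Dcal_1$ back out. The correct phrasing is transitive: produce (possibly different) $\Dcal_1', \Dcal_2' \subset S^3$ with $\Dcal_i \simeq \Dcal_i'$ rel $L$ in $B^4$, prove $\Dcal_1' \simeq \Dcal_2'$ rel $L$ inside $S^3$, and concatenate. This is a wording fix, not a conceptual one.

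Second, and more substantively, the innermost-disk induction elides the step that actually requires work. The sphere $\delta_1 \cup \delta_2$ bounds two balls, and neither is canonical; whichever ball $B$ you choose, the isotopy of $\Dcal_1$ pushing $\delta_1$ across $B$ must be \emph{rel $L$}, so you need to control what of $L$ (and of $\Dcal_2$, since it is held fixed) sits inside $B$. You only verify that a small push-off of $\delta_2$ has interior disjoint from $\Dcal_1 \cup \Dcal_2$, which constrains the \emph{endpoint} of the isotopy but not the isotopy itself. Components of $L$ can in fact lie inside $B$. What saves the argument is the observation that $(\delta_1 \cup \delta_2) \cap (\Dcal_1 \cup \Dcal_2) = \delta_1 \cup \delta_2$: therefore any component $K_j$ of $L$ lying in $\mathrm{int}(B)$ has both of its bounding disks $D_{1,j}, D_{2,j}$ entirely contained in $\mathrm{int}(B)$ as well, and the push of $\delta_1$ across $B$ can be carried out in the complement of these interior configurations. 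The same issue reappears in the $|C| = 0$ case, where the Schoenflies balls for the spheres $D_{1,j} \cup D_{2,j}$ may contain other components of $L$ that must be held fixed; ``process innermost first'' is the right slogan, but by itself it does not explain why the innermost push is rel $L$. None of this is a wrong turn—the conclusion is standard and the details are routine—but as written the induction step does not go through without the additional bookkeeping that the cited reference supplies.
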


In dimension three, a \emph{trivial tangle} is a tuple $(B,T)$, or simply $T\subset B$, where $B$ is a 3--ball containing properly embedded arcs $T$ such that, fixing the endpoints of $T$, we may isotope $T$ into $\partial B$. We consider the endpoints of a tangle to be punctures of the 2--sphere $\Sigma=\partial B$. Given a trivial tangle $T\subset B$, one can find $|T|$ embedded disks in $B$ with boundary equal to the union of $T$ with arcs in $\Sigma$. Such a collection of disks is called a \emph{set of bridge disks} for $T$. For a link $L\subset S^3$, a decomposition $(S^3,L)=(B_1,T_1)\cup_{\Sigma} (B_2,T_2)$, where each pair $(B_i,T_i)$ is a trivial tangle, is called a \emph{bridge splitting}. If each tangle $T_i$ has $b$ strands, we sometimes call this decomposition a \emph{$b$--bridge splitting} of $L$. The symbol $ \T$ denotes the mirror image of a trivial tangle $T$. 

%%%%%%%%%%%%%%%%%%%%%%%%%%%%%%%%%%%%%%%%%%%%%%%
\subsection{Bands}\label{sec:bands}
Fix $T\subset B$ a properly embedded 1--manifold in a 3--ball. Classically, a band is an embedding of a rectangle that intersects a link on two opposite sides \cite{Miy86}. In this work, we will use a broader definition of bands (and band surgery) to consider rectangles that interact with the boundary of $B$ (see Figure \ref{fig:band_types}).
\begin{definition}\label{def:bands}
%More precisely, 
A \emph{band} $v$ is an embedding of a rectangle $v:I\times I\ra B$ with $(T\cup \Sigma)\cap Im(v)\subset v(I\times \{0,1\})$ and satisfying {\bf one} of the following conditions: 
\begin{enumerate}[label=(\alph*)]
\item $Im(v)\cap T=v(I\times \{0,1\})$ (this is the classical notion of band), 
\item $Im(v)\cap T=v(I\times \{0\})$ and $Im(v)\cap \Sigma=v(I\times \{1\})$, or 
\item $Im(v)\cap T=\emptyset$ and $Im(v)\cap \Sigma=v(I\times \{1\})$. 
\end{enumerate}
In each case, we define the \emph{band surgery of $T$ along $v$}, denoted by $T[v]$, to be a new tangle as in Figure \ref{fig:band_types}.
\end{definition}
Note that in cases (b) or (c), surgery along $v$ turns a $b$--stranded trivial tangle $T$ into a tangle with $(b+1)$ strands. To define band surgery in Section \ref{sec:band_surgery}, we will consider bands as in (b) and (c) where the cores are monotonic arcs (no critical point). Under these assumptions, $T[v]$ will be a trivial tangle. 
%%%%%
\begin{figure}[h]
\centering
\includegraphics[width=.4\textwidth]{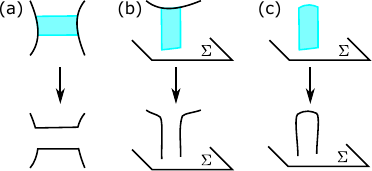}
\caption{(Top) Three kinds of bands $v$ with sides on a tangle $T$: (a) classical, (b) type 1, and (c) type 0. (Bottom) The band surgered tangle $T[v]$.}
\label{fig:band_types}
\end{figure}
%%%%%
\begin{remark}[Bands as framed arcs]\label{rem:bands_as_arcs}
In Section \ref{sec:band_surgery}, it will be useful to describe a band as in case (b) or (c) using a pair $(\alpha,f)$ where $\alpha$ is an embedded arc with framing $f$. In such cases, we will represent bands as framed arrows, where the arrow indicates the direction in which the tangle will be moved; see Figure~\ref{fig:exam_crosscap}. When the framing is not explicit, we will consider the blackboard framing as in Figure~\ref{fig:exam_tube_2}. 
\\
Suppose $T$ and $T'$ are trivial tangles with the same endpoints. If $(\alpha,f)$ represents a band for a properly embedded 1--manifold $L$, we denote by $L[\alpha,f]$ the result of $L$ after banded surgery. If $(\alpha,f)$ and $(\alpha',f')$ are framed arcs in $B$ and $B'$, respectively, with the same endpoint $*\in int(\Sigma)$, we denote by $f+f'$ the new framing on $\alpha\cup \alpha'$ provided the $f$ and $f'$ agree at $*$. This way, $(\alpha\cup \alpha',f+f')$ will represent a band for the link $T\cup \T'$ intersecting the bridge sphere in one arc. See Figure \ref{fig:framing_model} for an example of this situation. 
\end{remark}

%%%%%%%%%%%%%%%%%%%%%%%%%%%%%%%%%%%%%%%%%%%%%%%%%%%%%%%%%%%%%%%%%%%%%%
\subsubsection*{Dual bands}
We discuss further the classical notion of bands, i.e., as in (a) above. In this case, we restrict our attention to bands that are surface-framed.  That is, a band $v$ can be described as an embedded arc $\alpha$ and framing $f$ such that $\alpha$ is parallel to $\Sigma$ and $f$ is perpendicular everywhere to $\Sigma$. Suppose now that $T\subset B$ is a trivial tangle and $v$ is a collection of bands. We say that $v$ is a collection of \emph{dual bands} for the tangle $T$ if one can find a collection $D$ of bridge disks for $T$ satisfying the following conditions:
\begin{enumerate}
\item $v$ and $D$ have disjoint interiors and their union is a disjoint union of disks, and 
\item the cores of $v$ are parallel to arcs in $\Sigma$ and $v$ is surface framed. 
%
% \item the cores of $v$ are embedded arcs in $\Sigma$, 
% \item $v$ is transverse to $\Sigma$, $
% \item there is a collection $D$ of bridge disks of $T$ with interior disjoint from $v$, and
% \item every component of $D\cup v$ is contractible. 
\end{enumerate}
%%%%%
\begin{figure}[h]
\centering
\includegraphics[width=.4\textwidth]{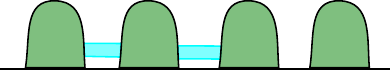}
\caption{An example of dual bands for a crossingless trivial tangle.}
\label{fig:dual_bands}
\end{figure}
%%%%%
Intuitively, after a diffeomorphism of the 3--ball, the bridge disks and bands look like Figure \ref{fig:dual_bands}. An equivalent definition of dual bands using shadow arcs instead of bridge disks can be found in \cite[Sec 3]{MZ}. Dual bands have the property that the surgered $b$--stranded tangle $T[v]$ is also trivial; see \cite[Lem 3.1]{MZ}. The following result is contained in the proof of Lemma 3.3 of \cite{MZ}.
\begin{lemma}[Lem 3.3 of \cite{MZ}]\label{lem:dual_bands}
% Dual bands exist provided $T_1\cup \T_2$ is an unlink.
Let $(B_1,T_1)$ and $(B_2,T_2)$ be two $b$--bridge trivial tangles with the same endpoints. If $T_1\cup \T_2$ is a \mbox{$c$--component} unlink in $S^3=B_1\cup \overline{B}_2$, then there exist a set $v$ of $(b-c)$ bands such that $v$ is dual to $T_1$, and $T_1[v]=T_2$. 
\end{lemma}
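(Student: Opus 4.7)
The plan is to push $T_1$ and $T_2$ (or equivalently their shadows) onto the bridge sphere $\Sigma$ and construct all $b-c$ bands simultaneously as thin rectangular neighborhoods of selected shadow arcs of $T_2$.

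First, fix full bridge disk systems $\Delta_1=\{D_1,\dots,D_b\}$ for $T_1\subset B$ and $\Delta_2=\{F_1,\dots,F_b\}$ for $T_2\subset B$. Let $\alpha_i=D_i\cap\Sigma$ and $\beta_j=F_j\cap\Sigma$ be the shadow arcs; after an isotopy of $\Sigma$ fixing the tangle endpoints, assume the two systems are in general position. The graph $G$ on $\Sigma$ with vertex set the $2b$ tangle endpoints and edges $\{\alpha_i\}\cup\{\beta_j\}$ is $2$-regular (each endpoint meets one $\alpha$ and one $\beta$), so it is a disjoint union of cycles alternating between $\alpha$- and $\beta$-edges. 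Pushing the $\alpha$'s slightly into $B$ recovers $T_1$, and pushing the $\beta$'s slightly into $B'=S^3\setminus B$ recovers $\T_2$; thus the components of $T_1\cup\T_2\subset S^3$ are in bijection with the cycles of $G$, and by hypothesis there are exactly $c$ cycles. A cycle of length $2k$ contains $k$ edges of each color.

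Next, in each cycle $C$ of length $2k$, pick any $k-1$ of its $\beta$-edges; for each chosen $\beta_j$, take a thin rectangular neighborhood $v_j$ of $\beta_j$ in $\Sigma$ whose two short sides lie on the two $\alpha$-arcs of $C$ adjacent to $\beta_j$. After a slight push-off into $B$, the collection $v=\bigsqcup v_j$ is a set of $\sum_i(k_i-1)=b-c$ pairwise disjoint classical bands for $T_1$, each surface framed with core parallel to $\Sigma$. These bands are dual to $T_1$: the union $\Delta_1\cup v$ is a disjoint union of $c$ disks (one per cycle of $G$), since each band merges two bridge disks along a shared $\alpha$-sub-arc, and $b$ bridge disks reduced by $b-c$ gluings leaves $c$ disks. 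Performing simultaneous band surgery along $v$ converts the $\alpha$-arcs of each cycle into its $\beta$-arcs: a direct trace around a cycle of length $2k$ shows that the new strands of $T_1[v]$ coincide with $\{\beta_j\}$ up to isotopy rel $\partial$, so $T_1[v]=T_2$.

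The main obstacle is arranging the interaction between shadow arcs so that the simultaneous surgery yields $T_2$ exactly: in general the $\alpha$'s and $\beta$'s cross in the interior of $\Sigma$, and the rectangular neighborhoods $v_j$ can meet $\alpha$-arcs at points other than the intended cycle-adjacencies. The resolution is to push each $v_j$ slightly off $\Sigma$ into $B$ so that its short sides remain on the intended $\alpha$-arcs while its interior avoids all other $\alpha$'s and $\beta$'s; this preserves both the surface framing and the parallel-to-$\Sigma$ condition. The verification of $T_1[v]=T_2$ then reduces to the combinatorial trace around a single cycle of $G$, performed independently for each of the $c$ cycles.
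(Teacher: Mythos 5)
Your setup (shadow arcs of $T_1$ and $T_2$ forming a $2$--regular graph whose cycles correspond to components of $T_1\cup\T_2$, and picking all but one $\beta$--edge per cycle as bands) is the right skeleton, and it is close to what Meier--Zupan do. But there is a genuine gap at the decisive step ``a direct trace around a cycle of length $2k$ shows that the new strands of $T_1[v]$ coincide with $\{\beta_j\}$ up to isotopy rel $\partial$.'' This requires the $\alpha$-- and $\beta$--arcs to be disjoint in $\Sigma$, not merely in general position. If the shadows cross, the zig-zag strand produced by surgery shadows $\alpha_1\beta_1\alpha_2\cdots\alpha_k$, and the isotopy that would slide it over to $\beta_k$ must pass through those crossings; it need not land on $T_2$. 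Your proposed resolution --- pushing the bands $v_j$ slightly off $\Sigma$ so that they avoid the other shadow arcs --- only guarantees that $v$ is honest dual bands; it does not touch the crossing problem for the strands themselves. Knowing that $T_1[v]$ is trivial with the same endpoints as $T_2$ is also not enough: distinct trivial tangles with identical endpoints abound (rational tangles), so you cannot conclude $T_1[v]=T_2$ from triviality alone.

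A symptom of the gap: your argument uses the hypothesis that $T_1\cup\T_2$ is an unlink only to count $c$ cycles, but the same cycle structure holds for any $c$--component link in bridge position. If the argument were valid as written, it would produce dual bands with $T_1[v]=T_2$ for any $c$--component link $T_1\cup\T_2$; since dual bands always yield $T_1\cup\overline{T_1[v]}$ an unlink, this would force every such link to be an unlink, which is false. The missing ingredient, used crucially in the proof of Lemma 3.3 of Meier--Zupan, is that the unlink hypothesis allows one to standardize the splitting (via the uniqueness of bridge splittings of unlinks) so that bridge disks for $T_1$ and $T_2$ can be chosen with \emph{disjoint} shadow arcs. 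Once the shadows are genuinely disjoint in $\Sigma$, your trace-around-a-cycle argument becomes correct.
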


%%%%%%%%%%%%%%%%%%%%%%%%%%%%%%%%%%%%%%%%%%%%%%%%%%%%%%%%%%%%%%%%%%%%%%
\section{Bridge multisections}\label{sec:multisections}
We discuss a 4-dimensional analog of bridge position for links in $S^3$. Bridge multisections can be defined for surfaces in arbitrary 4-manifolds (see \cite{islambouli2022toric} or \cite{pants}). We write a simplified version for $F\subset S^4$ as this paper only studies surfaces in $S^4$. The curious reader may find interesting the work of Islambouli and Naylor on multisections of closed 4--manifolds \cite{multi}.
\begin{definition}[\cite{islambouli2022toric}]\label{def:bridge_multisection}
Let $F$ be an embedded surface in $S^4$. A $(b;c)$--bridge multisection of $F$, where $c=(c_i)_{i=1}^n$, is a decomposition 
$$(S^4,F)=(X_1,\Dcal_1)\cup \dots \cup (X_n,\Dcal_n),$$
% \bigcup_{i=1}^n \left(X_i,\Dcal_i)$ 
such that for each $i\in \mathbb{Z}_{n}$, 
\begin{enumerate}
\item $\Dcal_i=F\cap X_i$ is a $c_i$--patch trivial disk system in a 4-ball $X_i$, 
\item $(B_i,T_i)=(X_i,\Dcal_i)\cap (X_{i-1},\Dcal_{i-1})$ is a $b$--bridge trivial tangle inside a 3--ball, and 
\item $(\Sigma,\{p_1, \dots, p_{2b}\})=\bigcap_{i=1}^n (X_i, \Dcal_i)$ is a 2--sphere with $2b$--punctures. % corresponding to the endpoints of the tangles $T_i$. 
\end{enumerate}
\end{definition}

The decomposition $S^4=\bigcup_{i=1}^n X_i$ is called the \emph{0--multisection} of $S^4$. In words, a $(b;c)$--bridge multisection, also referred to as \emph{$b$--bridge $n$--section}, is a decomposition of the surface $F$ into trivial disk systems intersecting each 3--ball $B_i=X_i\cap X_{i-1}$ in $b$--bridge trivial tangles $T_i$. The surface $\Sigma$ is called the \emph{central surface} or \emph{bridge surface} of $F$ and it is often depicted as a $2b$--punctured sphere. The numbers $c_1, \dots, c_n$ are called the \emph{patch numbers}, and $b=|\Sigma\cap F|/2$ is called the \emph{bridge number} of the decomposition. When all patch numbers are equal, say to $c_1$, we will write $c$ as a number. For instance $(4;2)$--bridge multisections are $\left(4;(2,\dots,2)\right)$--bridge multisections. A $(b;c)$--bridge multisection is \emph{thin} if $c_i=b-1$ for all $i\in \mathbb{Z}_n$. Theorem \ref{thm:thin_multi} states that every surface admits a thin multisection. 

% \begin{proposition}[Thm 1.3 of \cite{MZ}]
% Every surface in $S^4$ admits a bridge trisection. 
% \end{proposition}

The union of the tangles $\bigcup_{i=1}^n (B_i,T_i)$ is called the \emph{spine} of the bridge multisection. Abstractly, the spine is a $n$--valent graph with $2b$ vertices. It follows from Definition \ref{def:bridge_multisection} that the union of consecutive tangles $L_i=T_i\cup\T_{i+1}$ is a $c_i$-component unlink in $S^3$. By Lemma \ref{lem:disk_system_uniqueness}, there is a unique way to fill the unlink with a trivial disk tangle in the 4--ball. Thus, after fixing the embeddings of the 3--balls $\{B_i\}_{i=1}^n$ in $S^4$, the tuple of tangles $(T_1,\dots, T_n)$ determines a unique embedding of $F$ up to isotopy. %determines the pair $(S^4,F)$ up to diffeomorphism. 
%In practice, we encode the spine of a bridge multisection by listing its tangles. 
%
\begin{definition}\label{def:multiplane_diagram}
A \emph{multiplane diagram}, also called \emph{$n$--plane diagram}, is an ordered tuple of trivial tangles with the same endpoints $\Tcal=(T_1,\dots, T_n)$ such that, for each $i\in \mathbb{Z}_n$, $T_i\cup \T_{i+1}$ is an unlink in $S^3$. 
\end{definition}
We often abuse notation and use $\Tcal$ to denote a multiplane diagram and a bridge multisection simultaneously. In this case, the bridge multisected surface represented by a multiplane diagram $\Tcal$ is denoted by $F_{\Tcal}$. 

An $n$--plane diagram with $n=3$ is called a \emph{triplane diagram} \cite{MZ}. They can be used to describe classical invariants of the knotted surface, such as the Euler number, the fundamental group of the complement, and the peripheral subgroup~\cite{JMMZ_classic}. One could study how these procedures extend to arbitrary multiplane diagrams. For instance, the natural extension of the formula for the Euler number of a triplane diagram also holds for $n$--planes; the proof is the same as in \cite{JMMZ_classic}. 

\begin{lemma}[Cor 3.8 of \cite{JMMZ_classic}]\label{lem:euler} 
Let $\Tcal=(T_1,\dots, T_n)$ be an $n$--plane diagram of a surface $F\subset S^4$. Let $w_i$ be the writhe of the diagram $T_i\cup\T_{i+1}$, where $i\in \Z_n$. Then the Euler number of $F$ is given by $e(F)=\sum_{i=1}^n w_i$. 
\end{lemma}

Two bridge multisections $(S^4,F)=\bigcup_{i=1}^n(X_i,\Dcal_i)$ and $(S^4,F')=\bigcup_{i=1}^n(X'_i,\Dcal'_i)$ are said to be \emph{diffeomorphic} if there is a diffeomorphism $\varphi:(S^4,F)\ra (S^4,F')$ satisfying $\varphi(X_i,\Dcal_i)=(X_{\sigma (i)},\Dcal_{\sigma (i)})$ for all $i\in \mathbb{Z}_n$, for some permutation $\sigma$. If $\varphi$ is isotopic to the identity on $S^4$, we say that the bridge multisections are \emph{isotopic}. One way to obstruct multiplane diagrams from giving diffeomorphic bridge multisections is by looking at the links obtained by gluing two non-adjacent tangles. This approach was exploited in \cite[Thm 2.5]{islambouli2022toric} to build non-standard 4-sections of the unknotted sphere. %for multisections of smooth 4-manifolds with at least four sectors. 
Given a multiplane diagram $\Tcal$, a \emph{cross-section} of $\Tcal$ is a link in $S^3$ isotopic to $T_i\cup \T_j$ for some $i,j\in \mathbb{Z}_n$. The proof of Lemma~\ref{lem:crosssections_diffeo} is left to the reader. 

\begin{lemma}\label{lem:crosssections_diffeo}
Let $\Tcal$ and $\Tcal'$ be two multiplane diagrams. Assume that the induced bridge multisections of $F_{\Tcal}$ and $F_{\Tcal'}$ are diffeomorphic. Then, after possibly permuting the tangles in $\Tcal'$, the $ij$--cross-sections of $\Tcal$ and $\Tcal'$ differ by a diffeomorphism of $S^3$.
\end{lemma}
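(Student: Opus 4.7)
The plan is to observe that the diffeomorphism $\varphi$ realizing the equivalence of multisections must preserve the cyclic adjacency structure of the 4-balls, forcing the index permutation to be dihedral; the desired cross-section diffeomorphism of $S^3$ then comes from restricting $\varphi$ to two bridge 3-balls and gluing them along the bridge sphere.

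First I would note that for the 0-multisection $S^4 = \bigcup_i X_i$, the intersection $X_i \cap X_j$ is a 3-ball precisely when $|i - j| \equiv 1 \pmod n$ (namely $B_i$ or $B_j$), and equals the central 2-sphere $\Sigma$ for all other $i \neq j$; this is immediate from conditions (2)--(3) of Definition \ref{def:bridge_multisection}. Since $\varphi(X_i \cap X_j) = X'_{\sigma(i)} \cap X'_{\sigma(j)}$ and the dimension of each intersection must be preserved, $\sigma$ preserves cyclic adjacency on $\Z_n$, and hence is a dihedral permutation. Reindexing the tangles of $\Tcal'$ by this dihedral permutation (which leaves $\Tcal'$ a multiplane diagram, since mirroring preserves unlinks), we may reduce to the case where $\sigma$ is either the identity (rotation) or the involution $k \mapsto 1-k$ (reflection).

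Next, I would restrict $\varphi$ to the bridge 3-balls. In the rotation case, $\varphi$ yields diffeomorphisms $B_i \to B'_i$ sending $T_i$ to $T'_i$, together with a compatible diffeomorphism of bridge spheres $\Sigma \to \Sigma'$. For any $i, j \in \Z_n$, the $ij$-cross-section $T_i \cup \T_j$ is the link in the 3-sphere obtained by gluing $B_i$ to a mirrored copy of $B_j$ along $\Sigma$ (the mirroring being what turns $T_j$ into $\T_j$). Since $\varphi|_{B_i}$ and $\varphi|_{B_j}$ agree on $\Sigma$, they glue to a diffeomorphism of these 3-spheres carrying the $ij$-cross-section of $\Tcal$ to that of $\Tcal'$. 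In the reflection case, $\varphi$ instead identifies $B_i$ with a mirrored copy of $B'_{1-i}$, so after the analogous gluing the induced diffeomorphism of $S^3$ is orientation-reversing; this still counts as a diffeomorphism of $S^3$, as the lemma allows.

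The main subtlety, rather than a substantive obstacle, is bookkeeping the mirror convention in the definition of $\T_j$ through the reflection case; once dihedrality is established in the first step, no further geometric input is needed.
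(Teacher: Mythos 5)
The paper states this lemma without a written proof, treating it as essentially immediate from the definition of diffeomorphic bridge multisections; your argument supplies the implicit justification and is correct. The key steps are exactly the natural ones: since a diffeomorphism between the two $0$--multisections must carry each 3--ball $B_i = X_i \cap X_{i-1}$ to a 3--ball $B'_{\tau(i)}$, the permutation $\sigma$ of Definition of diffeomorphic multisections must preserve cyclic adjacency of the sectors (hence be dihedral for $n\geq 4$, and trivially so for $n=3$); reindexing $\Tcal'$ by $\sigma^{-1}$ (which is an allowed permutation of tangles and leaves the tuple a multiplane diagram, since reversing the cyclic order only replaces the consecutive unlinks by their mirror images) reduces to the case where $\varphi$ carries $(B_i,T_i)$ to $(B'_i,T'_i)$; and then the restrictions $\varphi|_{B_i}$ and $\varphi|_{B_j}$ agree along $\Sigma$, so they glue to a diffeomorphism of $B_i\cup_\Sigma \overline B_j \to B'_i\cup_{\Sigma'}\overline{B'_j}$ taking $T_i\cup\T_j$ to $T'_i\cup\T'_j$, possibly orientation-reversing in the reflection case, which the lemma permits.

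Two small points worth flagging. First, the fact that $X_i\cap X_j=\Sigma$ for all non-adjacent $i,j$ is part of the standard structure of a $0$--multisection of $S^4$, but it is not literally listed among conditions (1)--(3) of Definition \ref{def:bridge_multisection}; you should cite it as a structural property of multisections rather than as ``immediate from conditions (2)--(3).'' Second, there is a harmless indexing slip: with $\sigma(k)=1-k$ one has $\varphi(B_i)=X'_{1-i}\cap X'_{2-i}=B'_{2-i}$ rather than $B'_{1-i}$, but this does not affect the argument, as any reflection of $\mathbb{Z}_n$ can be used after relabeling.
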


We end this section by surveying known constructions of bridge multisections. 

%%%%%%%%%%%%%%%%%%%%%%%%%%%%%%%%%%%%%%%%%%%%%%%%%%
\subsection{2--bridge multisections}\label{sec:2-bridge}
Only the unknotted 2--sphere admits a one--bridge multisection. In this section, we aim to determine all two--bridge multisected surfaces in $S^4$ (Thm \ref{thm:class_2-bridge}). This argument follows the same outline as the work in \cite[Sec 5.3]{islambouli2022toric}. We include a brief commentary for completeness. 

Trivial tangles with two bridges are often called rational tangles as they are in correspondence with the extended rational numbers $\mathbb{Q}\cup\{\frac{1}{0}\}$ \cite{kauffman_lambropoulou}. For a pair of extended rational numbers, let $i\left(\frac{p_1}{q_1},\frac{p_2}{q_2}\right)$ be equal to $|p_1q_2-p_2q_1|$. The following result is a consequence of Conway's classification of 2--bridge links.

\begin{lemma}[\cite{conway_enumeration}]\label{lem:rational_tangles}
Let $T_1$ and $T_2$ be two rational tangles with associated fractions $\frac{p_1}{q_1}$ and $\frac{p_2}{q_2}$, respectively. 
\begin{enumerate}
\item $T_1\cup \T_2$ is a 2--component unlink if and only if $\frac{p_1}{q_1}=\frac{p_2}{q_2}$. 
\item $T_1\cup \T_2$ is an unknot if and only if $i\left(\frac{p_1}{q_1},\frac{p_2}{q_2}\right)=1$.%|p_1q_2-p_2q_1|=1$. 
\end{enumerate}
\end{lemma}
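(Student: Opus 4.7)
The plan is to deduce both items from Conway's correspondence between rational tangles and elements of $\mathbb{Q}\cup\{1/0\}$, combined with the classification of 2--bridge links via their double branched covers. The key input I would use is that the double branched cover of $(B,T)$ over a rational tangle $T$ with fraction $p/q$ is a solid torus whose meridional disk has boundary slope $p/q$ on the branched cover of the 4--punctured bridge sphere (which is a torus), and that rational tangles with equal fractions are isotopic rel $\partial B$.

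For item (1), I would first handle the easy direction: if $p_1/q_1=p_2/q_2$, then $T_1$ and $T_2$ are isotopic rel boundary, so $T_1\cup \T_2$ is isotopic to $T_2\cup \T_2$, the double of $T_2$ along its mirror. For the trivial $0$--fraction tangle this double is visibly a 2--component unlink, and isotopy invariance then transports the conclusion to arbitrary rational $T_2$. For the converse I would rely on item (2): the link $T_1\cup \T_2$ has two components precisely when $p_1q_2-p_2q_1=0$, and among 2--bridge links the only one with two components is the unlink.

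For item (2), I would pass to the double branched cover of $S^3$ over $T_1\cup \T_2$. Because each $(B_i,T_i)$ is rational, the branched cover inherits a genus--1 Heegaard splitting $H_1\cup H_2$ of a closed 3--manifold $M$, where each $H_i$ is a solid torus with meridian slope $p_i/q_i$ on the common torus. A standard computation with the Heegaard gluing matrix gives $|H_1(M;\Z)|=|p_1q_2-p_2q_1|$, so $M$ is the lens space $L(|p_1q_2-p_2q_1|,\ast)$ (or $S^2\times S^1$ when the determinant vanishes). Since 2--bridge links are classified by their branched double covers, and since the unknot is characterized by branched cover $S^3$ (equivalently, trivial first homology), item (2) follows. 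This simultaneously verifies the remaining direction of (1), since $p_1q_2-p_2q_1=0$ is exactly the case when the cover is $S^2\times S^1$, corresponding to the 2--component unlink.

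The main obstacle I anticipate is aligning the mirror and slope conventions so that $\T_2$ corresponds to $p_2/q_2$ on the shared bridge sphere rather than to $-p_2/q_2$ or its reciprocal; once these conventions are fixed, both statements reduce to the standard classifications of lens spaces and of 2--bridge links by their branched double covers.
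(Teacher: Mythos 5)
The paper states this lemma as a citation to Conway's classification of 2--bridge links and offers no proof of its own, so there is no in-paper argument to compare against; the expected approach is exactly the one you take, reducing to the classification of lens spaces via double branched covers. Your argument is essentially sound: passing to the double branched cover of the numerator closure $T_1\cup\T_2$ yields a genus-one Heegaard splitting with gluing determinant $|p_1q_2-p_2q_1|$, so the cover is $L(|p_1q_2-p_2q_1|,\ast)$ (or $S^2\times S^1$ when the determinant vanishes), and the classification of 2--bridge links by their branched double covers then gives item (2) directly, and item (1) in the converse direction since $S^2\times S^1$ is realized only by the 2--component unlink.

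One intermediate assertion, however, is false and should be deleted or repaired: the claims that ``$T_1\cup\T_2$ has two components precisely when $p_1q_2-p_2q_1=0$'' and that ``among 2--bridge links the only one with two components is the unlink.'' A 2--bridge link $b(p,q)$ has two components whenever $p$ is \emph{even}, not only when $p=0$; the Hopf link $b(2,1)$ already falsifies both statements. Fortunately, your branched-cover argument at the end of the proposal does not rely on either claim, and it correctly recovers the converse of item (1): if $T_1\cup\T_2$ is a 2--component unlink then its cover is $S^2\times S^1$, forcing $p_1q_2-p_2q_1=0$ and hence, with reduced fractions, $\tfrac{p_1}{q_1}=\tfrac{p_2}{q_2}$. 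Also worth making explicit is that the 2--bridge-link classification you invoke is the Schubert/Hodgson--Rubinstein result (two 2--bridge links are equivalent iff their double branched covers are homeomorphic), which is what lets you go from lens-space data back to the link type.
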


By Lemma \ref{lem:rational_tangles}, a 2--bridge multiplane diagram $\Tcal$ can be represented by a finite sequence of extended rational numbers $(\alpha_1,\dots, \alpha_n)$ where $i(\alpha_i,\alpha_{i+1})=1$ or $\alpha_i=\alpha_{i+1}$ for each $i=1,\dots, n$. Sequences of this form are loops (with possibly constant edges) in the Farey graph (see \cite{Hatcher_topology_numbers} for more details).

\begin{lemma}[Prop 5.10 of \cite{islambouli2022toric}]\label{prop:Farey_triangles}
Every loop in the Farey graph with $n\geq 3$ is conjugate to a loop $\alpha=(\alpha_1, \dots, \alpha_n)$ such that $(\alpha_1,\alpha_2,\alpha_3)$ has one of the following forms: 
$\left(\frac{0}{1},\frac{1}{0},\frac{1}{1}\right)$, $\left(\frac{0}{1},\frac{1}{0},\frac{-1}{1}\right)$, or $\left(\frac{0}{1},\frac{1}{0},\frac{0}{1}\right)$.
\end{lemma}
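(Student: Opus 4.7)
The plan is to combine a normalization via the $PSL(2,\Z)$-action on the Farey graph with a combinatorial ``ear'' argument for loops in the Farey complex. Recall that $PSL(2,\Z)$ is the orientation-preserving isometry group of the Farey tessellation and acts simply transitively on oriented edges of the Farey graph. Consequently, oriented Farey triangles fall into exactly two orbits under this action: those with the cyclic order of $(0/1, 1/0, 1/1)$ (``positively oriented'') and those with the cyclic order of $(0/1, 1/0, -1/1)$ (``negatively oriented'').

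The key claim is that every non-degenerate loop $(\alpha_1,\dots,\alpha_n)$ of length $n\geq 3$ in the Farey graph admits a cyclic index $i$ at which the triple $(\alpha_i, \alpha_{i+1}, \alpha_{i+2})$ is either a \emph{backtrack}, meaning $\alpha_i=\alpha_{i+2}\neq \alpha_{i+1}$, or a \emph{Farey triangle} of three pairwise Farey-adjacent vertices. To prove this I use that the Farey complex, obtained by attaching a $2$-cell along each Farey triangle, is contractible; hence the loop bounds a triangulated disk. Choosing a disk of minimal area and applying the standard ear lemma for triangulated polygons produces a triangle sharing two consecutive edges with the boundary loop, which gives a Farey triangle triple. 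Degenerate disks (those with empty interior) correspond to loops that retrace edges, producing backtracks.

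After cyclically rotating the loop so that the good triple occupies positions $1$ through $3$, I apply the unique element of $PSL(2,\Z)$ that sends the oriented edge $(\alpha_1, \alpha_2)$ to $(0/1, 1/0)$. In the backtrack case, this normalization directly yields the form $(0/1, 1/0, 0/1)$. In the triangle case, the same element sends $\alpha_3$ to either $1/1$ or $-1/1$, depending on the orientation of the triangle, and the conclusion follows.

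The main obstacle is verifying the ear claim: one needs to show that every loop in the Farey graph either contains a backtrack or bounds a triangulated disk with a boundary-adjacent triangle. The topological intuition is immediate from the contractibility of the Farey complex, but some care is required for loops containing constant edges ($\alpha_i=\alpha_{i+1}$) or revisiting vertices, where the minimal disk may be degenerate and the direct ear lemma must be replaced by an inductive reduction. Once the combinatorial claim is in place, the remainder of the proof is clean bookkeeping via $PSL(2,\Z)$-transitivity.
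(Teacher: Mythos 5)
The paper does not give its own proof of this lemma; it is imported verbatim from \cite{islambouli2022toric} (Proposition~5.10), so there is no in-paper argument to compare your work against. Evaluated on its own terms, your proposal has the right overall shape. The $PSL(2,\Z)$ bookkeeping is correct: the action on oriented Farey edges is simply transitive, the only vertices completing a Farey triangle on $\{0/1,1/0\}$ are $1/1$ and $-1/1$, and the unique element carrying $(\alpha_1,\alpha_2)$ to $(0/1,1/0)$ sends a backtrack $\alpha_3=\alpha_1$ to $0/1$. So the whole content really is the existence of a ``good triple'' somewhere in the loop, and contractibility of the Farey $2$--complex is the correct source of that fact.

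Two points need more care than your sketch provides. First, the statement is false as literally read for constant loops $(\gamma,\gamma,\dots,\gamma)$, which contain no triple of the required form; the lemma (and its use in the proof of Theorem~\ref{thm:class_2-bridge}, which first assumes no two consecutive tangles are equal) tacitly requires at least one non-constant edge. You should make this hypothesis explicit and dispose of stray constant edges $\alpha_i=\alpha_{i+1}$ before the ear step, since after normalization you need $\alpha_1\neq\alpha_2$. Second, the textbook ear lemma for triangulated polygons is stated for simple polygons, but a Farey loop may revisit vertices and edges, so the minimal filling is in general a \emph{singular} triangulated disk. To close that gap you either need the reduced van Kampen diagram formalism in place of the polygon ear lemma, or--cleaner in this setting--you can pass to the trivalent tree dual to the Farey tessellation: the loop induces a closed walk in that tree, a closed walk in a tree must backtrack, and the innermost such backtrack corresponds to either a backtrack or a Farey-triangle triple in the original loop. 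As written, the ear step is a plausible sketch rather than a proof, but the strategy itself is sound.
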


The triplets of fractions in Lemma \ref{prop:Farey_triangles} correspond to the three triplane diagrams with bridge number two representing connected surfaces (see \cite[Sec 4.3]{MZ}).

\begin{theorem}\label{thm:class_2-bridge}
Suppose that $\Tcal$ is a 2--bridge multiplane diagram. Then $F_\Tcal$ is either a 2--component unlink of 2--spheres or a connected unknotted surface. 
\end{theorem}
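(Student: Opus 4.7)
The plan is to analyse $\Tcal$ via its associated loop $(\alpha_1,\dots,\alpha_n)$ in the Farey graph given by Lemma~\ref{lem:rational_tangles}, where $c_i = 2$ when $\alpha_i = \alpha_{i+1}$ and $c_i = 1$ when $i(\alpha_i,\alpha_{i+1}) = 1$. A direct spine count yields
\[
\chi(F_\Tcal) \;=\; 2b - nb + \sum_{i=1}^n c_i \;=\; 4 - k,
\]
where $k$ is the number of indices with $\alpha_i\neq \alpha_{i+1}$. If $k=0$, then all $c_i = 2$ and the disk systems glue immediately into the two-component unlink of unknotted 2-spheres. If instead $k\geq 1$, then at any sector $i$ with $c_i=1$ the unique patch is a disk whose boundary visits all four punctures of $\Sigma$, which forces the underlying spine graph to be connected; hence $F_\Tcal$ is connected. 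It remains to show that this connected surface is unknotted.

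I would induct on $n$. The base case $n=3$ is exactly Lemma~\ref{prop:Farey_triangles}: after conjugation the three fractions form one of the triplanes identified in \cite[Sec.~4.3]{MZ} as the standard unknotted $S^2$, $\Pcal_+$, or $\Pcal_-$. For the inductive step $n\geq 4$, the loop bounds a region of the Farey tessellation tiled by Farey triangles, and this triangulated polygon must possess at least two ``ears,'' that is, triples $\alpha_{i-1},\alpha_i,\alpha_{i+1}$ whose convex hull is a single triangle (possibly degenerate). Using such an ear I would shorten the loop by one of three reductions: (a) when $\alpha_i = \alpha_{i+1}$, delete the redundant vertex, which is a multiplane destabilization and leaves $F_\Tcal$ unchanged; (b) when $\alpha_{i-1} = \alpha_{i+1}\neq \alpha_i$, delete the tip $\alpha_i$ to obtain a surface $F'$ such that $F_\Tcal$ is recovered from $F'$ by a standard 1-handle addition ($\chi$ drops by $2$); (c) when $\alpha_{i-1},\alpha_i,\alpha_{i+1}$ are distinct and form a genuine Farey triangle, collapse the ear to the edge $\alpha_{i-1}\alpha_{i+1}$ to obtain $F'$ with $F_\Tcal = F'\#\Pcal_{\pm}$ ($\chi$ drops by $1$). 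By induction $F'$ is either the 2-component unlink or a connected unknotted surface, and in each case Lemma~\ref{lem:tubing_preserves_unknot} together with the definition of an unknotted surface as a distant sum of unknotted orientable surfaces and standard projective planes shows that $F_\Tcal$ is unknotted as well.

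The main obstacle will be case (c): rigorously identifying the triangle-collapse with the removal of a standard $\Pcal_\pm$ summand. I expect to handle this by modelling the local configuration near the ear on the Meier-Zupan $\Pcal_\pm$ triplane and exhibiting an explicit local isotopy, together with a band-surgery identification via Lemma~\ref{lem:dual_bands}, realising the surface change as a crosscap sum. A secondary bookkeeping point concerns the borderline situation when case (b) reduces all the way to $F' = S^2\sqcup S^2$ (which occurs precisely when $k=2$); there $F_\Tcal$ is the 1-handle tubing of the two-component unlink with $\chi=2$, i.e.\ the unknotted $S^2$, so the theorem still holds.
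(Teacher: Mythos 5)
Your overall architecture---induct on the number of sectors, reduce by recognizing a local configuration in the Farey loop, and split into the cases where consecutive fractions agree, where $\alpha_{i-1}=\alpha_{i+1}$, and where three distinct fractions span a Farey triangle---is essentially the same as the paper's. Cases (a) and (b), together with the bookkeeping about the disconnected outcome of (b), match the paper's analysis of the $\alpha_3=0/1$ subcase (inserting $T_2$ into $\Tcal'$ is a 1--handle addition, then invoke Lemma~\ref{lem:tubing_preserves_unknot}, being careful that the connectedness hypothesis holds).

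Two points in your write-up are genuine shortcuts that need to be filled in. First, you justify the existence of a usable local configuration by asserting the loop ``bounds a region of the Farey tessellation tiled by Farey triangles'' and invoking a two-ears statement; Farey loops need not be simple polygons (they may revisit vertices and traverse edges multiple times), so the two-ears theorem does not apply as stated. The paper avoids this by citing Proposition 5.10 of \cite{islambouli2022toric} (quoted as Lemma~\ref{prop:Farey_triangles}), which already says that after a cyclic permutation a length-$\geq 3$ loop begins with one of three standard triples; you should cite that rather than re-derive it via polygon combinatorics.

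Second, and more seriously, your case (c) is the real gap and you flag it yourself. You propose to model the ear on the Meier--Zupan $\Pcal_\pm$ triplane and verify a crosscap sum by an explicit band-surgery computation, but you do not carry this out, and it is not straightforward: a priori the $\Pcal_\pm$ summand could interact nontrivially with the rest of the diagram. The paper's route is both simpler and decisive here: when $\alpha_3=\pm 1$ the link $T_3\cup \T_1$ is an unknot by Lemma~\ref{lem:rational_tangles}, so the 3--sphere $B_3\cup\overline{B}_1$ meets $F_\Tcal$ in an unknotted circle and is therefore a connect-summing sphere. This immediately gives $F_\Tcal=F_{\Tcal_0}\#F_{\Tcal'}$ with $\Tcal_0=(T_1,T_2,T_3)$ and $\Tcal'=(T_1,T_3,\dots,T_n)$, which are both handled by induction; no local isotopy model or Lemma~\ref{lem:dual_bands} argument is needed. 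You should replace your proposed crosscap-identification with this connect-summing-sphere observation to close the gap.
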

\begin{proof}
We will proceed by induction in the number of sectors of $\Tcal$. 
By \cite[Sec 4.3]{MZ}, the result holds for $n$--sections with $n= 3$. Assume that the result holds for some $n\geq 3$ and let $\Tcal$ be a 2--bridge $(n+1)$--section. If $F_\Tcal$ is disconnected, then all the tangles of $\Tcal$ must be the same and $F$ is a 2--component unlink of 2--spheres. We assume that $F_\Tcal$ is connected and the $\Tcal$ is not a constant sequence of tangles. If $\Tcal$ has a pair of consecutive tangles with $T_i=T_{i+1}$, then the disks $\Dcal_i$ between them are isotopic to product disks. Thus, erasing $T_i$ from $\Tcal$ would not affect the underlying surface. By the inductive hypothesis, the shorter tuple represents an unknotted surface and so $F_\Tcal$ is unknotted. 

Assume that no consecutive tangles of $\Tcal$ are equal. % and represent $\Tcal$ using an $n$--tuple of extended rational numbers. 
By Lemma \ref{prop:Farey_triangles}, after a cyclic permutation of $\Tcal$, we can choose a tuple of extended rational numbers representing $\Tcal$ such that the first two numbers are $\alpha_1=0/1$ and $\alpha_2=1/0$ and $\alpha_3$ is $\pm 1$ or $0/1$. In these cases, Lemma~\ref{lem:rational_tangles} implies that $T_3\cup \T_1$ is an unlink and the tuple $\Tcal_0=(T_1,T_2,T_3)$ is a triplane diagram. By the same reason, removing $T_2$ from $\Tcal$ still yields an $n$--plane diagram $\Tcal'=(T_1, T_3, T_4, \dots, T_{n+1})$. By the inductive hypothesis, $\Tcal_0$ and $\Tcal'$ represent unknotted surfaces in \mbox{$S^4$.} If $\alpha_3$ is $\pm 1$, $T_3\cup \T_1$ is an unknot and the 3--sphere $B_3\cup \overline{B}_1$ is a connect summing sphere for $F_\Tcal$ with connect summands the surfaces $F_{\Tcal_0}$ and $F_{\Tcal'}$. Hence, $F_\Tcal=F_{\Tcal_0}\#F_{\Tcal'}$ is also unknotted. 
If $\alpha_3$ is $0/1$, the link $T_3\cup \T_1$ is a 2--component unlink and the triplane diagram $\Tcal_0=(T_1,T_2,T_3)$ represents an unknotted 2--sphere. This way, the action of inserting $T_2$ into $\Tcal'$ corresponds to replacing two disks of $F_{\Tcal'}$ with one annulus. In words, $F_{\Tcal}$ is obtained by adding a 1--handle to $F_{\Tcal'}$. As consecutive tangles of $\Tcal$ are distinct, $\Tcal'$ is not a constant sequence of tangles, and so $F_{\Tcal'}$ is a connected unknotted surface. On the other hand, Lemma \ref{lem:tubing_preserves_unknot} states that adding a 1--handle to a connected unknotted surface yields an unknotted surface. Hence $F_\Tcal$ is also unknotted. 
\end{proof}

%%%%%%%%%%%%%%%%%%%%%%%%%%%%%%%%%%%%%%%%%%%%%%%%%%
\subsection{Crossingless multisections}
Besides using the bridge number, one can filter multiplane diagrams by crossing number. %use the crossing numbers of the tangles in a multiplane diagram to tabulate knotted surfaces. 
Meier and Zupan showed that triplane diagrams with no crossings are those of unknotted surfaces \cite[Prop 4.4]{MZ}. The authors of \cite{Zupan_simple} continued this program by estimating the crossing numbers of the surfaces from Yoshikawa's table \cite{Yoshi}. They also showed that the crossing number of the unknotted non-orientable surface $\mathcal{P}^{m,n}$ is equal to \mbox{$\max\{1,|m-n|\}$.} For arbitrary multiplane diagrams, only the zero-crossing case has been dealt with. 
\begin{proposition}[Prop 4.4 of \cite{MZ}]\label{prop:zero_crosing}
An orientable knotted surface is unknotted if and only if it admits a multiplane diagram with no crossings. 
\end{proposition}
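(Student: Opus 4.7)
The plan is to prove the two implications separately.

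For the backward direction, suppose $\Tcal = (T_1, \dots, T_n)$ is a multiplane diagram with no crossings. The key observation is that each crossingless tangle $T_i \subset B_i$ is planar and hence isotopic (rel boundary) into the bridge sphere $\Sigma$. Performing this isotopy simultaneously for every index places the whole spine of $\Tcal$ on $\Sigma$, so each consecutive union $T_i \cup \overline{T}_{i+1}$ becomes a disjoint collection of embedded simple closed curves on the 2-sphere $\Sigma$, bounding a canonical system $E_i \subset \Sigma$ of $c_i$ disks. Lemma \ref{lem:disk_system_uniqueness} then lets me replace $\Dcal_i$ by a small pushoff of $E_i$ into the interior of the 4-ball $X_i$. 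Arranging these pushoffs coherently in the $D^2$-factor normal to $\Sigma$ embeds $F_\Tcal$ into a tubular neighborhood of $\Sigma$ in $S^4$, and a radial rearrangement of the sectors places $F_\Tcal$ inside a single standard $S^3 \subset S^4$ containing $\Sigma$ as an equator. Since every closed orientable surface in $S^3$ is unknotted, $F_\Tcal$ is unknotted (and automatically orientable).

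For the forward direction, suppose $F$ is orientable and unknotted. I construct an explicit crossingless multiplane diagram for each connected component of $F$ and then combine them. If a component has genus $g \geq 1$, take $\Tcal = (T_1, \dots, T_{2g+2})$ with $T_i$ alternating between the rational tangles $\frac{0}{1}$ and $\frac{1}{0}$, both visibly crossingless. By Lemma \ref{lem:rational_tangles}, each cross-section $T_i \cup \overline{T}_{i+1}$ is an unknot, so every $c_i = 1$ and a direct Euler characteristic computation gives $\chi(F_\Tcal) = 2b - nb + \sum c_i = 2 - 2g$. Since the tuple is non-constant, Theorem \ref{thm:class_2-bridge} forces $F_\Tcal$ to be a connected unknotted surface; combined with the backward direction, it is the standard orientable genus-$g$ surface. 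For a 2-sphere component ($g = 0$), I use instead the crossingless 3-section $\Tcal = (\frac{0}{1}, \frac{1}{0}, \frac{0}{1})$ with patch numbers $(1,1,2)$, giving $\chi = 2$. For disconnected $F$, I build crossingless diagrams for each connected piece, pad shorter ones by duplicating a tangle (this preserves both the underlying surface and the crossingless property), and merge them by placing the tangles of distinct components in disjoint regions of each $B_i$.

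The main obstacle is the backward direction: one needs to check that the sector-by-sector pushoffs of $E_i$ into $X_i$ can be arranged so the total surface lies in a single copy of $S^3 \subset S^4$. I would handle this with an explicit coordinate description of the multisection of $S^4$ near $\Sigma$, exhibiting an equatorial $S^3$ on which all the $E_i$ already live and into which every sector deformation-retracts compatibly with the disk pushoffs. An alternative route is an induction on $n$ reducing to the triplane case (Proposition 4.4 of \cite{MZ}), where one removes a single tangle from $\Tcal$ and shows that the surface changes by either a connect sum with an unknotted 2-sphere or a $1$-handle addition, mirroring the inductive argument used in the proof of Theorem \ref{thm:class_2-bridge}.
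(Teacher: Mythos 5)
The paper itself does not reprove this proposition; it is stated as a direct citation of Proposition~4.4 of \cite{MZ}. The triplane version proven there immediately gives the ``only if'' direction here (a crossingless triplane is a crossingless multiplane), and the ``if'' direction requires an argument adapted to an arbitrary number of sectors, which the paper leaves implicit. So you are really being asked to supply the missing argument, and your attempt should be assessed on its own merits.

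The central gap is in your backward direction. You assert that isotoping every crossingless tangle $T_i$ into $\Sigma$ simultaneously ``places the whole spine of $\Tcal$ on $\Sigma$, so each consecutive union $T_i\cup \overline{T}_{i+1}$ becomes a disjoint collection of embedded simple closed curves.'' This is not justified: each $T_i$ individually has an embedded shadow in $\Sigma$, but the shadows of \emph{different} tangles generically intersect, so the spine need not embed in $\Sigma$, and $T_i\cup\overline{T}_{i+1}$ need not project to a system of disjoint circles. One can make $\tau_i$ and $\tau_{i+1}$ disjoint for a single $i$ (since $T_i\cup\overline{T}_{i+1}$ is a crossingless plat of an unlink), but you need this \emph{simultaneously} for all $i$, and making $\tau_{i+1}$ disjoint from $\tau_i$ can force crossings with $\tau_{i+2}$. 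Without this, the disks $E_i$ and the coherent pushoff into a single $S^3$ are not available. Your own fallback remark --- induct on $n$, merge down to a triplane, and quote MZ's Proposition~4.4 --- is the sound route (and close to the spirit of Theorem~\ref{thm:class_2-bridge}), but you would need to actually exhibit a merge or split preserving crossinglessness, which you do not do.

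Your forward direction has a secondary issue. The Euler characteristic count $\chi = 2b - nb + \sum c_i = 2-2g$ is correct for your alternating $\bigl(\tfrac{0}{1},\tfrac{1}{0},\dots\bigr)$ example, and Theorem~\ref{thm:class_2-bridge} does give a connected unknotted surface; but $\chi=2-2g$ does not distinguish the orientable genus-$g$ surface from $\#^{2g}\mathbb{RP}^2$ (compare Figures~\ref{fig:quad_torus} and \ref{fig:quad_Klein}, which are $(2;1)$--bridge $4$--sections of a torus and a Klein bottle respectively). You patch this by invoking your backward direction to get orientability, which is circular given the gap above. The clean forward argument is simply to quote the triplane case: an unknotted orientable surface admits a crossingless triplane diagram by \cite[Prop.\,4.4]{MZ}, and every triplane diagram is a multiplane diagram. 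Your explicit $2$--bridge construction is a nice addition, but you should verify orientability directly (e.g.\ by checking the induced orientation on the patches, or observing the diagram is the standard stabilization of the genus-$g$ Heegaard surface of $S^3$) rather than deferring to the other implication.
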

%The proof of Proposition \ref{prop:zero_crosing} is the same as that of \cite[Prop 4.4]{MZ}. 
Proposition~\ref{prop:zero_crosing} was proven for triplane diagrams in \cite[Prop 4.4]{MZ}. That said, the same proof works for arbitrary $n$--plane diagrams. 
Below, we ask for a strengthening of this proposition. If the answer to Question \ref{thm:low_crosing} is positive, one may expect a proof using the technology in Section \ref{sec:perturbation_rev}. 
\begin{question}\label{thm:low_crosing}
Let $\Tcal$ be a multiplane diagram such that each tangle of $\Tcal$ has at most one crossing. Is the surface represented by $\Tcal$ an unlink of unknotted surfaces? 
\end{question}

%%%%%%%%%%%%%%%%%%%%%%%%%%%%%%%%%%%%%%%%%%%%%%%%%%%
\subsection{New from old}\label{sec:sums}
Given two surfaces $F_1$ and $F_2$, their \emph{distant sum} $F_1\sqcup F_2$ is defined by taking the connected sum of their ambient spaces using small 4--ball neighborhoods of points disjoint from $F_1$ and $F_2$. If one instead chooses neighborhoods of points in $F_1$ and $F_2$, the resulting surface in $S^4$ is $F_1\#F_2$ the \emph{connected sum} of $F_1$ and $F_2$. If the surfaces are equipped with bridge $n$--sections $\Tcal_1$ and $\Tcal_2$, then one can choose the connecting sum points between the ambient $S^4$s to lie in the central surfaces of $\Tcal_1$ and $\Tcal_2$. This will yield $(b_1+b_2)$--bridge $n$--sections for $F_1\sqcup F_2$ and $(b_1+b_2-1)$--bridge $n$--sections for $F_1\# F_2$. At the level of multiplane diagrams, this translates as taking the distant sum or the pointed-connected sum of the respective tangles. See \cite[Sec 2.2]{MZ} %\cite[Sec 4]{Blair_KT_inv} 
and \cite[Sec 2.3]{pants} for more details. 

%%%%%%%%%%%%%%%%%%%%%%%%%%%%%%%%%%%%%%%%%%%%%%%%%%%%%%%%
\subsection{Completely decomposable diagrams}
Triplane diagrams with at most three bridges have been classified \cite[Thm 1.8]{MZ}. A completely decomposable bridge trisection is one built out of low-bridge trisections. More precisely, a bridge trisection is \emph{completely decomposable} if it can be written as the distant sum of connected sums of bridge trisections with at most three bridges. These bridge trisections will play a role when defining the merging move in Section \ref{sec:split_merge}. The following propositions are sufficient conditions for a bridge trisection to be completely decomposable. In \cite[Thm 3.5]{Aranda_KT_2023}, it was proven that bridge trisections with $\mathcal{L}^*(\Tcal)\leq2$ are also completely decomposable. 

\begin{proposition}[Prop 4.1 of \cite{MZ}]\label{prop:completely_decomp}
Let $\Tcal$ be a $(b;c_1,c_2,c_3)$--bridge trisection with $c_i=b$ for some $i=1,2,3$. Then, $\Tcal$ is completely decomposable, and the underlying surface-link is the unlink of $\min_i\{c_i\}$ 2--spheres. 
\end{proposition}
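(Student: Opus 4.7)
The plan is to induct on the bridge number $b$. For $b \leq 3$, every $(b;c_1,c_2,c_3)$-trisection is completely decomposable by definition, so the statement reduces to identifying the underlying surface, which follows from the classification of low-bridge trisections in \cite[Thm 1.8]{MZ}.

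For $b \geq 4$, I would use the hypothesis $c_i = b$ (up to cyclic relabeling, $c_1 = b$) via Lemma \ref{lem:dual_bands}. Since the link $T_3 \cup \T_1$ is a $b$-component unlink presented with $b$ bridges, the lemma produces $b - b = 0$ dual bands, forcing $T_1 = T_3$ rel boundary. Consequently $c_2 = c_3$, and the disk system $\Dcal_1 \subset X_1$ consists of $b$ product bigons pairing each strand of $T_1$ with its twin in $T_3$.

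Next I would locate a splitting sphere realizing $\Tcal$ as a connect sum. Each product bigon of $\Dcal_1$ isotopes into $\Sigma$ to yield a disk $\delta \subset \Sigma$ enclosing exactly two punctures (one from each of $T_1$ and $T_3$). The heart of the argument is to choose $\delta$ so that the curve $\gamma = \partial \delta$ also bounds a disk in $B_2$ separating one strand of $T_2$ from the remaining $b - 1$. Since $T_2$ is a trivial tangle, Lemma \ref{lem:disk_system_uniqueness} together with the flexibility of its bridge disks would let me isotope $T_2$ (rel boundary) so that exactly one of its strands lies over $\delta$, giving the required compatibility. This produces a 3-sphere expressing $\Tcal$ as a connect sum $\Tcal' \# \Tcal''$, with $\Tcal'$ a $(1;1,1,1)$-trisection of an unknotted 2-sphere and $\Tcal''$ a $(b-1;c_1-1,c_2-1,c_3-1)$-trisection still satisfying $c_1'' = b - 1$.

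By induction, $\Tcal''$ is completely decomposable and $F_{\Tcal''}$ is the unlink of $\min_i c_i''$ spheres. Reassembling with the $\Tcal'$ summand gives complete decomposability of $\Tcal$ and identifies $F_\Tcal$ as the unlink of $\min_i c_i$ 2-spheres. The main obstacle is the construction of the splitting curve $\gamma$ compatible with $T_2$: a product bigon from $\Dcal_1$ only encodes information about $T_1$ and $T_3$, and extending the compatibility to the third tangle requires a controlled rel-boundary isotopy of $T_2$ guided by the product structure on $\Dcal_1$, ensuring that a single strand of $T_2$ can be placed so as to be split off by the same separating sphere.
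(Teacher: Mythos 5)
Your starting observation is right: if some $c_i = b$, Lemma~\ref{lem:dual_bands} produces an empty set of dual bands across that sector, so two of the three tangles coincide (after relabeling, $T_1 = T_3$) and one disk system is a product. This is indeed the first step of the argument in \cite{MZ}.

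The inductive step, however, has a genuine gap, and it is precisely the one you flagged. You want a curve $\gamma\subset\Sigma$ enclosing exactly two punctures that splits off a single strand from \emph{each} of $T_1$, $T_2$, $T_3$. Since $T_1=T_3$, the two punctures need to be the endpoints both of a strand of $T_1$ \emph{and} of a strand of $T_2$, i.e.\ $L_1 = T_1\cup\T_2$ must have a $1$--bridge component. When $c_1=c_2<b$ this can fail completely: for instance if $c_1=c_2=1$ and $b\geq 2$, then $L_1$ is a single $b$--bridge unknot whose strands alternate between $T_1$ and $T_2$ around one cycle, so \emph{no} pair of punctures is joined by both a $T_1$ strand and a $T_2$ strand. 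The ``controlled rel-boundary isotopy of $T_2$'' you ask for cannot exist because the endpoints of every strand of $T_2$ are fixed, and none of those endpoint pairs matches a product bigon of the product sector. There is also an arithmetic slip in the claimed decomposition: connect-summing with a $(1;1,1,1)$--trisection leaves all the invariants unchanged ($b=b_1+b_2-1$ with $b_1=1$ gives $b_2=b$, and similarly $c''_i=c_i$), so even where your splitting sphere exists the induction does not descend to smaller $b$.

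The route actually taken in \cite{MZ} (and mirrored later in this paper in the proof of Lemma~\ref{lem:merging_bands}) avoids this: take $v$ a set of $(b-c_1)$ dual bands with $T_1[v]=T_2$, and isotope so that $T_1\cup v$ is simultaneously crossingless. Then $T_2=T_1[v]$ and $T_3=T_1$ are crossingless as well, so all three tangles are crossingless in the same bridge-splitting of $\Sigma$. From that picture, complete decomposability is read off directly, and the Euler characteristic identifies $F_\Tcal$ as an unlink of $c_1=c_2=\min_i\{c_i\}$ two--spheres. If you want to salvage an inductive connect-sum argument, you would need to split off an entire $k$--bridge component of $L_1$ rather than a single strand, and you would then still have to show separately that the resulting $(k;1,1,k)$--piece is a perturbation of the genus-$0$ standard trisection; the crossingless argument handles both points at once.
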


\begin{proposition}[Thm 5.2 of \cite{JMMZ_solids}]\label{prop:completely_decomp_2}
Let $\Tcal$ be a $(b;c_1,c_2,c_3)$--bridge trisection with $c_i=b-1$ for some $i=1,2,3$. Then, $\Tcal$ is completely decomposable, and the underlying surface-link is either the unlink of $\min_i\{c_i\}$ 2--spheres or the unlink of $\min_i\{c_i\}$ 2--spheres and one projective plane, depending on whether $|c_{i-1}-c_{i+1}|=1$ or $c_{i-1}=c_{i+1}$. 
\end{proposition}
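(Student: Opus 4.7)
The plan is to reduce the $(b;c_1,c_2,c_3)$-bridge trisection to a two-bridge trisection by splitting off $b-2$ trivial 2-sphere summands and then invoking the classification of 2-bridge multisections from Theorem \ref{thm:class_2-bridge}. After a cyclic relabeling we may assume $c_1 = b - 1$.

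The unlink $T_1 \cup \T_2$ has $b$ strands in each tangle and $c_1 = b-1$ components, so it necessarily consists of one 2-bridge unknot component $K$ together with $b-2$ one-bridge unknot components. Lemma \ref{lem:dual_bands} produces a single dual band $v$ for $T_1$ with $T_1[v] = T_2$, and $v$ is confined to a 3-ball isolating $K$ from the one-bridge components. The $b-2$ one-bridge components witness $b-2$ pairs of endpoints $\{p_j, q_j\} \subset \Sigma$ at which $T_1$ and $T_2$ have coincident boundary-parallel arcs.

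The first main step is to establish that at each such pair $\{p_j, q_j\}$ the arc of $T_3$ is also boundary-parallel to the common arc shared by $T_1$ and $T_2$, giving a triple of matching trivial arcs. I would do this by applying Lemma \ref{lem:dual_bands} also to $T_2 \cup \T_3$ and $T_3 \cup \T_1$, producing dual bands that identify shared trivial strands across the other cross-sections; combining the three collections of shared strands and using Lemma \ref{lem:disk_system_uniqueness} to match their bounding disks across the three patches, one obtains, after a common isotopy, the desired triple-matching. This will simultaneously force $c_2, c_3 \in \{b-1, b\}$, explaining the dichotomy $|c_{i-1} - c_{i+1}| \in \{0,1\}$ in the proposition. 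Each triple of matching trivial arcs then bounds an unknotted 2-sphere summand of $F_\Tcal$, and splitting these off as distant sums reduces $\Tcal$ to a two-bridge trisection $\Tcal' = (2; 1, c_2', c_3')$ with $c_k' = c_k - (b - 2) \in \{1, 2\}$.

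Finally, Lemma \ref{lem:rational_tangles} and Proposition \ref{prop:Farey_triangles}, together with the discussion preceding Theorem \ref{thm:class_2-bridge}, classify every two-bridge trisection with $c_1' = 1$ as either $(2;1,1,1)$, which represents a standard projective plane $\Pcal_\pm$ (the $c_2 = c_3$ case), or one of the permutations $(2;1,1,2)$, $(2;1,2,1)$, each representing an unknotted 2-sphere (the $|c_2 - c_3| = 1$ case). Combining the reduced trisection with the $b-2$ sphere summands exhibits $\Tcal$ as completely decomposable and identifies $F_\Tcal$ as the claimed unlink, with the Euler characteristic $\chi(F_\Tcal) = c_1 + c_2 + c_3 - b$ serving as a consistency check. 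The main technical obstacle is Step 2: coordinating the three cross-sections to find $b-2$ compatible triples of matching trivial strands, since the naive approach of applying Lemma \ref{lem:dual_bands} to each cross-section independently yields three collections of identified strands that need not a priori coincide, and the delicate point is producing a global isotopy of the tangles that makes them compatible.
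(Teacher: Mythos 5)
This proposition is cited in the paper (as Theorem 5.2 of \cite{JMMZ_solids}) but not proved there, so there is no proof in this paper to compare against; your attempt must stand on its own.

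As written, your sketch has a genuine gap, and you have identified it yourself. The $b-2$ one--bridge components of $T_1\cup\T_2$ give $b-2$ pairs of punctures $\{p_j,q_j\}$ where $T_1$ and $T_2$ have boundary--parallel strands, but nothing a priori forces $T_3$ to run a trivial strand from $p_j$ to $q_j$. The hypothesis $c_1=b-1$ only concerns the pair $(T_1,T_2)$; the tangle $T_3$ could connect $p_j$ to a puncture in another pair, run through several of the pairs before returning, or carry substantial braiding, and still yield unlinks $T_2\cup\T_3$ and $T_3\cup\T_1$. To ``split off'' a trisected $2$--sphere summand at $\{p_j,q_j\}$ you need a separating $3$--sphere transverse to the whole spine, meeting $\Sigma$ in a circle isolating only $p_j,q_j$ and meeting each $T_k$ in exactly two boundary--parallel points; your Step~2 is precisely the construction of that sphere, and applying Lemma~\ref{lem:dual_bands} to each adjacent pair independently does not produce it, because the three families of trivial strands it gives need not be supported in coherent disjoint sub--balls. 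Without this coordination you also cannot conclude that $c_2',c_3'\in\{1,2\}$, so the final appeals to Lemma~\ref{lem:rational_tangles} and Proposition~\ref{prop:Farey_triangles} are not yet available.

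There is a secondary issue worth flagging so you don't carry a false confidence from the ``Euler characteristic consistency check.'' In the case $c_{i-1}=c_{i+1}$ (all three patch numbers equal $b-1$), the formula $\chi=c_1+c_2+c_3-b=2b-3$ counts $b-2$ sphere summands together with one $\RPtwo$, whereas the statement literally reads $\min_i\{c_i\}=b-1$ spheres plus one $\RPtwo$, whose Euler characteristic would be $2b-1$. The intended meaning is clearly a surface--link with $\min_i\{c_i\}$ components, one of which is a projective plane; your reduction produces $b-2$ spheres plus one $\RPtwo$, which is correct, but you should reconcile the count explicitly rather than treat it as if it matched the literal statement. This does not affect the main gap above, which remains: the existence of the $b-2$ compatible triple--matching strands is asserted, not proved.
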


Less is known about completely decomposable bridge multisections with at least four sectors. Given $n\geq 4$, a $b$--bridge $n$--section is \emph{completely decomposable} if it can be written as the distant sum of the connected sum of multisections with bridge number at most two. In \cite[Thm 6.2]{pants}, the authors used the $\mathcal{L}^*$--invariant to give a criterion for completely decomposable multisections. 

For the invested reader, the collection $\{\Tcal^p_{m,n}: p\geq 3, m\geq 1, n\geq 4\}$ in the proof of Theorem~\ref{prop:many_4_sections} contains non-completely decomposable $n$--sections of the unknotted 2--sphere for all $n\geq 4$. This claim is true as the cross-sections of completely decomposable multisections are distant sums of unlinks and connected sums of 2--bridge links. 

%%%%%%%%%%%%%%%%%%%%%%%%%%
\subsection{Multisections of twist spun knots} \label{sec:spun_knots}
In this subsection, we will use the concept of \emph{bridge bisection} of a surface with boundary properly embedded in $B^4$, which we define now. Suppose $T_a$, $T_b$, and $T_c$ are $b$--bridge trivial tangles with the same endpoints such that $T_a\cup \T_b$ and $T_b\cup \T_c$ are unlinks but $T_a\cup \T_c$ may be knotted. Let $(B^4, \Dcal_{ab})$ and $(B^4,\Dcal_{bc})$ be trivial disks bounded by $T_a\cup \T_b$ and $T_b\cup \T_c$. A \emph{bridge bisection} for a surface $F\subset B^4$ is a decomposition $(B^4,\Dcal_{ab})\cup_{(B^3,T_b)} (B^4,\Dcal_{bc})$. In this case, we say that $(T_a,T_b,T_c)$ is a \emph{bisection diagram} for $F$. Examples of bridge bisections for surfaces with boundary can be built by removing one 4-dimensional sector from a bridge trisection of a closed surface. The proof of Theorem 1.3 in \cite{MZ} can be used to show that every ribbon surface in $B^4$ admits a bridge bisection; we leave it as an exercise for the reader to verify this claim.

Let $\beta$ be a $(2b-1)$--strand braid and $m$ a fixed integer. We define the link $K(\beta)$ in $b$--bridge position, and the $(2b-1)$--strand tangles $T_a(\beta)$, $T_b(\beta,m)$, and $T_c(\beta)$ as in Figure \ref{fig:spun_knot_4sec}. In \cite[Sec 4.3]{pants}, the authors explained how $\left(T_a(\beta), T_b(\beta,m), T_c(\beta), T_b(\beta,0)\right)$ is a multiplane diagram for a $(2b-1,b)$--bridge 4--section of the $m$--twist spun of $K(\beta)$. In fact, $\left(T_a(\beta), T_b(\beta,m), T_c(\beta)\right)$ is a bridge bisection diagram for the ribbon disk obtained by $m$--twisting the half spin of $K(\beta)$. The following is a consequence of the observation that the tuple $\left(T_c(\beta), T_b(\beta,m), T_a(\beta)\right)$ represents the same ribbon disk in the opposite orientation.
%
%%%%%
\begin{figure}[h]
\centering
\includegraphics[width=.7\textwidth]{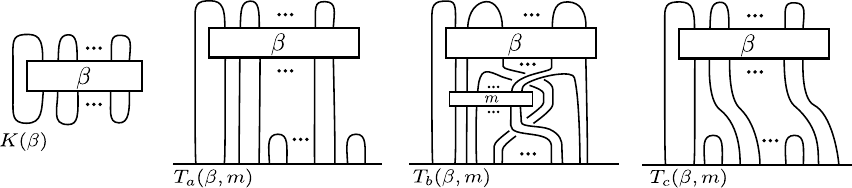}
\caption{Tangles determining multiplane diagrams for twist spun knots.}
\label{fig:spun_knot_4sec}
\end{figure}
%%%%%
%
\begin{proposition}\label{prop:multisec_spun}
Let $\beta\in B_{2b-1}$, $K=K(\beta)$ as above and let $(m_1,\dots, m_{2l})\in \Z^{2l}$ be a vector with alternating sum $m=\sum (-1)^i m_i$. Then the tuple 
\[
\left(T_a(\beta), T_b(\beta,m_1), T_c(\beta), T_b(\beta,m_2), \dots, 
T_a(\beta), T_b(\beta,m_{2l-1}), T_c(\beta), T_b(\beta,m_{2l})\right )
\]
is a $(2b-1;b)$--multiplane diagram for the $m$--twist spin of $K$. We denote such tuple as $\Tcal\left(\beta; (m_1,\dots, m_{2l})\right)$.
\end{proposition}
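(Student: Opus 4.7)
The plan is to proceed by induction on $l$, with the base case $l=1$ supplied by the 4-section $(T_a(\beta), T_b(\beta, m), T_c(\beta), T_b(\beta, 0))$ described just before the proposition. First I would verify that $\Tcal(\beta; (m_1, \ldots, m_{2l}))$ is a valid multiplane diagram. Every consecutive pair of its tangles has one of the four shapes $(T_a, T_b(\beta, \ast))$, $(T_b(\beta, \ast), T_c)$, $(T_c, T_b(\beta, \ast))$, or $(T_b(\beta, \ast), T_a)$, each of which already appears in the standard length-four tuple $(T_a, T_b(\beta, m), T_c, T_b(\beta, 0))$. Since the unlink condition on a consecutive pair depends only on that pair and not on the rest of the tuple, every consecutive pair in the longer cyclic tuple forms an unlink.

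Next I would identify the underlying surface by reading $\Tcal(\beta; (m_1, \ldots, m_{2l}))$ as a cyclic stacking of ribbon disk bisections. The consecutive triples $(T_{2i-1}, T_{2i}, T_{2i+1})$ (indices modulo $4l$) alternate between $(T_a, T_b(\beta, m_i), T_c)$ and $(T_c, T_b(\beta, m_i), T_a)$; by the observation cited immediately before the proposition, each such triple is a bisection of an $m_i$-twisted half spin of $K(\beta)$, with odd $i$ giving the original orientation and even $i$ giving the reverse. Hence the multiplane diagram describes $2l$ half-spin ribbon disks glued cyclically along $K(\beta)$ with alternating orientations, so $F_\Tcal$ is a 2-sphere built by iterating the spin construction.

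For the inductive step, I would show that extending from $\Tcal(\beta; (m_1, \ldots, m_{2l}))$ to $\Tcal(\beta; (m_1, \ldots, m_{2l+2}))$ by appending two more alternately-oriented ribbon disks with twists $m_{2l+1}$ and $m_{2l+2}$ shifts the total twist parameter of $F_{\Tcal}$ by exactly $(-1)^{2l+1} m_{2l+1} + (-1)^{2l+2} m_{2l+2} = -m_{2l+1} + m_{2l+2}$. The cleanest route is probably to exhibit an isotopy that absorbs the two new twists into one of the previously existing $T_b(\beta, \ast)$ tangles, reducing the $(l+1)$-case back to the $l$-case with the twist parameter shifted by this amount; combined with the inductive hypothesis this yields the claimed alternating-sum formula.

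The hard part will be justifying the twist arithmetic cleanly, namely, showing that inserting a pair of oppositely-oriented ribbon disks adjusts only the global twist without creating extra handles or extra connected components. I expect this can be handled by an explicit movie analysis of the multisection that tracks the braiding corresponding to each $T_b(\beta, \ast)$ tangle around the spinning axis; alternating orientations then translate into alternating signs, and the cyclic closure forces the total twist on the resulting 2-sphere to equal $\sum_{i=1}^{2l} (-1)^i m_i$.
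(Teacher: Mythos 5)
Your proposal takes essentially the same route as the paper: the paper does not write out a formal proof at all, instead stating the proposition as ``a consequence of the observation that the tuple $\left(T_c(\beta), T_b(\beta,m), T_a(\beta)\right)$ represents the same ribbon disk in the opposite orientation,'' and your identification of the alternating triples $(T_a, T_b(\beta, m_i), T_c)$ and $(T_c, T_b(\beta, m_i), T_a)$ as oppositely-oriented half-spin bisections is exactly that observation. Your inductive scaffold is a reasonable way to turn the observation into a full argument, and you are right that the step you flag as ``the hard part''---the twist arithmetic showing that stacking oppositely-oriented half-spin disks adds signed twists rather than producing genus or extra components---is precisely the content that the paper leaves implicit.
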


Using Proposition \ref{prop:multisec_spun}, we can build many multiplane diagrams of the same surface by choosing distinct alternating sums of a fixed integer. For instance, the set 
\[
\{\Tcal\left(\beta;(0,m,0,k-m)\right): m\geq 0\}
\]
contains infinitely many non-diffeomorphic 8--sections of the same $k$--twist spun knot. One can see this by comparing the cross-sections of such multiplane diagrams. The following result exploits this idea to build distinct multiplane diagrams of the same surface. Notice that Theorem \ref{prop:many_4_sections} is stated for multisections with at least four sectors. The trisection version ($n=3$) of this result is still open. 

\begin{theorem}\label{prop:many_4_sections}
Suppose that $F\subset S^4$ admits a $b$--bridge $n$--section for some $b\geq 2$, $n\geq 4$. Then for any $b'\geq b+2$, $F$ admits infinitely many non-diffeomorphic $b'$--bridge $n$--sections. 
\end{theorem}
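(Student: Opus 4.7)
The plan is to produce the infinite family of $b'$-bridge $n$-sections of $F$ by taking the pointed connected sum of the given $b$-bridge $n$-section $\Tcal$ with an infinite family of $b_0$-bridge $n$-sections of the unknotted $2$-sphere. Since $F\# S^2=F$ and the pointed connected-sum operation for multisections recalled in Section~\ref{sec:sums} yields a $(b+b_0-1)$-bridge $n$-section of the connected-sum surface, setting $b_0=b'-b+1\geq 3$ realizes every target $b'\geq b+2$, with perturbation moves used to interpolate any bridge numbers not directly produced by the auxiliary construction.

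For the auxiliary family I would apply Proposition~\ref{prop:multisec_spun} to a braid $\beta$ whose plat closure $K(\beta)$ is the unknot; since the $m$-twist spin of the unknot is the unknotted $2$-sphere for every $m$, each $\Tcal(\beta;(m_1,\dots,m_{2l}))$ with a prescribed alternating sum represents the unknotted $2$-sphere. Taking $l\geq 2$ and varying the parameters subject to the alternating-sum constraint, for instance via $\{\Tcal(\beta;(0,s,0,-s))\}_{s\in\Z}$, produces an infinite collection of $4l$-sections of the unknotted $2$-sphere whose cross-sections $T_b(\beta,s)\cup\overline{T_b(\beta,-s)}$ depend genuinely on $s$ up to $S^3$-diffeomorphism, provided $\beta$ is chosen with enough twist content; by Lemma~\ref{lem:crosssections_diffeo} these auxiliary multisections are pairwise non-diffeomorphic. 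To realize every $n\geq 4$ rather than only the values $n=4l$, I would insert duplicate tangles $T_i=T_{i+1}$ into each family member, which adds a product disk and so changes neither the underlying surface (the maneuver exploited in the proof of Theorem~\ref{thm:class_2-bridge}) nor the distinguishing cross-sections among the original tangles.

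Finally, I would take the pointed connected sum of $\Tcal$ with each auxiliary multisection to obtain a family of $b'$-bridge $n$-sections of $F$. The cross-section at the positions coming from the $T_b(\beta,\cdot)$ slots becomes the pointed connected sum of a fixed link inherited from $\Tcal$ with the $s$-dependent auxiliary link, and since pointed connected sum of links in $S^3$ is faithful on link types these cross-sections still vary with $s$; Lemma~\ref{lem:crosssections_diffeo} then upgrades this to non-diffeomorphism of the full multisections. The main technical obstacle is the cyclic permutation of sectors allowed in the definition of diffeomorphic multisections: a hypothetical diffeomorphism could attempt to match the distinguishing cross-section of family member $s$ with a different cross-section of family member $s'$ at a different pair of positions. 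Ruling this out requires choosing $\beta$ so that the varying cross-sections do not accidentally appear elsewhere in any family member, and placing the connect-sum basepoint so that the informative sector positions are combinatorially identifiable within the cyclic structure.
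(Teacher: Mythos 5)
Your proposal is correct and takes essentially the same approach as the paper: connect-sum the given $n$-section with an auxiliary family of $n$-sections of the unknotted $2$-sphere built from Proposition~\ref{prop:multisec_spun} (the paper uses $\Tcal(\sigma_1; (m,0,\dots,0))$, whose lone non-unlink cross-section is the $(3,3m)$-torus link), use $0$-sector perturbations and tangle duplication to hit all $b'$ and $n$, and distinguish the results via Lemma~\ref{lem:crosssections_diffeo}. The ``technical obstacle'' you flag at the end is real, but is resolved more cleanly than you propose: rather than tuning $\beta$ or the basepoint, the paper invokes uniqueness of prime decompositions for links in $S^3$ and passes to an infinite subfamily of parameters for which the torus-link prime summand cannot occur as a factor of any other cross-section.
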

\begin{proof}
We will find bridge multisections $\{\Tcal^p_{m,n}:p\geq 3,m\geq 1, n\geq 4\}$ satisfying the following properties: (1) each $\Tcal^p_{m,n}$ is a $p$--bridge $n$--section for an unknotted 2--sphere and, (2) the cross-sections of $\Tcal^p_{m,n}$ are the split union of unlinks and at most one $(3,3m)$--torus link. This way, if $\Tcal$ is a $b$--bridge $n$--section for $F$ for some $n\geq 4$, then $\mathcal{F}^p_n=\{\Tcal\# \Tcal^p_{m,n}:m\geq 1\}$ will be a collection of $(b+p-1)$--bridge $n$--sections representing $F$. 

We explain how to build $\Tcal^3_{m,n}$. Let $\sigma_1,\sigma_2\in B_3$ be the standard 3--braid generators and let $\vec V_m$ be the vector $(m,0,0,\dots, 0)\in \Z^{2l}$. As $K(\sigma_1)$ is a $2$--bridge unknot, Proposition \ref{prop:multisec_spun} states that $\Tcal^3_{m,4l}=\Tcal(\sigma_1,\vec V_m)$ is a $3$--bridge $4l$--section of the unknotted 2--sphere. By construction, all the cross-sections of $\Tcal^3_{m,4l}$ are of the form $T_a(\sigma_1)\cup \overline{T_b(\sigma_1,m)}$, $T_a(\sigma_1)\cup \overline{T_b(\sigma_1,0)}$, $T_a(\sigma_1)\cup \overline{T_c(\sigma_1)}$, $T_c(\sigma_1)\cup \overline{T_b(\sigma_1,m)}$, $T_c(\sigma_1)\cup \overline{T_b(\sigma_1,0)}$, and $T_b(\sigma_1,m)\cup \overline{T_b(\sigma_1,0)}$. Observe that the first five links are unlinks, and the last is the $(3,3m)$--torus link. To build $\Tcal^3_{m,n}$ for $n$ not a multiple of four, we can add trivial sectors by duplicating the desired amount of tangles in our multiplane diagram $\Tcal^3_{m,4l}$.

To build $\Tcal^p_{m,n}$ for arbitrary $p\geq 3$, one may use a move of multiplane diagrams that increases the bridge index by one without changing the underlying surface. In Section \ref{sec:perturbation}, we introduce such a move called a multiple-sector perturbation. A particular case of this is the 0--sector perturbation which does not change the link types appearing as a subset of the cross-sections of a given multiplane diagram (see Remark \ref{rem:0_pert}). In particular, we can define $\Tcal^{p}_{m,n}$ inductively to be a 0--sector perturbation of $\Tcal^{p-1}_{m,n}$ for each $p\geq 4$. Hence, the set $\{\Tcal^{p}_{m,n}:p\geq 3,m\geq 1, n\geq 4\}$ will still satisfy conditions (1) and (2). 

We now explain why the collection $\mathcal{F}^p_n$ contains infinitely many pairwise non-diffeomorphic $n$--sections. Notice first that $(3,3m)$--torus links yield non-isotopic links for distinct choices of integers $m\geq 1$ and that the cross-sections of $\Tcal\# \Tcal^p_{m,n}$ are connected sums of cross-sections of $\Tcal$ and $\Tcal^p_{m,n}$. These observations, together with the uniqueness of prime decompositions for links \cite{prime_links}, imply the existence of an infinite set $\mathcal{G}^p_n\subset \mathcal{F}^p_n$ with the following property: for each pair of distinct $x,y\in \mathcal{G}^p_n$, at least one cross-section of $x$ is not isotopic to any cross-section of $y$. Hence, by Lemma \ref{lem:crosssections_diffeo}, $\mathcal{G}^p_n$ is an infinite set of pairwise non-diffeomorphic $n$--sections for $F$. 
\end{proof}

% {\color{red}Talk about the effect of perturbation on cross-sections}
% \[
% \{\Tcal \#  U':U' \text{ is a 1-sector perturbation of } U\in \mathcal{U}_4\}
% \]

% {\color{red}Now study these multisections using alternating sums of one. All surfaces are the unknotted 2--sphere. }

% {\color{red} talk about multiplane diagrams separated by lots of perturbations}

% \begin{example}
% {\color{red} Can I show a sequence of perturbations connecting any two 1-twist spun 2--bridge knots????????????}
% \end{example}

%%%%%%%%%%%%%%%%%%%%%%%%%%%%%%%%%%%%%%%%%%%%
\section{Band surgery on multiplane diagrams}\label{sec:perturbation_rev}

In \cite{MTZ}, the authors described local modifications for triplane diagrams that affect the embedded surface by taking the connected sum with an unknotted $RP^2$ and by performing 1--handle addition. In this section, we explore these operations in the context of bridge multisections. We have three applications of these moves: Theorem \ref{thm:Tait_spines} shows that any $n$--regular Tait graph can appear as the spine of a bridge multisection for an unknotted surface, Theorem \ref{thm:main1} is a uniqueness result for multiplane diagrams yielding the same surface, and Theorem \ref{thm:uniqueness} is a uniqueness result for all multiplane diagrams. %, and a proof for Theorem \ref{thm:low_crosing} that states that multiplane diagrams with few crossings represent unknotted surfaces. 

%%%%%%%%%%%%%%%%%%%%%%%%%%%%%%%%%%%%%%%%%%%%%%
\subsection{Neighborhoods of a point}\label{sec:neighborhoods}

One can build new multiplane diagrams from old ones by performing local modifications around points of the central surface $\Sigma$. For instance, a multiplane diagram for the connected sum of two multisected surfaces can be obtained by taking the connected sum of each pair of tangles along the same puncture (see Section \ref{sec:sums}). 
% The elementary perturbation of a triplane diagram follows the same principle: to replace a certain neighborhood of an interior point of the central surface $*\in int(\Sigma)$ by a suitable properly embedded surface in the 4-ball. 
Similarly, the elementary perturbation of a triplane diagram involves replacing a certain neighborhood of an interior point of the central surface $*\in int(\Sigma)$ by a suitable properly embedded surface in the 4-ball.
Below we describe other ways in which a 4-ball neighborhood of $*\in int(\Sigma)$ can interact with a given multisected surface. 

The goal is to describe 1--handle additions and crosscap sums using framed arcs in each tangle of a multiplane diagram. Definition \ref{def:admissible_arcs} studies two kinds of arcs: those that intersect a tangle (\emph{type 1}) and those that don't (\emph{type 0}). For this, recall the discussion on bands from Section~\ref{sec:bands}; specifically panels (b) and (c) in Figure~\ref{fig:band_types}. Lemma \ref{lem:disk_from_arcs} explains how the arc data determines a disk neighborhood of $*\in int(\Sigma)$. This is important as we want to understand what part of the multisected surface we are modifying.

\begin{definition}\label{def:admissible_arcs}
Let $(S^4,F)=\bigcup_{i=1}^n (X_i, \Dcal_i)$ be a bridge multisection with multiplane diagram $\Tcal$ and let $*\in int(\Sigma)$ be an interior point of the central surface. A collection of arcs $\rho=\{\rho_i\}_{i=1}^{n}$ is called \emph{admissible} if it satisfies the following properties: 
\begin{enumerate}
\item each $\rho_i$ is an increasing arc in $B_i$ with one endpoint equal to $*$ and interior disjoint from $T_i$,  \label{prop:increasing}
\item $\partial \rho_i\cap T_i$ is at most one point; if $\rho_i\cap T_i=\emptyset$ we say that $\rho_i$ is \emph{type 0} and \emph{type 1} otherwise, 
% \item at least one arc $\rho_i$ is of type 0, 
\item\label{condition:rho_same_components} if both $\rho_i$ and $\rho_{i+1}$ are type 1 and $\rho_i\cup \rho_{i+1}$ connects the same component of $L_i$, then there exists an embedded 2--sphere $S\subset B_i\cup_\Sigma \overline{B}_{i+1}$ satisfying %the conditions of Lemma \ref{lem:existence_Delta}, 
\begin{enumerate}
\item $\rho_i\cup \rho_{i+1}$ is a subset of $S$, 
\item $S\cap L_i$ is an arc $\alpha$ of $L_i$ connecting the endpoints of $\rho_i\cup \rho_{i+1}$, and
\item $S\cap \Sigma$ is a loop containing $*$ and some punctures of $\Sigma$.
\end{enumerate}
\item\label{condition:rho_distinct_components} if both $\rho_i$ and $\rho_{i+1}$ are type 1 and $\rho_i\cup \rho_{i+1}$ connects distinct components $l_i$ and $l_i'$ of $L_i$, then there are pairwise disjoint embedded 2--spheres $S$ and $S'$ in $B_i\cup \overline{B}_{i+1}$ satisfying that 
\begin{enumerate}
\item $L_i\cap S=l_i$, $L_i\cap S'=l_i'$, 
\item $S\cup S'$ intersects $\rho_i\cup \rho_{i+1}$ in the endpoints of the arc, and 
\item $S\cap \Sigma$ and $S'\cap \Sigma$ are loops containing some punctures of $\Sigma$ but not containing $*$.
\end{enumerate}
\end{enumerate}
\end{definition}

Conditions (3) and (4) above may appear a bit cumbersome. In practice, one can interpret them as follows: after some isotopies of $L_i\cup \rho_i\cup \rho_{i+1}$ fixing the bridge surface $\Sigma$, $L_i\cup \rho_i\cup \rho_{i+1}$ has a diagram for which $\alpha\cup \rho_i\cup \rho_{i+1}$ (resp. $l_i\cup l_i'\cup\rho_i\cup \rho_{i+1}$) is crossingless like in the left (resp. right) side of Figure \ref{fig:existence_Delta}. Figures \ref{fig:exam_crosscap} and \ref{fig:exam_tube} have arcs satisfying Conditions (3) and (4), respectively.  Another feature of these conditions is that one can find embedded disks $D_i\subset \partial X_i$ bounded by $L_i$ and interior disjoint from $\rho_i\cup \rho_{i+1}$. If the arcs satisfy condition (3), the disks $D_i$ can be chosen so that $\rho_i\cup \rho_{i+1}$ is isotopic to an arc in $D_i$.

%%%%%
\begin{figure}[h]
\centering
\includegraphics[width=.6\textwidth]{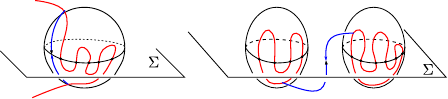}
% $\quad$
% \includegraphics[width=.3\textwidth]{images/fig_existence_Delta_b.PNG}
\caption{Model for arcs $\rho_i$ and $\rho_{i+1}$ satisfying conditions (3) and (4).}% determining a perturbation disk.}
\label{fig:existence_Delta}
\end{figure}
%%%%%

\begin{lemma}\label{lem:disk_from_arcs}
Let $\rho$ be a collection of admissible arcs for $\Tcal$. For every $i=1,\dots, n$, there is an embedded disk $\Delta_i\subset X_i$ with interior in $int(X_i)$ satisfying the following conditions; see Figure~\ref{fig:surgery_models} for a model of $\Delta_i$.

\begin{enumerate}
\item $\partial \Delta_i$ is the point union of $\partial \Delta_i \cap \partial X_i = \rho_i\cup \rho_{i+1}$ and an arc $\delta_i$ properly embedded in $X_i$.
\begin{enumerate}
\item If both $\rho_i$ and $\rho_{i+i}$ are type 1 and $\rho_i\cup \rho_{i+1}$ connects the same component of $L_i$, then $\Delta_i \cap \Dcal_i = \delta_i$. 
\item In the other cases\footnote{Some of $\rho_i$, $\rho_{i+1}$ is type 0 or both are type 1 with $\rho_i\cup\rho_{i+1}$ connecting distinct components of $L_i$}, $int(\delta_i)\cap \Dcal_i=\emptyset$ %is an arc with interior disjoint from $\Dcal_i$ 
and $\Delta_i\cap \Dcal_i=(\rho_i\cap T_i)\cup (\rho_{i+1}\cap T_{i+1})$.
\end{enumerate}
\item\label{condition:Deltai} $\Dcal_i \cup \Delta_i$ can be simultaneously be pushed into $\partial X_i$ by an isotopy that fixes $L_i\cup \rho_i\cup \rho_{i+1}$.
\end{enumerate}
Moreover, each $\Delta_i$ is unique up to isotopy in $X_i$ fixing $L_i\cup\rho_i\cup \rho_{i+1}$. 
\end{lemma}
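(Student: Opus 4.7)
The plan is to work inside each $X_i$ separately, reducing the construction to a picture in $\partial X_i \cong S^3$ and then pushing slightly into $int(X_i)$. Since $\Dcal_i$ is trivial in the 4--ball $X_i$, Lemma \ref{lem:disk_system_uniqueness} lets me assume, after an isotopy rel $L_i$, that $\Dcal_i$ lies in a thin collar of $\partial X_i$ and arises from a disk system $D_i \subset \partial X_i$ with $\partial D_i = L_i$ by pushing slightly into $int(X_i)$. The admissibility data of Definition \ref{def:admissible_arcs} then lets me choose $D_i$ compatibly with $\rho_i \cup \rho_{i+1}$: in case (a), Condition \ref{condition:rho_same_components} supplies a 2--sphere $S \subset \partial X_i$ containing $\rho_i \cup \rho_{i+1}$ together with an arc $\alpha$ of the component $l \subset L_i$ meeting its endpoints, and I pick $D_i$ so that the disk $D'_i \subset D_i$ with $\partial D'_i = l$ contains a parallel push-off $\alpha^+$ of $\alpha$ in its interior; in the remaining cases I use Condition \ref{condition:rho_distinct_components} (or, at a type-0 endpoint, a trivial $\Sigma$-parallel disk) to pick $D_i$ with interior disjoint from $\rho_i \cup \rho_{i+1}$.

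With these choices I construct $\Delta_i$ as follows. In case (a), the loop $\rho_i \cup \rho_{i+1} \cup \alpha^+$ bounds an embedded bigon $R \subset \partial X_i$ obtained from $S$ together with the collar of $D'_i$; pushing $int(R)$ into $int(X_i)$ while leaving $\alpha^+$ on the collared copy of $D'_i \subset \Dcal_i$ yields $\Delta_i$ with $\delta_i = \alpha^+$ and $\Delta_i \cap \Dcal_i = \delta_i$ by construction. In the remaining cases, the disjoint spheres of Condition \ref{condition:rho_distinct_components} (or the elementary $\Sigma$-parallel disks attached at type-0 endpoints) provide a closing arc $\beta_i \subset \partial X_i$ for $\rho_i \cup \rho_{i+1}$ whose interior is disjoint from $D_i$; the resulting bigon in $\partial X_i$, pushed into $int(X_i)$, gives $\Delta_i$ with $\delta_i$ having interior disjoint from $\Dcal_i$. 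Property (\ref{condition:Deltai}) is then automatic, since reversing the push-in isotopy simultaneously flattens $\Dcal_i \cup \Delta_i$ into $\partial X_i$ while fixing $L_i \cup \rho_i \cup \rho_{i+1}$.

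For uniqueness, any two valid disks $\Delta_i$ and $\Delta'_i$ can be pushed into $\partial X_i$ by Property (\ref{condition:Deltai}); the resulting disks share the arc $\rho_i \cup \rho_{i+1}$, and their closing arcs are either both arcs in a single disk component of $D_i$ with matching endpoints (case (a), where any two such arcs are isotopic by simple connectedness of a 2--disk) or both arcs in $\partial X_i$ with interiors disjoint from $D_i$ whose associated bigons are forced up to isotopy by the same sphere data. Straightening in $\partial X_i$ and pushing back into $X_i$ yields the desired isotopy fixing $L_i \cup \rho_i \cup \rho_{i+1}$. The main technical obstacle is a careful case analysis ensuring that the spheres supplied by Definition \ref{def:admissible_arcs} can be arranged to coexist with a standard $D_i$ producing a genuine bigon in each admissibility subcase---in particular accommodating mixed type-0/type-1 configurations without introducing spurious intersections between $\Delta_i$ and $\Dcal_i$.
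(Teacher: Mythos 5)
Your existence argument follows essentially the same route as the paper: use the admissibility spheres from Definition \ref{def:admissible_arcs}(3)--(4) to position a disk system $D_i\subset \partial X_i$ compatibly with $\rho_i\cup\rho_{i+1}$, then obtain $\Delta_i$ by a small push into $int(X_i)$. That part is fine.

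The uniqueness argument, however, has a genuine gap. You try to reduce uniqueness to a statement about arcs in $\partial X_i$: push both $\Delta_i$ and $\Delta'_i$ into $\partial X_i$ together with $\Dcal_i$, compare the resulting closing arcs, and then ``push back in.'' There are two problems. First, the two pushing isotopies provided by condition~(\ref{condition:Deltai}) are independent and need not carry $\Dcal_i$ to the same disk system $D_i\subset\partial X_i$, so the two flattened pictures are not directly comparable; you would first need to invoke uniqueness of trivial disk systems (Lemma~\ref{lem:disk_system_uniqueness}) to line them up, which you don't do. Second, and more seriously, even after the boundaries $\partial\Delta_i$ and $\partial\Delta'_i$ are made to coincide, matching boundary data does not give an isotopy of properly embedded disks in a 4--ball rel boundary: two properly embedded disks in a 4--manifold with the same boundary are not automatically isotopic, and the phrase ``forced up to isotopy by the same sphere data'' is not an argument. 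The paper's proof supplies precisely the missing 4--dimensional input: it observes that $\overline{X_i - N(\Dcal_i)}$ is a boundary connected sum of copies of $S^1\times B^3$, and invokes a uniqueness theorem for trivial disk systems in such manifolds (Lemma~8 of \cite{MZ_4mans}) to conclude that $\Delta_i$ is determined rel $\delta_i\cup\rho_i\cup\rho_{i+1}$ in that complement. Without an input of this kind, your uniqueness step does not go through.
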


\begin{proof}
Uniqueness of $\Dcal_i$ is the statement of Lemma \ref{lem:disk_system_uniqueness}. Conditions (1) and (2) for $\Delta_i$ above imply that the disk $\Delta_i$, depicted as a properly embedded disk in the complement of $\Dcal_i$, is a trivial disk system. As $\overline{B^4-N(\Dcal_i)}$ is diffeomorphic to a connected sum of copies of $S^1\times B^3$, Lemma 8 of \cite{MZ_4mans} implies that $\Delta_i$ is the unique disk system up to isotopy fixing $\delta_i\cup \rho_i\cup \rho_{i+1}$. Thus, any other disk $\Delta'_i$ for $\Dcal_i$ must be isotopic to $\Delta_i$ (rel boundary) in the complement of $\Dcal_i$.

To prove existence, we have cases to discuss, depending on the types of $\rho_i$ and $\rho_{i+1}$ and if $\rho_i\cup \rho_{i+1}$ connects the same component of $L_i$ or not. If one of the arcs is type 0, say $\rho_i$, we can shrink it so it lies near the point $*$ and then enlarge it so that it is equal to $\rho_{i+1}$. This process describes an isotopy of $\rho_i$ into $\rho_{i+1}$ that starts and ends in $\partial B^4$ and occurs in the interior of $B^4$ at the rest of the times; the desired disk $\Delta_i$ is the trace of the isotopy.  %This movie traces the desired disk $\Delta_i$. 

Suppose that $\rho_i$ and $\rho_{i+1}$ are both type 1 and $\rho_i\cup \rho_{i+1}$ connects distinct components of $L_i$. Condition \eqref{condition:rho_distinct_components} implies the existence of a collection of embedded disks $D_i\subset \partial X_i$ bounded by $L_i$ and interior disjoint from $\rho_i\cup \rho_{i+1}$ (see Figure \ref{fig:existence_Delta}). Let $\Delta_i$ be the disk traced by slightly pushing $\rho_i\cup \rho_{i+1}$ into the interior of $X_i$ while fixing its endpoints. As $int(D_i)\cap (\rho_i\cup \rho_{i+1})=\emptyset$ and $\Dcal_i$ is isotopic to $D_i$ (Lemma \ref{lem:disk_system_uniqueness}), then $\Delta_i$ satisfies (1) and (2) as desired. 
Suppose now that $\rho_i$, $\rho_{i+1}$ are both type 1 and $\rho_i\cup \rho_{i+1}$ connects the same component of $L_i$. Condition \ref{condition:rho_same_components} implies the existence of a collection of embedded disks $D_i\subset \partial X_i$ bounded by $L_i$ and containing the arc $\rho_i\cup \rho_{i+1}$. Then $\Delta_i$ as in the previous case will suffice. 
\end{proof}

In the following lemma, $L[\rho,f]$ denotes band surgery along an band described by a framed arc $(\rho,f)$ as in Section~\ref{sec:bands}.

\begin{lemma}\label{lem:framing_condition}
Let $\rho$ be a collection of admissible arcs for $\Tcal$. Suppose that $\rho_i$ and $\rho_{i+1}$ are both type 1. % for some $i$.%=1,\dots, n$. 
\begin{enumerate}
\item If $\rho_i\cup \rho_{i+1}$ connects the same component of $L_i$, then there exist exactly two framings, called the $\pm 1$--framings $f$, such that $L_i[\rho_i\cup\rho_{i+1},f]$ is a $c_i$--component unlink. Moreover, there exist exactly one framing, called the \emph{zero-framing} $f$, such that $L_i[\rho_i\cup\rho_{i+1},f]$ is a $(c_i+1)$--component unlink.  
\item If $\rho_i\cup \rho_{i+1}$ connects distinct components of $L_i$, then for any framing $f$, $L_i[\rho_i\cup\rho_{i+1},f]$ is a \mbox{$(c_i-1)$--component} unlink. 
\end{enumerate}
\end{lemma}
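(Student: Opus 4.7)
The plan is to analyze the classical band surgery $L_i[\eta, f]$, writing $\eta := \rho_i \cup \rho_{i+1}$, by exploiting the 2-spheres supplied by the admissibility conditions and reducing to concrete computations on split unknots and on loops lying on a sphere.

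I would first handle part (2). Admissibility condition (4) provides disjoint 2-spheres $S, S'$ with $S \cap L_i = l$, $S' \cap L_i = l'$, and $\eta$ meeting $S \cup S'$ only at its endpoints. These spheres exhibit $l$ and $l'$ as a split pair of unknots bounding disks $D_l, D_{l'}$ in disjoint 3-balls. The band sum of two such split unknots along any framed arc is again an unknot: any twists in the band can be absorbed by rotating one of the bounding disks before extending it across a neighborhood of $\eta$. Since the other $c_i - 2$ components of $L_i$ are untouched, $L_i[\eta, f]$ is a $(c_i - 1)$-component unlink for every framing $f$.

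Part (1) uses condition (3) to produce a 2-sphere $S$ that contains $\eta$ and meets $L_i$ in a single arc $\alpha$ of the affected component $K$. The loop $\eta \cup \alpha \subset S$ separates $S$ into two disks $D_\pm$; as noted just before the lemma, $\eta$ can be isotoped to lie on a Seifert disk for $K$, so I may arrange $\eta \cup \alpha$ to be crossingless in a standard diagram like Figure~\ref{fig:existence_Delta}. I would declare the \emph{zero framing} to be the framing of $\eta$ inherited from $S$, and parametrise alternative framings by the number of half-twists added in a neighborhood of $\eta$. With the zero framing the band is an annular strip lying flat on $S$, so band surgery cuts $K$ along a pushoff of $\eta$ into two sub-arcs that close up using $D_\pm$ to give two unknots in $S^3$, yielding a $(c_i + 1)$-component unlink. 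With the $\pm 1$ framing the band picks up a single half-twist and merges (rather than splits) the two sub-arcs of $K$ into a single component; that component still bounds an embedded disk obtained by capping $D_+$ with a $\pm 1$-framed disk over $\alpha$, and is therefore unknotted, producing a $c_i$-component unlink. Any other framing adds further half-twists that introduce either non-trivial linking (even case, Hopf-type pairs) or non-trivial knotting (odd case, twisted knots) between the resulting components, so $L_i[\eta, f]$ is not an unlink.

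The main obstacle I anticipate is pinning down the half-twist framing convention precisely---in particular reconciling it with the combination of $f$ and $f'$ at $*$ from Remark~\ref{rem:bands_as_arcs}---and then ruling out unlink outcomes for framings of large absolute value. Once a standard diagram in the style of Figure~\ref{fig:existence_Delta} is in place, both the splitting-versus-merging dichotomy and the knot types produced by each framing can be read off directly, giving the ``exactly one'' and ``exactly two'' counts in the statement.
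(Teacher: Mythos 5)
Your proposal is correct and follows essentially the same route as the paper: use the spheres furnished by the admissibility conditions to pass to a crossingless local model, then analyze the band surgery by counting half-twists (the connected-sum case being framing-independent). The ``obstacle'' you flag---ruling out unlink outcomes for framings of large absolute value---is precisely the point the paper handles cleanly by observing that in case (1) the surgered component(s) form a $(2,n)$-torus link with $n$ determined by the framing, and among these only $n = 0, \pm 1$ give unlinks.
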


\begin{proof}
Conditions \eqref{condition:rho_same_components} and \eqref{condition:rho_distinct_components} of Definition \ref{def:admissible_arcs} ensure in each case the existence of disks $D_i\subset \partial X_i$ with boundary $L_i$ and interior disjoint from $\rho_{i}\cup \rho_{i+1}$. In particular, $\rho_i\cup \rho_{i+1}$ intersects either one or two components of the $c_i$ components of $L_i$. 
If $\rho_i\cup \rho_{i+1}$ connects the same component of $L_i$, then $L_i[\rho_i\cup\rho_{i+1},f]$ is the union of an unlink of $(c_i-2)$ unknots together with a $(2,n)$--torus link, where $n$ depends on the framing of $\rho_i\cup \rho_{i+1}$. In this case, there are exactly two framings ($n=\pm 1$) for which the torus knot is an unknot and one framing ($n=0$) for which we have a 2--component unlink. On the other hand, if $\rho_i\cup \rho_{i+1}$ connects distinct components of $L_i$, the band given by any framing $f$ becomes a connected sum band. Hence, $L_i[\rho_i\cup\rho_{i+1},f]$ is a $(c_i-1)$--component unlink. 
\end{proof}

%%%%%
\begin{figure}[h]
\centering
\includegraphics[width=.6\textwidth]{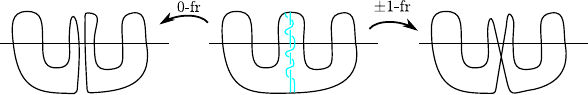}
\caption{Two kinds of framings satisfy that $L_i[\rho_i\cup \rho_{i+1},f]$ is an unlink.}
\label{fig:framing_model}
\end{figure}
%%%%%

\begin{definition}\label{def:admissible_tuple_frames}
Let $(\rho,f)=\{(\rho_i,f_i)\}_{i=1}^n$ be a tuple of framed arcs in a multiplane diagram $\Tcal$. We say that $(\rho,f)$ is \emph{admissible} if $\rho$ is a collection of admissible arcs for $\Tcal$, the framings $\{f_i\}_{i=1}^n$ agree at $*\in int(\Sigma)$, and the framing $f_i+f_{i+1}$ is zero or $\pm 1$ whenever $\rho_i\cup\rho_{i+1}$ connects the same component of $L_i$. 
\end{definition}

Note that framing on type 0 arcs does not change the isotopy class of the surgered tangle. Thus in our figures, we will only emphasize the framing on type one arcs--e.g. Figure \ref{fig:exam_crosscap}. 

%%%%%%%%%%%%%%%%%%%%%%%%%%%%%%%%%%%%%%
\subsection{Band surgery}\label{sec:band_surgery}
\begin{definition}
Let $\Tcal=(T_1,\dots, T_n)$ be a multiplane diagram and let $(\rho,f)$ be an admissible tuple of framed arcs for $\Tcal$. The \emph{band surgery} of $\Tcal$ along $(\rho,f)$, denoted by $\Tcal[\rho,f]=(T_1[\rho_1,f_1], \dots,  T_n[\rho_n,f_n])$, is the new tuple where each tangle is obtained by band surgery as in Remark~\ref{rem:bands_as_arcs}. 
%can be surgered along an admissible tuple of framed arcs $(\rho,f)$ to yield a new tuple of tangles denoted by $\Tcal[\rho,f]=(T_1[\rho_1,f_1], \dots,  T_n[\rho_n,f_n])$. We say that $\Tcal[\rho,f]$ is the result of performing \emph{band surgery} to $\Tcal$ along $(\rho,f)$. We call this operation band surgery as bands (thus surgered tangles) are obtained from framed arcs as in Remark \ref{rem:bands_as_arcs}. 
\end{definition}

\begin{lemma}\label{lem:surgery_rhos}
If $(\rho,f)$ is an admissible tuple of framed arcs for a $b$--bridge multiplane diagram $\Tcal$, then $\Tcal[\rho,f]$ is the spine of a $(b+1)$--bridge multisected surface. 
\end{lemma}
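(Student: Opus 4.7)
The plan is to verify that $\Tcal[\rho, f]$ satisfies Definition \ref{def:multiplane_diagram} with bridge number $b+1$; combined with Lemma \ref{lem:disk_system_uniqueness} and the discussion following Definition \ref{def:multiplane_diagram}, this upgrades $\Tcal[\rho, f]$ from a tuple of tangles to the spine of an honest bridge multisected surface in $S^4$. Two things must be checked: (i) each $T_i[\rho_i, f_i]$ is a trivial tangle with exactly $b+1$ bridges, and (ii) for every $i$ the consecutive union $T_i[\rho_i, f_i] \cup \overline{T_{i+1}[\rho_{i+1}, f_{i+1}]}$ is an unlink in $S^3$.

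For (i), I would exploit the assumption that $\rho_i$ is an increasing arc with interior disjoint from $T_i$. If $\rho_i$ is type 1, band surgery cuts a single bridge of $T_i$ at $\rho_i \cap T_i$ and sends the two new half-bridges up to $\Sigma$ along parallel copies of $\rho_i$ dictated by $f_i$; combining the bridge disk of the original bridge with the disk $\Delta_i$ from Lemma \ref{lem:disk_from_arcs} provides a family of bridge disks exhibiting $T_i[\rho_i, f_i]$ as a trivial $(b+1)$-bridge tangle. If $\rho_i$ is type 0, band surgery inserts a single new bridge parallel to $\Sigma$, living in a thin collar of $\rho_i$; the disk $\Delta_i$ isotopes this new bridge onto $\Sigma$, so triviality is again maintained. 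In either case exactly one strand is added, so the bridge number of $T_i[\rho_i,f_i]$ is $b+1$.

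For (ii), I would identify the consecutive union with $L_i[\rho_i \cup \rho_{i+1}, f_i + f_{i+1}]$, where $L_i = T_i \cup \overline{T}_{i+1}$ is a $c_i$-component unlink. When both $\rho_i, \rho_{i+1}$ are type 1, Lemma \ref{lem:framing_condition} applies directly: the admissibility condition on $f_i + f_{i+1}$ from Definition \ref{def:admissible_tuple_frames} is precisely what forces the output of the lemma to be an unlink. When at least one of the arcs is type 0, the corresponding surgered bridge is unknotted and can be pushed into $\Sigma$ via Lemma \ref{lem:disk_from_arcs}; its contribution to the cross-sectional link is either an additional split unknotted component (both type 0) or an arc attached as a band to such a split component (mixed case). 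In each subcase the output remains an unlink. Assembling (i) and (ii), $\Tcal[\rho, f]$ is a multiplane diagram of bridge number $b+1$, which by the paragraph after Definition \ref{def:multiplane_diagram} is the spine of a $(b+1)$-bridge multisection of a surface in $S^4$. The main technical obstacle I foresee is the mixed-type case of (ii), which is not covered by Lemma \ref{lem:framing_condition} and requires explicit use of the disks $\Delta_i, \Delta_{i+1}$ to produce the splitting 2-sphere witnessing the unlink.
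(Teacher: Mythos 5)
Your proposal is correct and follows essentially the same route as the paper: check that each cross-sectional link $L_i[\rho_i\cup\rho_{i+1},f_i+f_{i+1}]$ is an unlink in $(b+1)$--bridge position, invoking Lemma \ref{lem:framing_condition} when both arcs are type 1 and handling the type-0 cases directly. Two small remarks. Your part (i) is already discharged in the paper's Section 2 discussion of bands of types (b) and (c), where it is observed that surgery along a monotonic band takes a trivial $b$--bridge tangle to a trivial $(b+1)$--bridge tangle; so no separate bridge-disk construction is needed. More importantly, the ``technical obstacle'' you flag for the mixed-type case dissolves once you notice that when exactly one of $\rho_i,\rho_{i+1}$ is type 1, the surgered link $L_i[\rho_i\cup\rho_{i+1},f_i+f_{i+1}]$ is simply a perturbation of $L_i$ (a finger move pushing one strand through $\Sigma$ near $*$), which increases the bridge number by one while preserving the isotopy class of the link; no splitting sphere argument is required. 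Similarly, when both arcs are type 0, the surgery just inserts a small split $1$--bridge unknot. Both observations give the unlink conclusion immediately and more cleanly than the route you sketched.
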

\begin{proof}
We need to check that for each $i\in \Z_n$, $T[\rho_i,f_i]$ is a $(b+1)$-bridge trivial tangle and the surgered link $L_i[\rho_i\cup \rho_{i+1},f_i+f_{i+1}]$ is an unlink. % in $(b+1)$--bridge position for each pair of consecutive arcs $\rho_i$ and $\rho_{i+1}$. 
To see the first conclusion, recall that each $\rho_i$ is an increasing arc with one endpoint in $*\in \Sigma$; see Definition~\ref{def:admissible_arcs}\eqref{prop:increasing}. If $\rho_i$ is type 0, then $T_i[\rho_i,f_i]$ is equal to $T_i$ with an added 1--bridge strand around the band described by $(\rho_i,f_i)$. If $\rho_i$ is type 1, denote by $s$ the strand of $T_i$ containing one endpoint of $\rho_i$. We can slide the arc along $s$ until we reach the unique local maximum of the strand; this is possible as $T_i$ is a trivial tangle. The resulting arc $\rho_i$ can also be assume to satisfy Condition~\eqref{prop:increasing} in Definition~\ref{def:admissible_arcs}. In this position, it is easy to see that surgery along $(\rho_i,f)$ splits $s$ into two strands that have exactly one local maximum (near the maximum of $s$); note that the two strands will twist according to the framing $f_i$. Thus, $T_i[\rho_i,f_i]$ is trivial.
\\
We now explain why $L_i[\rho_i\cup \rho_{i+1},f_i+f_{i+1}]$ is an unlink. Lemma \ref{lem:framing_condition} ensures this whenever $\rho_i$ and $\rho_{i+1}$ are type 1. If both arcs are type zero, then $L_i[\rho_i\cup \rho_{i+1},f_i+f_{i+1}]$ is obtained from $L_i$ by adding a small 1--bridge unknot. If $\rho_i$ and $\rho_{i+1}$ are of different types, then $L_i[\rho_i\cup \rho_{i+1},f_i+f_{i+1}]$ is obtained by perturbing $L_i$. 
\end{proof}

Consider an admissible tuple $(\rho,f)$ and disks $\{\Delta_i\}_{i=1}^n$ as in Lemma \ref{lem:disk_from_arcs}. By construction, the surface represented by $\Tcal[\rho,f]$ differs from $F$ in a neighborhood of the disks $\Delta_1\cup \dots \cup \Delta_n$. The new disk system $\Dcal'_i$ is obtained from $\Dcal_i$ by modifying a neighborhood of $\Delta_i\cap \Dcal_i$ with certain surfaces near $\Delta_i$; if $\Delta_i\cap \Dcal_i$ is empty, we will add connected components to $\Dcal_i$. %with certain push-offs of $\Delta_i$. 
Figure \ref{fig:surgery_models} contains drawings of these modifications for the different types of $\rho_i$ and $\rho_{i+1}$, which we now describe. If both arcs $\rho_i$ and $\rho_{i+1}$ are type 0, then $\Delta_i$ is disjoint from $\Dcal_i$. In this case, $\Dcal'_i$ gains a small disk near $\Delta_i$ as in Part (a) of Figure \ref{fig:surgery_models}. If the types of $\rho_i$ and $\rho_{i+1}$ are distinct as in Part (b), the disk $\Dcal'_i$ is an elongated copy of $\Dcal_i$. When both $\rho_i$ and $\rho_{i+1}$ are type 1 and $\rho_i\cup \rho_{i+1}$ connects the same component of $L_i$, $\Dcal_i$ intersects a neighborhood of $\Delta_i$ in a rectangle as in Part (c). Here, if the framing $f_i+f_{i+1}$ is zero, then we replace this rectangle with two disjoint copies of $\Delta_i$. If the framing is $\pm 1$ instead, we add a half-twisted band connecting the two copies of $\Delta_i$ as in the right side of Part (c). As we see in Part (d), if $\rho_i+\rho_{i+1}$ connects distinct components of $L_i$, $\Dcal_i$ gains a band with twisting given by the framing of the arcs. 

%%%%%
\begin{figure}[h]
\centering
\includegraphics[width=.7\textwidth]{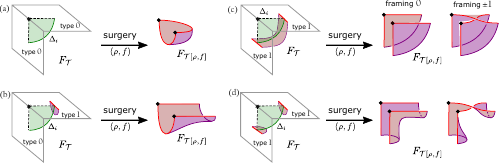}
\caption{Given an admissible tuple $(\rho,f)$, the disks $\Delta_i$ can be used to change $F_\Tcal$ in a controlled way inside each sector. %How the disks $\Dcal_i$ change after surgery along an admissible tuple $(\rho,f)$. 
We have four cases, depending on the types of the consecutive arcs $\rho_i$ and $\rho_{i+1}$ and whether $\rho_i\cup \rho_{i+1}$ connects (c) the same component of $L_i$ or (d) not. In case (c), we have two options, depending on the framing of $\rho_i\cup \rho_{i+1}$. In case (d), any choice of framing works; that said, we see that different framings produce twisting between the components of $\Dcal_i$. }
\label{fig:surgery_models}
\end{figure}
%%%%%

%%%%%%%%%%%%%%%%%%%%%%%%%%%%%%%%%%%%%%%%%%%%
%\subsubsection{Easy cases} 
%In the statement of Proposition \ref{thm:surgery_rhos_tubing}, we do not attempt to keep track of the number of tubings and crosscaps being attached to our multisected surface. %To give the reader a general idea on how to do this, 
Before we state the main technical result of this section (Thm \ref{thm:surgery_rhos_tubing}), we will describe examples of tuples $(\rho,f)$ for which we understand how $F_{\Tcal}$ and $F_{\Tcal[\rho,f]}$ are related. We delay explanations for Examples~\ref{ex:tubings}, \ref{ex:more_tubes}, and \ref{ex:crosscaps} until after Remark~\ref{rem:counting_tubes}, as this remark provides a concrete process for determining how $F_\Tcal$ changes.

\begin{example}[Tubings]\label{ex:tubings}
Fix $1\leq m \leq n-2$ and consider an admissible tuple $(\rho,f)$ having the following properties: 
(1) the arcs $\rho_i$ are type 1 for $i=1,\dots, m+1$ and type 0 otherwise, and
(2) for $i=1,\dots, m$, the framing $f_i+f_{i+1}$ is zero whenever $\rho_i\cup \rho_{i+1}$ connects the same component of $L_i$. 
If $r$ is the number of arcs $\rho_i\cup \rho_{i+1}$ connecting distinct components of $L_i$, then the surface $F_{\Tcal[\rho,f]}$ is obtained from $F_{\Tcal}$ by adding $r$ 1--handles. See Example~\ref{ex:explain_tubings} for an explanation.
\\
%This fact follows from Remark \ref{rem:counting_tubes}. %looking at Figure \ref{fig:surgery_proof_2}(a)--(d) and Figure \ref{fig:surgery_proof_1}. 
When $(n,m,r)=(n,m,0)$, $\Tcal[\rho,f]$ and $\Tcal$ represent isotopic surfaces. We will discuss more on this case in Section \ref{sec:perturbation}. 
When $(n,m,r)=(3,1,1)$ this corresponds to the tubing operation introduced in \cite{MTZ}.  Figure \ref{fig:exam_tube} shows the fact that the spin of a 2--bridge knot is obtained by adding one 1--handle to a 2--component unlink of 2--spheres.
\end{example}

%%%%%
\begin{figure}[h]
\centering
\includegraphics[width=.35\textwidth]{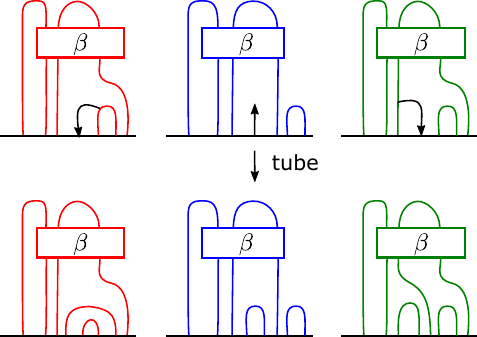}
\caption{Turning a $(3;2,2,3)$--bridge trisection of a 2--component unlink of 2--spheres into a $(4;2)$--bridge trisection of a spun knot. The top surface is unknotted by Proposition \ref{prop:completely_decomp}. }
\label{fig:exam_tube}
\end{figure}
%%%%%

\begin{example}[More Tubings]\label{ex:more_tubes}
Consider an admissible tuple $(\rho,f)$ satisfying the following condition:
(1) if $\rho_i$ and $\rho_j$ type 1 arcs for some $i\neq j$, then $2<|i-j|<n-1$. 
If $t$ is the number of type 1 arcs, then the surface $F_{\Tcal[\rho,f]}$ is obtained from $F_{\Tcal}$ by adding $(t-1)$ 1--handles. See Example~\ref{ex:explain_more} for an explanation.
\\
%This fact follows from Remark \ref{rem:counting_tubes}.  
%This fact can be checked by looking at Figure \ref{fig:surgery_proof_3}. 
When $t=1$, $\Tcal[\rho,f]$ and $\Tcal$ represent isotopic surfaces (see Remark \ref{rem:0_pert} for more on this case). %Section \ref{sec:perturbation}. 
%When $(n,m,r)=(3,1,1)$ this corresponds to the tubing operation introduced in \cite{MTZ}.  
Figure \ref{fig:exam_tube_2} shows how to use one 1--handle to turn a 2--component unlink of 2--spheres into the spin of a trefoil knot. We leave it as an exercise for the reader to see that this multiplane diagram is equivalent to some $\Tcal(\beta,(0,0))$ from Proposition \ref{prop:multisec_spun}. % like $\left(T_a(\beta), T_b(\beta,0), T_c(\beta), T_b(\beta,0)\right)$ as in Subsection \ref{sec:spun_knots}.
\end{example}

\begin{example}[Crosscap sums]\label{ex:crosscaps}
Fix $1\leq m \leq n-2$ and consider an admissible tuple $(\rho,f)$ having the following properties: 
(1) the arcs $\rho_i$ are type 1 for $i=1,\dots, m+1$ and type 0 otherwise, and
(2) for $i=1,\dots, m$, the arc $\rho_i\cup \rho_{i+1}$ connects the same component of $L_i$. 
If $r$ is the number of framings $f_i+f_{i+1}$ equal to $\pm 1$ for some $i=1,\dots, m$. Then the surface $F_{\Tcal[\rho,f]}$ is obtained from $F_{\Tcal}$ by taking the connected sum with $r$ unknotted projective planes. See Example~\ref{ex:explain_crosscap} for an explanation.
\\
In practice, one can use Lemma~\ref{lem:euler} to compare the Euler numbers of $F_{\Tcal}$ and $F_{\Tcal[\rho,f]}$; thus determining how many crosscap summands correspond to $\Pcal_+$ and $\Pcal_-$. More precisely, the non-negative integers $r_-$ and $r_+$ satisfying $F_{\Tcal[\rho,f]}=F_{\Tcal}\# r_-\Pcal_- \# r_+ \Pcal_+$ solve the equations
$$r=r_- + r_+, \text{ and } e\left(F_{\Tcal[\rho,f]}\right)=e\left(F_{\Tcal}\right)-2r_- + 2r_+.$$
%This fact follows from Remark \ref{rem:counting_tubes}. %This fact can be checked by looking at Figure \ref{fig:surgery_proof_2}(c)--(e). %When $(n,m,r)=(n,m,0)$, $\Tcal[\rho,f]$ and $\Tcal$ represent isotopic surfaces. We will discuss more on this case in Section \ref{sec:perturbation}. 
When $(n,m,r)=(3,1,1)$, this corresponds to the crosscap sum introduced in \cite{MTZ}. In Figure \ref{fig:exam_crosscap}, we show how to draw a 4-plane diagram for an unknotted Klein bottle by adding two projective planes to a 1--bridge diagram of a 2--sphere.  
\end{example}

% %%%%%
% \begin{figure}[h]
% \centering
% % \includegraphics[width=.5\textwidth]{images/fig_exam_crosscap.PNG}
% \includegraphics[width=.5\textwidth]{new_images/fig_10}
% \caption{Turning a $(1;1)$--bridge 4--section of an unknotted 2--sphere into a $(2;1)$--bridge 4--section of a Klein bottle.}
% \label{fig:exam_crosscap}
% \end{figure}
% %%%%%

% \begin{example}[$m$--sector perturbations]\label{ex:perturbation}
% Fix $0\leq m \leq n-2$ and consider an admissible tuple $(\rho,f)$ having the following properties: 
% (1) the arcs $\rho_i$ are type 1 for $i=1,\dots, m+1$ and type 0 otherwise, and
% (2) for $i=1,\dots, m$, $\rho_i\cup \rho_{i+1}$ connects the same component of $L_i$ with framing $f_i+f_{i+1}$ equal to zero.
% Then $\Tcal[\rho,f]$ and $\Tcal$ represent isotopic surfaces.
% \\
% This fact can be checked by looking at Figure \ref{fig:surgery_proof_2}(a)--(d). The isotopy between $F_{\Tcal[\rho,f]}$ and $F_{\Tcal}$ is supported in a neighborhood of \mbox{$X_1\cup \dots \cup X_m$} and consists in pushing \mbox{$\Dcal_1\cup \dots \cup \Dcal_m$} across $\Delta$ into \mbox{$X_{m+1}\cup \dots \cup X_n$.} We call this new multisection an \emph{$m$--sector perturbation of $\Tcal$}. Figures \ref{fig:quad_torus} and \ref{fig:quad_Klein} have examples of admissible arcs inducing two--sector perturbations. When $(n,m,r)=(3,1,1)$ this corresponds to the stabilization move for triplane diagrams introduced in \cite{MZ}. Section \ref{sec:perturbation} further explores this move.
% \end{example}

%%%%%%%%%%%%%%%%%%%%%%%%%%%%%%
The theorem below shows that most band surgeries change the underlying surface by 1--handle additions and crosscap sums. In practice, Remark \ref{rem:counting_tubes} gives a concrete process to decide how many tubes and crosscaps we add to $F_\Tcal$ to build $F_{\Tcal[\rho,f]}$.
\begin{theorem}\label{thm:surgery_rhos_tubing}
Let $\Tcal$ be a bridge multisection for $F\subset S^4$. Suppose that $(\rho,f)$ is an admissible tuple of framed arcs for $\Tcal$ with arcs of \textbf{both} types 0 and 1. Then, the surface represented by $\Tcal[\rho,f]$ can be obtained from $F$ by a sequence of isotopies, 1--handle additions, and crosscap sums. 
\end{theorem}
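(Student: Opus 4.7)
The strategy is to factor the band surgery $\Tcal \mapsto \Tcal[\rho,f]$ as a composition of simpler band surgeries, each matching the hypotheses of Example \ref{ex:tubings}, \ref{ex:more_tubes}, or \ref{ex:crosscaps}. Since those examples already produce only isotopies, $1$--handle additions, and crosscap sums, chaining the contributions will yield the desired conclusion.

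The first ingredient is a \emph{locality principle}: because each disk $\Delta_i$ from Lemma \ref{lem:disk_from_arcs} is contained in its own $4$-ball $X_i$, surgeries in disjoint sectors can be carried out one at a time and their effects on $F$ compose. Using the hypothesis that $(\rho,f)$ has arcs of both types, I would cyclically reindex so that $\rho_n$ is type $0$, and partition the remaining type $1$ arcs into maximal runs $R_1,\ldots,R_k$ of consecutive type $1$ arcs, each bounded on both sides by type $0$ arcs. I would then process one run at a time, performing the partial band surgery along the arcs in $R_j$ while treating every other arc as a trivial type $0$ arc (placed near $*$). If $R_j$ consists of a single isolated type $1$ arc, the partial surgery is a perturbation in the sense of Example \ref{ex:more_tubes} with $t=1$, contributing only an isotopy. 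If $R_j$ contains at least two arcs, I would further decompose pair-by-pair through the consecutive pairs $(\rho_i,\rho_{i+1})$ within the run: a distinct-component pair contributes a $1$--handle addition by Example \ref{ex:tubings}, a same-component pair with framing $\pm 1$ contributes a crosscap sum by Example \ref{ex:crosscaps}, and a same-component pair with framing $0$ should contribute an ambient isotopy.

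The principal obstacle is verifying the last case rigorously, and arguing that the pair-by-pair peeling preserves admissibility of the sub-tuples at each stage. When $\rho_i \cup \rho_{i+1}$ connects the same component with framing $0$, the disk $\Dcal_i$ is split into two parallel disks in sector $X_i$ (see Figure \ref{fig:surgery_models}(c), left panel), and one must show that the ambient surface changes only by isotopy. Here I would exploit condition (3) of Definition \ref{def:admissible_arcs}: the spanning $2$--sphere $S$ containing $\rho_i\cup\rho_{i+1}$ and an arc $\alpha$ of $L_i$ provides a neighborhood in $X_{i-1}\cup X_i\cup X_{i+1}$ in which the split can be realized as an ambient isotopy of $F$ relative to the boundaries of the other sectors. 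Once this local model is established, summing the effects across all runs $R_1,\ldots,R_k$ yields the desired sequence of isotopies, $1$--handle additions, and crosscap sums relating $F$ to $F_{\Tcal[\rho,f]}$.
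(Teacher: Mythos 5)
Your factorization strategy captures the correct local analysis for \emph{consecutive} pairs of type~1 arcs: distinct-component pairs contribute a $1$--handle, same-component pairs with framing $\pm 1$ contribute a crosscap, and same-component pairs with framing $0$ contribute an isotopy (this last case being resolved via condition (3) of Definition~\ref{def:admissible_arcs}, essentially as the paper does in Figure~\ref{fig:surgery_proof_2}). This matches the first half of the paper's proof. However, the ``run-by-run summing'' that you use to assemble the total effect has a systematic gap: you assert that every isolated run (single type~1 arc separated by type~0 arcs) contributes only an isotopy, and then sum independent contributions over runs. This is incorrect whenever there is more than one run. Consider two non-consecutive type~1 arcs $\rho_1$ and $\rho_j$ with everything else type~0: your accounting gives ``isotopy $+$ isotopy $=$ isotopy,'' but the surgered surface actually differs from $F$ by one $1$--handle (this is exactly Example~\ref{ex:more_tubes} with $t=2$). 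The reason is that comparing $\Tcal[\rho,f]$ to its one-fewer-type-1-arc version is a comparison at the \emph{same} bridge number $b+1$, and each time you turn a type~0 arc back into a type~1 arc outside the current maximal run you necessarily fuse two previously disjoint pieces of the disk system across a neighborhood of $*$, which produces a $1$--handle. The paper handles this explicitly as a separate step (Figure~\ref{fig:surgery_proof_3}): after eliminating consecutive type~1 pairs, each remaining non-consecutive type~1 arc is traded for a $1$--handle. Your argument would need to incorporate these ``inter-run'' $1$--handles; without them the conclusion for $n \geq 4$ is wrong (for $n=3$ the issue never arises since any two of three sectors are adjacent).

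A secondary concern: you cite Examples~\ref{ex:tubings}, \ref{ex:more_tubes}, and~\ref{ex:crosscaps} as established facts, but the paper states that those examples ``follow from Remark~\ref{rem:counting_tubes},'' which is itself a reformulation of (and depends on) Theorem~\ref{thm:surgery_rhos_tubing}. Invoking them here is circular. Your direct geometric argument for the same-component framing~$0$ case is the right kind of reasoning and shows you know how to escape the circularity, but the appeal to the examples for the $1$--handle and crosscap cases should be replaced by explicit local models analogous to Figures~\ref{fig:surgery_proof_1} and~\ref{fig:surgery_proof_2}.
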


\begin{proof}
Note that if only one arc is type 1, then the surface represented by $\Tcal[\rho,f]$ is isotopic to $F$. In what follows, we will show how to reduce the number of type 1 arcs at the expense of removing a 1--handle or crosscap summand. Hence, $F_{\Tcal[\rho,f]}$ is obtained by adding 1--handles and crosscaps to an isotopic embedding of $F$.

First, we make sure that no consecutive\footnote{We consider the indices $n$ and $1$ to be consecutive as the tangles $T_n\cup \T_n$ bound disks in the multisected surface; see Definition~\ref{def:bridge_multisection}.} arcs are of type 1. Suppose, without loss of generality, that $\rho_1$, $\rho_2$, and $\rho_3$ are type 0, 1, and 1, respectively. Let $(\rho',f')$ be the tuple obtained by replacing $\rho_2$ from $\rho$ with a type 0 arc. We will see that $F_{\Tcal[\rho,f]}$ can be obtained from $F_{\Tcal[\rho',f']}$ by adding a 1--handle or (at most) one unknotted $RP^2$ summand. The first case we study is when $\rho_2\cup\rho_3$ connects distinct components of $L_2$. 
Near the disks $\Delta_1$ and $\Delta_2$, $F_{\Tcal[\rho,f]}$ looks like Part (d) of Figure \ref{fig:surgery_proof_1}. %Here, $F_{\Tcal[\rho,f]}$ looks like Part (d) of Figure \ref{fig:surgery_proof_1} near the disks $\Delta_1$ and $\Delta_2$. 
From Parts (a) and (b) of the same figure, we observe that $F_{\Tcal[\rho,f]}$ is obtained by adding one 1--handle to $F_{\Tcal[\rho',f']}$. 
Suppose now that $\rho_i\cup \rho_{i+1}$ connects the same component of $L_i$ as in Figure \ref{fig:surgery_proof_2}(c). Here, parts (d) and (e) depict the surgered surface obtained by choosing the framing $f_2+f_3$ to be 0 and $\pm 1$, respectively. One can see that changing the framing $f_2+f_3$ from zero to $\pm 1$ amounts to replacing a subdisk of $\Dcal_1\cup \Dcal_2$ with a M\"obius band. Thus, the surface $F_{\Tcal[\rho,f]}$ with $f_2+f_3=\pm 1$ is obtained by connect summing an unknotted $RP^2$ to the surface represented by $\Tcal[\rho,f]$ with $f_2+f_3=0$. In other words, we can assume that $f_2+f_3=0$. Now, from Parts (b) and (d) of Figure \ref{fig:surgery_proof_2}, we see that $F_{\Tcal[\rho,f]}$ is isotopic to $F_{\Tcal[\rho',f']}$ via an isotopy supported in a neighborhood of $\Delta_2$. 

From the process in the paragraph above, we are left with a tuple where no consecutive arcs are type 1. Without loss of generality, assume that $\rho_1$ is type 1 and $j$ is the smallest index $1<j<n$ with $\rho_j$ of type 1. Take $(\rho',f')$ to be the tuple obtained by replacing $\rho_j$ from $\rho$ with a type 0 arc. In Figure \ref{fig:surgery_proof_3}, we see that $F_{\Tcal[\rho,f]}$ is the result of adding a 1--handle to $F_{\Tcal[\rho',f']}$. %This finishes the proof. 
\end{proof}

%%%%%
\begin{figure}[h]
\centering
\includegraphics[width=.55\textwidth]{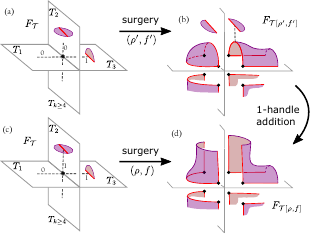}
\caption{Different choices of arcs yield surfaces related by 1--handle addition.}
\label{fig:surgery_proof_1}
\end{figure}
%%%%%

%%%%%
\begin{figure}[h]
\centering
\includegraphics[width=.7\textwidth]{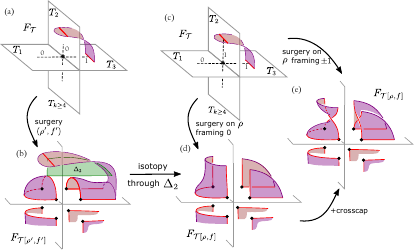}
\caption{Different choices of arcs yield surfaces related by at most one crosscap sum.}
\label{fig:surgery_proof_2}
\end{figure}
%%%%%

%%%%%
\begin{figure}[h]
\centering
\includegraphics[width=.55\textwidth]{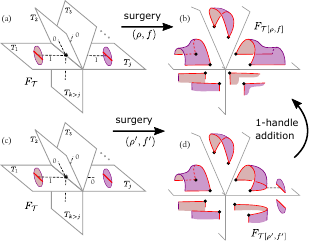}
\caption{Different choices of arcs yield surfaces related by 1--handle addition.}
\label{fig:surgery_proof_3}
\end{figure}
%%%%%

\begin{remark}[Counting 1--handles and crosscaps]\label{rem:counting_tubes}
Deciding how many 1--handles and crosscaps are added between $F_\Tcal$ and $F_{\Tcal[\rho,f]}$ is a combinatorial process. %This depends on the type of the arcs ($0$ or $1$) and the framings $f_i+f_{i+1}$ whenever $\rho_i\cup\rho_{i+1}$ connects the same component of $L_i$. For instance, s
Suppose we are given an admissible tuple $(\rho,f)$ with $\rho_1$ a type 0 arc. Then write a string in the characters $\{0,1,\overset{0}{=}, \overset{\pm}{=}, \neq\}$ by the following rules: 
(1) First write the types of the $\rho$--arcs in the order $\rho_1, \dots, \rho_n$. 
(2) Whenever two consecutive arcs are type 1, add a character between them. If $\rho_i\cup \rho_{i+1}$ connects the same components of $L_i$, add $\overset{0}{=}$ or $\overset{\pm}{=}$, depending if the framing $f_i+f_{i+1}$ is zero or $\pm 1$. If $\rho_{i}\cup \rho_{i+1}$ connects distinct components of $L_i$, add a $\neq$ symbol in between. 
For example, the string we obtain from the tuple in the 5-section diagram $\Tcal_3$ in Figure \ref{fig:k6_2} is \mbox{``$01\overset{0}{=} 1 \overset{\pm}{=} 1 \overset{0}{=} 1$''.} 

We now introduce the following rules that reduce the number of ones in our strings and correspond to specific 4-dimensional operations like 1--handle addition, crosscap sum, and isotopy. 
\begin{align}
\cdots 10\cdots 01\cdots &\overset{\text{1--h}}{\longleftarrow} \cdots 10\cdots 00\cdots \label{rule_1}
\\
% \]
% \[
\cdots 01\neq 1 \cdots &\overset{\text{1--h}}{\longleftarrow} \cdots 001\cdots \label{rule_2}
\\
% \]
% \[
\cdots 01\overset{\pm}{=} 1 \cdots &\overset{\# RP^2}{\longleftarrow} \cdots 001\cdots \label{rule_3}
\\
% \]
% \[
\cdots 01
\overset{0}{=}1 \cdots &\overset{\text{isotopy}}{\longleftarrow} \cdots 001\cdots \label{rule_4}
\end{align}

In Rule~\eqref{rule_1}, two non-consecutive type 1 arcs are separated by a string of some type zero arcs. The rule states that we can turn one of the type 1 arcs into a type 0 at the expense of removing a 1--handle from the multisected surface; this can be seen in Figure~\ref{fig:surgery_proof_3}. Rule~\eqref{rule_2} turns the substring $01\neq 1$ into $001$ at the expense of removing a 1--handle; this can be seen in Figure~\ref{fig:surgery_proof_1}. Rules~\eqref{rule_3} and \eqref{rule_4} explain how to replace the substring $01\overset{\varepsilon}{=}1$ with $001$ at the expense of removing a crosscap summand or without altering the isotopy class of the multisected surface; this is observed in Figure~\ref{fig:surgery_proof_2} depending on whether $\varepsilon =\pm 1$ or $0$. 

In practice, if the first tangle in our tuple $(\rho,f)$ is type 0, we can apply Rules~\eqref{rule_1}--\eqref{rule_4} to the corresponding string to get the word $100\cdots 0$. Here, one can see that for such a string, $\Tcal[\rho,f]$ and $\Tcal$ represent isotopic surfaces. 
%a word in $\{0,1\}$ with no consecutive ones. Here, by Example \ref{ex:more_tubes}, this final vector corresponds to $(t-1)$ 1--handles being added to $F_\Tcal$. 
\end{remark}

\begin{example}[Tubings]\label{ex:explain_tubings}
The admissible tuple in Example~\ref{ex:tubings} is described by the string $$1\neq 1 \neq \cdots \neq 1 0 \cdots 0$$ with $(m+1)$  consecutive 1's and $(n-m-1)$ 0's. Modulo cyclic permutation, this string can be reduced using Rule~\eqref{rule_2} to the string $100\cdots 0$. Thus, the band surgery described in Example~\ref{ex:tubings} changes the surface by $m$ 1--handle additions. 
\end{example}

\begin{example}[More tubings]\label{ex:explain_more}
The admissible tuple in Example~\ref{ex:more_tubes} is described by the string 
$$0\cdots 0 1 0 \cdots 0 1 0 \cdots 0 1 0 \cdots$$ 
where any pair of 1's is separated by at least one 0. Modulo cyclic permutation, this string can be reduced using Rule~\eqref{rule_1} to the string $100\cdots 0$. Thus, the band surgery described in Example~\ref{ex:more_tubes} changes the surface by as many 1--handle additions as one less than the number of type one arcs in $\rho$.
\end{example}

\begin{example}[Crosscap sums]\label{ex:explain_crosscap}
The admissible tuple in Example~\ref{ex:crosscaps} is described by the string 
$$1\overset{\pm}{=}1\overset{\pm}{=} \cdots \overset{\pm}{=}1 0 \cdots 0$$ with $(m+1)$ consecutive 1's and $(n-m-1)$ 0's. Modulo cyclic permutation, this string can be reduced using Rule~\eqref{rule_3} to the string $100\cdots 0$. Thus, the band surgery described in Example~\ref{ex:crosscaps} changes the surface by $m$ crosscap sums.
\end{example}

\begin{example}\label{fig:fig25}
We discuss how the admissible tuples in Figure \ref{fig:k6_2} change the multisected surfaces. 
From $\Tcal_3$ to $\Tcal_2$ we gain one crosscap as the sequence of reductions looks as follows, 
\[ 
01\overset{0}{=} 1 \overset{\pm}{=} 1 \overset{0}{=} 1 
\overset{\text{isotopy}}{\longleftarrow}
00 1 \overset{\pm}{=} 1 \overset{0}{=} 1 
\overset{\# RP^2}{\longleftarrow}
00 0 1 \overset{0}{=} 1 
\overset{\text{isotopy}}{\longleftarrow}
00 0 0 1 .
\]
From $\Tcal_2$ to $\Tcal_1$ we gain two 1--handles and one crosscap as the sequence of reductions looks as follows, 
\[ 
0 1 \neq 1 \overset{\pm}{=} 1 \neq 1 
\overset{\text{1-h}}{\longleftarrow}
0 0 1 \overset{\pm}{=} 1 \neq 1 
\overset{\# RP^2}{\longleftarrow}
0 0 0 1 \neq 1 
\overset{\text{1-h}}{\longleftarrow}
0 0 0 0 1.
\]
\end{example}

%%%%%%%%%%%%%%%%%%%%%%%%%%%%%%%
\section{Multiplane moves}\label{sec:multiplane_moves}
%{Moves that preserve the surface}
%{Applcation 2: Uniqueness of multiplane diagrams}
In this section, we show that any two multiplane diagrams describing the same embedded surface in $S^4$ are related by a finite sequence of multiplane moves. There are five types of moves: interior Reidemeister moves, mutual braid moves, multiple-sector perturbations, and split and merge moves. 

The first three moves are the multiplane versions of the triplane moves from \cite{MZ}. Fix a $b$--bridge multiplane diagram $\Tcal$. An \emph{interior Reidemeister move} is an isotopy of the tangles of $\Tcal$ that fixes the punctures. A \emph{mutual braid move} is the result of appending the same $2b$--stranded braid to each tangle of $\Tcal$. These correspond to ambient isotopies supported in a neighborhood of the central bridge surface. A \emph{multiple-sector perturbation} is the result of dragging the multisected surface $F$ through certain disk $\Delta$ embedded in multiple consecutive 4-dimensional sectors. This move is a particular band surgery of $\Tcal$ from Section \ref{sec:band_surgery} % along an admissible tuple as in Lemma \ref{lem:surgery_rhos}
%was introduced in Example \ref{ex:perturbation} 
and will be discussed in detail in Section \ref{sec:perturbation}. 

The last two types of multiplane moves, \emph{split} and \emph{merge}, change the number of sectors in a bridge multisection. They will be discussed in detail in Section \ref{sec:split_merge}. In Section \ref{sec:transfer}, we discuss a particular combination of the perturbations and merge moves that allow us to obtain a triplane diagram from any given multiplane diagram. This combination is key in proving our uniqueness statement for bridge multisections (Thm \ref{thm:main1}). 

\subsection{Multiple-sector perturbations} \label{sec:perturbation}
Let $\Tcal=(T_1,\dots, T_n)$ be a multiplane diagram for an embedded surface $F\subset S^4$. We say that a multiplane diagram $\Tcal'$ is a \emph{multiple-sector perturbation} of $\Tcal$ if $\Tcal'=\Tcal[\rho,f]$ for an admissible tuple of framed arcs $(\rho,f)$ satisfying the following conditions for some integer $0\leq m\leq n-2$
\begin{enumerate}
\item after a cyclic permutation of the indices, the type 1 arcs are $\rho_i$ for $i=1,\dots, m+1$, and % and type 0 otherwise, and
\item for $i=1,\dots, m$, $\rho_i\cup \rho_{i+1}$ connects the same component of $L_i$ with framing $f_i+f_{i+1}$ equal to zero.
\end{enumerate}
We may also refer to $\Tcal'$ as an \emph{$m$--sector perturbation} of $\Tcal$. Observe that $m$--sector perturbation corresponds to the band surgery in Example~\ref{ex:tubings} with $(m,r)=(m,0)$.

\begin{lemma}\label{lem:perturbations}
Let $\Tcal$ be a $(b;c_1,\dots, c_n)$--multiplane diagram for $F$. 
If $\Tcal'=\Tcal[\rho,f]$ is a multiple--sector perturbation of $\Tcal$, then $\Tcal'$ has underlying surface isotopic to $F$. Moreover, if the type 1 arcs of $\rho$ are $\rho_1,\dots, \rho_{m+1}$, then $\Tcal'$ has invariants 
\[(b;c_1, \dots, c_m, \dots, c_{n-1}, c_n)+\left(1;1,\dots, 1,\stackrel{-(m+1)-}{0},1,\dots, 1,\stackrel{-n-}{0}\right).\]
\end{lemma}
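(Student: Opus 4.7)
The plan is to decompose the claim into three parts---the bridge count, the patch numbers, and the isotopy statement---and handle each using the machinery from Section~\ref{sec:band_surgery}.

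For the bridge number, note that a multiple--sector perturbation is by definition a band surgery along an admissible tuple $(\rho,f)$, so Lemma~\ref{lem:surgery_rhos} immediately gives that $\Tcal'$ is the spine of a $(b+1)$--bridge multisected surface. This accounts for the ``$+1$'' in the leading coordinate.

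Next I would compute $c'_i = |L'_i|$ sector by sector, where $L'_i = T'_i \cup \overline{T'_{i+1}}$, by splitting into four cases according to the types of $\rho_i$ and $\rho_{i+1}$. \emph{Case (a):} for $i\in\{1,\dots,m\}$, both arcs are type~1, they connect the same component of $L_i$, and $f_i+f_{i+1}=0$; Lemma~\ref{lem:framing_condition}(1) then gives $c'_i = c_i+1$. \emph{Case (b):} for $i=m+1$, one arc is type~1 and the other is type~0; the proof of Lemma~\ref{lem:surgery_rhos} shows that such a surgery simply perturbs $L_i$, leaving the number of components unchanged, so $c'_{m+1} = c_{m+1}$. \emph{Case (c):} for $i\in\{m+2,\dots,n-1\}$, both arcs are type~0; the proof of Lemma~\ref{lem:surgery_rhos} shows the surgery adds a small 1--bridge unknot to $L_i$, so $c'_i = c_i + 1$. \emph{Case (d):} for the cyclic pair $(n,1)$, $\rho_n$ is type~0 and $\rho_1$ is type~1, so as in Case~(b), $c'_n = c_n$. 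These four increments match the stated formula $(0, +1, \dots, +1, \stackrel{-(m+1)-}{0}, +1, \dots, +1, \stackrel{-n-}{0})$ exactly.

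For the isotopy claim, I would invoke Example~\ref{ex:tubings}. The definition of a multiple--sector perturbation puts $(\rho,f)$ precisely in the setup of that example, and condition~(2) of the perturbation ensures that every consecutive type--1 pair $\rho_i\cup\rho_{i+1}$ (for $i=1,\dots,m$) connects the \emph{same} component of $L_i$. Thus the integer $r$ of Example~\ref{ex:tubings} is zero, so $F_{\Tcal'}$ is obtained from $F_\Tcal$ by adding zero 1--handles; equivalently, it is isotopic to $F$. Alternatively, one can run the reduction rules of Remark~\ref{rem:counting_tubes} on the string associated to $(\rho,f)$---which here contains no ``$\neq$'' or ``$\overset{\pm}{=}$'' characters---and observe that only isotopy moves are produced.

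The main technical point to be careful with is the precise effect of a surgery along an arc of one type glued to an arc of the opposite type (Cases (b) and (d)); the content there is entirely inside the proof of Lemma~\ref{lem:surgery_rhos} and amounts to verifying that the hybrid surgery model is an elementary perturbation of $L_i$. Once that is in hand, the rest of the argument is a bookkeeping exercise.
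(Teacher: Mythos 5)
Your proposal is correct, and the patch-number computation (your Cases (a)--(d)) matches the paper's own bookkeeping essentially word for word; the invocation of Lemma~\ref{lem:framing_condition} in Case~(a) is a useful precision that the paper states more tersely. The one genuine divergence is in the isotopy argument. You invoke Example~\ref{ex:tubings} with $r=0$ (equivalently, the string reduction of Remark~\ref{rem:counting_tubes}), whereas the paper constructs the isotopy explicitly by pushing $\Dcal_1\cup\cdots\cup\Dcal_m$ across the disks $\Delta_1\cup\cdots\cup\Delta_m$ of Lemma~\ref{lem:disk_from_arcs} into $X_{m+1}\cup\cdots\cup X_n$. Both arguments ultimately rest on the same figures (the proof of Theorem~\ref{thm:surgery_rhos_tubing}), so there is no circularity, but be aware that Example~\ref{ex:tubings} itself defers to ``more discussion in Section~\ref{sec:perturbation}'' for exactly the $r=0$ case; the paper closes that loop by giving the direct isotopy here rather than leaning back on the informal example. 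The explicit disk-pushing argument is the more self-contained route and is what actually backs the isotopy rule $\overset{0}{=}$ used in Remark~\ref{rem:counting_tubes}. You should also note that your Case~(a) is vacuous and Example~\ref{ex:tubings} does not apply when $m=0$ (it requires $m\geq 1$); in that degenerate case the isotopy follows from the base case of Theorem~\ref{thm:surgery_rhos_tubing} or Remark~\ref{rem:0_pert}, which is what your alternative via Remark~\ref{rem:counting_tubes} reduces to anyway.
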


\begin{proof}
After a cyclic permutation, we can assume that the first $m+1$ arcs of $\rho$ are type 1 and the rest are type zero. By Theorem \ref{thm:surgery_rhos_tubing}, we know that $\Tcal'$ is a multiplane diagram representing a surface obtained by adding 1--handles and crosscaps to $F$. We claim that no 1--handles nor crosscaps are needed. As $(\rho,f)$ is an admissible tuple, we can consider the disks $\Delta_1, \dots, \Delta_n$ as in Lemma \ref{lem:disk_from_arcs}. There is an isotopy between $F_{\Tcal[\rho,f]}$ and $F_{\Tcal}$ supported in a neighborhood of \mbox{$X_1\cup \dots \cup X_m$}. This consists in pushing \mbox{$\Dcal_1\cup \dots \cup \Dcal_m$} across $\Delta_1\cup \cdots \cup \Delta_m$ into \mbox{$X_{m+1}\cup \dots \cup X_n$} (see Figure \ref{fig:surgery_proof_2}(a)--(d)). 

The change of the patch numbers depends only on the types of the arcs $\rho_i$ and $\rho_{i+1}$. If $1\leq i \leq m$, 0-framed surgery along $\rho_i\cup \rho_{i+1}$ divides one component of $L_i$ into two, thus $c'_i=c_i + 1$. If $m+2\leq i \leq n-1$, then $\rho_i\cup \rho_{i+1}$ is disjoint from $L_i$ and so $L'_i$ gains a small 1--bridge unknotted component. Thus $c'_i=c_i+1$. If $i=m+1, n$, then $\rho_i\cup \rho_{i+1}$ intersects $L_i$ in one endpoint and so $L'_i$ is obtained by pushing $L_i$ through the bridge sphere guided by $\rho_i\cup \rho_{i+1}$. Thus, $L'_i$ is isotopic to $L_i$ and $c'_i=c_i$. 
\end{proof}

In Lemma \ref{lem:perturbations}, we push our surface $F$ along a disk in the first $m$ sectors, increasing the bridge and patch numbers ($b$ and $c_i$'s) by one, except for $c_{m+1}$ and $c_n$. If $n=3$ and $m=1$, i.e. a classic trisection perturbation, this process yields a $(b+1;c_1+1,c_2,c_3)$--bridge trisection \cite[Lem 6.1]{MZ}. Figures \ref{fig:quad_torus} and \ref{fig:quad_Klein} have examples of admissible arcs inducing two-sector perturbations. 

\begin{remark}[Cases $m\geq n-1$] Notice that $m$--sector perturbations are not defined for $m=n-1,n$. One can interpret an $m$--perturbation as transferring saddles of $F$ from the sectors $\Dcal_1,\dots, \Dcal_m$ to the rest of the sectors. Intuitively, if $m=n-1$, then the resulting surface $F'\cap X_n$ may not be a trivial disk system. If $m=n$, `perturbing' along a disk $\Delta\subset X_1\cup \dots \cup X_n$ can be interpreted as a compression of $F$ along $\Delta$, thus changing the knotted surface. 
\end{remark}

\begin{remark}[Case $m=0$]\label{rem:0_pert}
%0--Sector perturbations occur when exactly one arc of $\rho$ is type 1. 
0--Sector perturbations change the multisected surface by an isotopy supported in a neighborhood of one 3--ball $B_i$. They are determined by one type 1 arc regardless of the framing. Following the definition of band surgery, one can see that the set of non-trivial links appearing as a subset of a cross-section of $\Tcal$ is not affected by 0--sector perturbations. Although they are defined, 0--sector perturbations are redundant: if $\Tcal'$ is obtained from $\Tcal$ by a 0-sector perturbation then $\Tcal'$ is also obtained from $\Tcal$ by a \mbox{$(n-2)$--sector} perturbation. %In Section \ref{sec:connecting_multiplane_diagrams} we use them in our arguments. %exploit the fact that we only need to specify one type 1 arc to perform a 0--sector perturbation.
\end{remark}

%%%%%%%%%%%%%%%%%%%%%%%%%%
\subsection{Split and Merge}\label{sec:split_merge}
Let $\Tcal=(T_1,\dots, T_n)$ be a multiplane diagram and let $\Tcal'$ be the tuple obtained from $\Tcal$ by removing one tangle from $\Tcal$. We say that $\Tcal$ is obtained from $\Tcal'$ by \emph{merging a sector} if $\Tcal'$ is a multiplane diagram with $F_{\Tcal'}$ isotopic to $F_{\Tcal}$. It is important to note that removing an arbitrary tangle from a multiplane diagram may not result in a new multiplane diagram. % (see Definition [multiplane diagram]). 
Thus, not all multiplane diagrams admit a merge move. For instance, triplane diagrams of non-trivial 2--knots cannot be merged. The following is a criterion for when a merge move exists. 

\begin{lemma}\label{lem:merging}
Let $\Tcal=(T_1,T_2,T_3,\dots, T_n)$ be a multiplane diagram with $n\geq 3$. Suppose that $\Tcal_0=(T_1,T_2,T_3)$ is a $(b;c_1,c_2,c_3)$--triplane diagram for a $c_3$--component link of 2--spheres. If $\Tcal_0$ is completely decomposable, then $\Tcal'=(T_1,T_3,T_4,\dots, T_n)$ is a multiplane diagram with underlying surface isotopic to $F_\Tcal$. 
\end{lemma}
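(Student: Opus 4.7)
\textbf{Proof plan for Lemma \ref{lem:merging}.}
The plan is to verify that $\Tcal'$ is a valid multiplane diagram, and then to show that the two surfaces $F_{\Tcal}$ and $F_{\Tcal'}$ agree outside the first two sectors while the merged first sector gives an isotopic trivial disk system. First, I would check that consecutive tangles in $\Tcal'=(T_1,T_3,T_4,\dots,T_n)$ form unlinks. For the pairs $T_i\cup \T_{i+1}$ with $i\geq 3$, as well as $T_n\cup \T_1$, this is inherited directly from $\Tcal$. The only new consecutive pair is $T_1\cup \T_3$, which is an unlink because it is a (cyclic) consecutive pair in the triplane $\Tcal_0=(T_1,T_2,T_3)$.

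Next, I would organize the underlying surfaces. Let $\Dcal_1,\dots,\Dcal_n$ be the trivial disk systems of $F_\Tcal$, with $\Dcal_i\subset X_i$. Let $\Dcal'_1,\Dcal'_3,\dots,\Dcal'_n$ be the trivial disk systems of $F_{\Tcal'}$, where $\Dcal'_1$ lives in the 4-ball $X'_1=X_1\cup_{B_2}X_2$ (the union of two 4-balls along the common 3-ball $B_2$, which is itself a 4-ball) and $\Dcal'_i=\Dcal_i$ for $i\geq 3$. By Lemma \ref{lem:disk_system_uniqueness}, the disk systems $\Dcal'_i$ and $\Dcal_i$ agree up to isotopy rel boundary for $i\geq 3$, so the problem reduces to showing that the surface $\Dcal_1\cup \Dcal_2\subset X'_1$ is isotopic rel boundary to the trivial disk system $\Dcal'_1$.

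The heart of the argument is to establish that $\Dcal_1\cup\Dcal_2$ is itself a trivial disk system in $X'_1\cong B^4$, after which Lemma \ref{lem:disk_system_uniqueness} supplies the desired isotopy (rel boundary $T_1\cup \T_3$) automatically. To verify triviality, I would use the completely decomposable hypothesis on $\Tcal_0$. Writing $\Tcal_0$ as a distant sum of connected sums of triplane diagrams of bridge number at most three, and noting that $F_{\Tcal_0}$ is an unlink of 2--spheres, each summand must itself be a triplane diagram of an unknotted 2--sphere with at most three bridges. For each such base piece, one verifies directly (using the classification recalled in Lemma \ref{prop:Farey_triangles} and \cite[Sec 4.3]{MZ}) that the union of the two consecutive disk systems is a single trivial disk in the corresponding 4-ball. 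Both the distant sum and the pointed connected sum operations (Section \ref{sec:sums}) preserve this property, since they respect the 4-ball structure of each pair of merged sectors and tube or union trivial disk systems into trivial disk systems.

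The main obstacle I anticipate is the last step: cleanly handling the inductive bookkeeping on the decomposable structure, particularly ensuring that a connected sum taken at a puncture of the central surface really does produce a trivial disk system in the merged 4-ball of each summand rather than something that is merely homologically trivial. Once this step is in place, the conclusion follows: $\Dcal_1\cup\Dcal_2$ and $\Dcal'_1$ are two trivial disk systems in $X'_1$ with identical boundary, hence isotopic rel boundary by Lemma \ref{lem:disk_system_uniqueness}, and extending this local isotopy by the identity to the remaining sectors yields an ambient isotopy of $S^4$ sending $F_{\Tcal}$ to $F_{\Tcal'}$.
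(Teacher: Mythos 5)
Your proposal matches the paper's proof in both structure and key ideas: check that $T_1\cup \T_3$ is an unlink to verify $\Tcal'$ is a multiplane diagram, then show $\Dcal_1\cup\Dcal_2\subset X_1\cup_{B_2}X_2\cong B^4$ is a trivial disk system and invoke Lemma~\ref{lem:disk_system_uniqueness} to replace it by $\Dcal'_1$. The paper reaches the triviality of $\Dcal_1\cup\Dcal_2$ by appealing directly to the completely decomposable hypothesis to say both $\Dcal_1\cup\Dcal_2$ and $\Dcal_3$ are isotopic (rel boundary, in their respective 4-balls) to spanning surfaces for $T_1\cup\T_3$ in $S^3$, and then a component count (there are $c_3$ components in $F_{\Tcal_0}$ matching the $c_3$ disks of $\Dcal_3$) forces $\Dcal_1\cup\Dcal_2$ to be $c_3$ boundary-parallel disks; your version unwinds the same hypothesis more explicitly by splitting into base pieces of bridge number $\le 3$ and arguing that distant and connected sums preserve the "two consecutive sectors glue to a trivial disk system" property. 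These are the same argument presented at different levels of detail, and the potential obstacle you flag (that the sum operations genuinely preserve triviality rather than something weaker) is exactly what the paper's "isotopic to spanning surfaces" phrasing packages.
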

\begin{proof}
As $\Tcal_0$ is a triplane diagram, we know that $T_1\cup \T_3$ is a $c_3$--component unlink. Hence, the tuple $\Tcal'$ is a multiplane diagram. It remains to show that $F_{\Tcal'}$ is isotopic to $F_{\Tcal}$. Denote by $F_0$ the surface represented by $\Tcal_0$ and by $(S^4,F_0)=\cup_{i=1}^3 (X_i, \Dcal_i)$ 4-dimensional sectors of $F_0$. Note that $\Dcal_1$ and $\Dcal_2$ are also consecutive disk systems for the multisected surface $F_\Tcal$. The action of removing $T_2$ from $\Tcal$ corresponds to removing $(X_1,\Dcal_1)\cup_{T_2}(X_2,\Dcal_2)$ from $F_\Tcal$ and replacing it with $(X_3,\Dcal_3)$. As $\Tcal_0$ is completely decomposable, both $\Dcal_1\cup \Dcal_2$ and $\Dcal_3$ are isotopic (in the 4-ball) to spanning surfaces for the link $T_1\cup \T_3$. As we know $\Dcal_3$ is a collection of $c_3$ disks and $F_{\Tcal_0}$ has $c_3$ connected components, $\Dcal_1\cup \Dcal_2$ must be a trivial system of $c_3$ disks bounded by $T_1\cup \T_3$. In particular, the disks $\Dcal_1\cup \Dcal_2$ and $\Dcal_3$ are isotopic relative to the link $T_1\cup \T_3$. Hence, $F_{\Tcal'}$ and $F_{\Tcal}$ are isotopic. 
\end{proof}

We now translate the merging criterion in Lemma \ref{lem:merging} for merging a multisection in terms of dual bands. This may be used in practice for multiplane diagrams with many crossings. Recall that, by Lemma \ref{lem:dual_bands}, consecutive tangles in a multiplane diagram are related by surgery along dual bands. In particular, if three consecutive tangles of $\Tcal$ satisfy $T_1[v_1]=T_2$ and $T_2[v_2]=T_3$, then we can draw the bands $v_2$ in the same 3--ball as $T_1$ and $v_1$. If $v_2$ and $v_1$ share some endpoints, we choose to draw $v_2$ closer to the bridge surface than $v_1$ as in the top left of Figure \ref{fig:tansfer_bands}. We drew concrete examples of such bands in Figure \ref{fig:tansfer_spun}. 

\begin{lemma}\label{lem:merging_bands}
Let $\Tcal=(T_1,T_2,T_3,\dots, T_n)$ be a multiplane diagram with $n\geq 3$. Let $v_1$ and $v_2$ be bands dual to $T_1$ and $T_2$, respectively, satisfying, $T_2=T_1[v_1]$ and $T_3=T_2[v_2]$. If $v=v_1\cup v_2$ is a dual set of bands for $T_1$, then $\Tcal'=(T_1,T_3,T_4,\dots, T_n)$ is a multiplane diagram with underlying surface isotopic to $F_\Tcal$.
\end{lemma}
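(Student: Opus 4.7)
The plan is to mimic the proof of Lemma~\ref{lem:merging}, substituting the dual band hypothesis for the assumption of complete decomposability.

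First, I would verify that $\Tcal'$ is a multiplane diagram by checking that $T_1 \cup \T_3$ is an unlink. Since $v = v_1 \cup v_2$ is a dual band collection for $T_1$, its members have disjoint interiors, so the surgeries commute: $T_1[v] = T_1[v_1][v_2] = T_2[v_2] = T_3$. Lemma~\ref{lem:dual_bands}, applied to the dual collection $v$ of size $|v_1| + |v_2| = (b - c_1) + (b - c_2)$, implies $T_1 \cup \T_3$ is a $c'$-component unlink with $c' = c_1 + c_2 - b$. In particular, $\Tcal_0 := (T_1, T_2, T_3)$ is a $(b; c_1, c_2, c')$-triplane diagram.

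Second, I would show that the disk system $\Dcal_1 \cup \Dcal_2$ in the 4-ball $X' := X_1 \cup_\Sigma X_2$ is a trivial disk system bounded by $T_1 \cup \T_3$. An inclusion–exclusion computation yields $\chi(\Dcal_1 \cup \Dcal_2) = c_1 + c_2 - b = c'$, which equals the number of boundary components; hence $\Dcal_1 \cup \Dcal_2$ is a disjoint union of $c'$ disks. Triviality is then witnessed as follows: the dual band condition supplies bridge disks $D$ for $T_1$ in $B_1$ such that $\widehat{D} := D \cup v$ is a disjoint union of disks in $B_1 \subset \partial X'$, whose boundary realizes $T_1 \cup \T_3$. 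Pushing $\widehat{D}$ slightly into $X'$ produces a trivial disk system $\widehat{\Dcal}$, and a Morse-theoretic comparison (tracking bridge disks and bands as minima and saddles in each sector) identifies $\Dcal_1 \cup \Dcal_2$ with $\widehat{\Dcal}$ up to isotopy.

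Third, I would conclude via Lemma~\ref{lem:disk_system_uniqueness}: any trivial disk system in $X'$ bounded by $T_1 \cup \T_3$ is unique up to isotopy rel boundary. In $F_{\Tcal'}$, the 4-ball $X'$ carries such a disk system by construction, so it is isotopic to $\Dcal_1 \cup \Dcal_2$. Swapping the two yields an ambient isotopy from $F_\Tcal$ to $F_{\Tcal'}$.

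The main obstacle will be the Morse-theoretic identification in Step 2. This requires translating $v_2$ from $B_2$ into $B_1$ as described in the paragraph preceding the lemma, verifying that $v_1$ and $v_2$ can be made disjoint in $B_1$ and that $D$ is compatible with both, and then checking that the concatenation of the push-in of $D \cup v_1$ (which gives $\Dcal_1$) with the push-in of bridge disks for $T_2$ together with $v_2$ (which gives $\Dcal_2$) recovers the push-in of $\widehat{D}$.
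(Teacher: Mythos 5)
You take a genuinely different route from the paper. The paper's own proof first applies braid and interior Reidemeister moves so that $T_1\cup v$ is crossingless, observes that $T_1$, $T_2=T_1[v_1]$, and $T_3=T_1[v]$ are then all simultaneously crossingless, and concludes that $\Tcal_0=(T_1,T_2,T_3)$ is a completely decomposable triplane diagram; an Euler characteristic count shows $\chi(F_{\Tcal_0})=2(b-|v|)$, so $F_{\Tcal_0}$ is a $(b-|v|)$--component unlink of $2$--spheres, and the conclusion follows by invoking Lemma~\ref{lem:merging}. You instead bypass Lemma~\ref{lem:merging} entirely, proposing to directly exhibit $\Dcal_1\cup\Dcal_2$ as a trivial $c'$--patch disk system in $X_1\cup X_2$ (note: the gluing locus is $B_2$, not $\Sigma$) and then apply Lemma~\ref{lem:disk_system_uniqueness}. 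Your Step 1 and the Euler characteristic count in Step 2 are correct; the count does force $\Dcal_1\cup\Dcal_2$ to be a union of $c'$ disks once one notes that gluing two collections of disks along boundary arcs cannot create closed components, so each of the at most $c'$ components carries at least one boundary circle.

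The genuine gap, which you anticipate but do not close, is the \emph{triviality} of $\Dcal_1\cup\Dcal_2$. A first problem is factual: since the bands $v$ attach to $T_1$ and the bridge disks $D$ satisfy $\partial D=T_1\cup\tau_1$ with $\tau_1=D\cap\Sigma$, one has $\partial(D\cup v)=T_1[v]\cup\tau_1=T_3\cup\tau_1$, which is a system of arcs, not the closed unlink $T_1\cup\T_3$; pushing $D\cup v$ into $X_1\cup X_2$ therefore does not by itself give a candidate trivial disk system with the right boundary, and an extra doubling/completion step is needed. More substantially, the ``Morse-theoretic comparison identifying $\Dcal_1\cup\Dcal_2$ with $\widehat{\Dcal}$ up to isotopy'' is precisely the work that must be done, not a routine verification: $\Dcal_i$ is only specified up to isotopy rel $L_i$ inside $X_i$, so one must choose Morse representatives compatible with bridge disks for $T_1$ and $T_2$ and with the bands $v_1$ and $v_2$, and check that they glue correctly along $T_2$. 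The paper's crossingless/completely-decomposable step, packaged as Lemma~\ref{lem:merging}, encapsulates exactly this; your plan essentially re-derives Lemma~\ref{lem:merging} by hand but currently only sketches its hardest part. The route is plausible if those details are carried through, but as written the triviality argument is incomplete.
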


\begin{proof}
The fact that $v=v_1\cup v_2$ is dual to $T_1$ implies that $T_1\cup \T_3$ is a $(b-|v|)$--component unlink and so the triplet $\Tcal_0=(T_1,T_2,T_3)$ is a $(b;b-|v_1|,b-|v_2|,b-|v|)$--triplane diagram. Moreover, after braid and interior Reidemeister moves, the tangle and the bands $T_1\cup v$ can be crossingless as in Figure \ref{fig:dual_bands}. 
Thus, $T_2=T_1[v_1]$ and $T_3=T_1[v]$ are also crossingless tangles and so $\Tcal_0$ is completely decomposable with underlying surface $F_{\Tcal_0}$. As the Euler characteristic of $F_{\Tcal_0}$ is $2(b-|v|)$, $F_{\Tcal_0}$ must be a $(b-|v|)$--component unlink of unknotted 2--spheres. Hence, by Lemma \ref{lem:merging}, $\Tcal'$ is a multiplane diagram with $F_{\Tcal'}$ isotopic to $F_{\Tcal}$.
\end{proof}

The \emph{split move} is the opposite of the merge move. More precisely, let $\Tcal$ be a multiplane diagram and let $\Tcal'$ be the tuple obtained from $\Tcal$ by \emph{inserting} one tangle into $\Tcal$. We say that $\Tcal$ is obtained from $\Tcal'$ by \emph{splitting a sector} if $\Tcal'$ is a multiplane diagram with $F_{\Tcal'}$ isotopic to $F_{\Tcal}$. Lemmas \ref{lem:merging} and \ref{lem:merging_bands} can also be used as criteria for split moves. For example, Theorem \ref{thm:thin_multi} is a consequence of Lemma \ref{lem:merging_bands}. 

\begin{theorem}\label{thm:thin_multi}
Every surface $F$ in $S^4$ admits a thin bridge multisection. 
\end{theorem}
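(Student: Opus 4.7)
The plan is to start from an arbitrary bridge multisection $\Tcal=(T_1,\dots,T_n)$ of $F$ (existence is guaranteed by the constructions surveyed in Section~\ref{sec:multisections}) and reduce it to a thin multisection by iteratively applying two kinds of moves, both consequences of Lemma~\ref{lem:merging_bands} and its converse. Progress is measured by the \emph{thinness defect} $\delta(\Tcal):=\sum_{i=1}^n|b-1-c_i|$, a non-negative integer that vanishes precisely on thin multisections.

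For each sector with $c_i\leq b-2$, Lemma~\ref{lem:dual_bands} furnishes a collection of $k:=b-c_i\geq 2$ dual bands $v=v_1\cup\dots\cup v_k$ for $T_i$ with $T_{i+1}=T_i[v]$. Picking any single band $v_1$ and setting $T':=T_i[v_1]$, the crossingless normal form for dual bands (Figure~\ref{fig:dual_bands}) shows that $v_1$ alone is dual to $T_i$ and $v_2\cup\dots\cup v_k$ is dual to $T'$. The converse of Lemma~\ref{lem:merging_bands} then inserts $T'$ between $T_i$ and $T_{i+1}$ without altering $F$, producing sectors with patch numbers $b-1$ and $c_i+1\leq b-1$; hence $\delta$ strictly decreases by $1$.

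For each sector with $c_i=b$, the link $T_i\cup\T_{i+1}$ is a $b$-component unlink on $2b$ endpoints, so each component is a 1-bridge unknot. Consequently, the triplet $(T_{i-1},T_i,T_{i+1})$ is a $(b;c_{i-1},b,c_{i-1})$-triplane, which is completely decomposable by Proposition~\ref{prop:completely_decomp} and represents an unlink of 2-spheres. Lemma~\ref{lem:merging} then allows the deletion of $T_i$ from $\Tcal$; a direct computation of the new patch number between $T_{i-1}$ and $T_{i+1}$ (using the ``effective equality'' of $T_i$ and $T_{i+1}$) gives a strict decrease of $\delta$ by $1$ as well. Alternating between the two moves until $\delta=0$ produces a thin multisection.

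The main technical obstacle is verifying that a single band drawn from a dual set remains dual to the starting tangle while its complement becomes dual to the surgered tangle; this is transparent in the crossingless normal form of Figure~\ref{fig:dual_bands}, but a clean rigorous statement may require a short disk-rearrangement argument. A minor edge case arises if the process would terminate at a two-sector diagram with both patch numbers equal to $b$ (forcing $F$ to be an unlink of 2-spheres); this is handled by applying a single 0-sector perturbation (Lemma~\ref{lem:perturbations}), which increases $b$ by one while leaving the outer patch numbers $c_1$ and $c_n$ unchanged, immediately yielding a thin two-sector diagram.
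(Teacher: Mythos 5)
Your proposal is correct in substance but takes a genuinely different route from the paper's, which you should find reassuring since the two arguments share all the core machinery. The paper's proof first reduces to a bridge trisection $(T_1,T_2,T_3)$ (existence from Meier--Zupan), then does a one-shot split of all three sectors: Lemma~\ref{lem:dual_bands} gives a full dual-band set $v=\{v_1,\dots,v_{m_i}\}$ in each sector, the intermediate tangles $T_i^{(j+1)}=T_i^{(j)}[v_j]$ are inserted so consecutive tangles differ by a single band (hence thin), and the surface is verified to be unchanged by repeated application of Lemma~\ref{lem:merging_bands}. Your proof instead stays with an arbitrary $n$-section and iterates two local moves (single-band split on a sector with $c_i\le b-2$, merge on a sector with $c_i=b$) controlled by the potential $\delta$; the price is that you must treat the $c_i=b$ case (correctly, via the observation that $c_i=b$ forces $T_i=T_{i+1}$ by Lemma~\ref{lem:dual_bands} with zero bands, making $(T_{i-1},T_i,T_{i+1})$ a $(b;c_{i-1},b,c_{i-1})$-triplane to which Proposition~\ref{prop:completely_decomp} and Lemma~\ref{lem:merging} apply) and handle the $n=2$, $c_1=c_2=b$ edge case by a $0$-sector perturbation, which the paper sidesteps by going to a trisection first. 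The technical point you flag — that a single band drawn from a dual set is dual to the original tangle while the remaining bands are dual to the surgered tangle — is genuine, but it is present and needed implicitly in the paper's argument as well (it is exactly what makes consecutive pairs $T_i^{(j)},T_i^{(j+1)}$ differ by one \emph{dual} band and what lets Lemma~\ref{lem:merging_bands} be applied to undo the splits). Your crossingless normal-form argument is the right way to close it. Net: your version buys naturality (it works directly on any multisection without first collapsing to a trisection) at the cost of more bookkeeping and edge-case handling; the paper's reduction-first strategy is more succinct.
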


\begin{proof}
Let $\Tcal=(T_1,T_2,T_3)$ be a bridge trisection of $F$; this exists by \cite{MZ}. We will split each sector of $\Tcal$ as follows. By Lemma \ref{lem:dual_bands}, we can find a set $v=\{v_1,\dots, v_{m_1}\}$ of bands dual to $T_1$ satisfying $T_2=T_1[v]$. Define the tangles $T^{(0)}_1=T_1$ and $T^{(i+1)}_1=T^{(i)}_1[v_i]$ for $i=1,\dots m_1$. We do the same for the pairs of tangles $(T_2,T_3)$ and $(T_3,T_1)$ and define tangles $T^{(j)}_2$ and $T^{(l)}_3$ accordingly. We obtain a multiplane diagram $$\Tcal'=\left(T^{(0)}_1,\dots, T^{(m_1-1)}_1, T^{(0)}_2,\dots, T^{(m_2-1)}_2, T^{(0)}_3,\dots, T^{(m_3-1)}_3\right)$$ where consecutive tangles differ by one dual band, hence $\Tcal'$ is thin. After several applications of Lemma \ref{lem:merging_bands}, we can conclude that $F$ is the underlying surface of  $\Tcal'$.
\end{proof}

\begin{remark}[Motivation]
Thin multisections permit the study of knotted surfaces as loops in certain complexes. Some of Engelhardt's PhD dissertation discusses how loops in the pants complex of planar surfaces codify the same information as a thin bridge multisection. For closed 4-manifolds, the study of thin multisections was carried out by Islambouli in \cite{UPW}.
\end{remark}

In light of Theorem~\ref{thm:thin_multi}, one can study bounds on the number of sectors needed to achieve a thin bridge multisection of a given knotted surface. If $n=3$, it follows from Proposition~\ref{prop:completely_decomp_2} that thin bridge trisections represent unlinks of unknotted 2-spheres and at most one unknotted projective plane. The unknotted torus admits a thin bridge 4--section; see Figure~\ref{fig:quad_torus}. 

\begin{problem}
%Let $F\subset S^4$ be a knotted surface. 
Compute the smallest $n\geq 3$ such that $F\subset S^4$ admits a thin bridge $n$--section.
\end{problem}

%%%%%%%%%%%%%%%%%%%%%%%%%%%%%%%%%%%%%%
\subsection{Perturbation-merging combination} \label{sec:transfer}
Proposition \ref{prop:transfer_lem} combines multiple-sector perturbations and merging moves to reduce the number of sectors of any multiplane diagram with $n\geq 4$ sectors. In particular, one can always recover a triplane diagram from a given multiplane diagram. Figures \ref{fig:quad_torus} and \ref{fig:quad_Klein} are concrete examples of this process.

%%%%%
\begin{figure}[h]
\centering
\includegraphics[width=.45\textwidth]{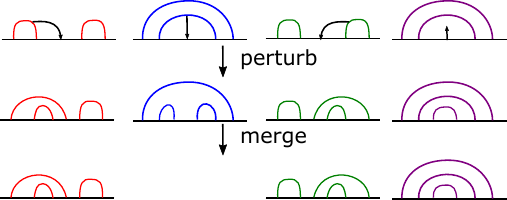}
\caption{Turning a $(2;1)$--bridge 4--section into a $(3;1)$--trisection of an unknotted torus.}
\label{fig:quad_torus}
\end{figure}
%%%%%
%%%%%
\begin{figure}[h]
\centering
\includegraphics[width=.45\textwidth]{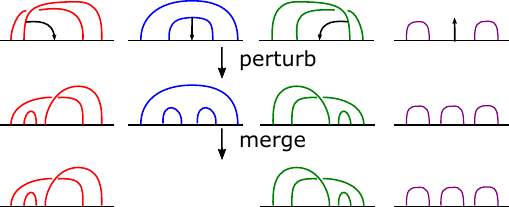}
\caption{Turning a $(2;1)$--bridge 4--section into a $(3;1)$--trisection of a Klein bottle.}
\label{fig:quad_Klein}
\end{figure}
%%%%%

\begin{proposition}\label{prop:transfer_lem}%[Transfer move]
Let $n\geq 4$ and let $\Tcal=(T_1,T_2,T_3,\dots, T_n)$ be a $(b;c_1,c_2,\dots, c_n)$--multiplane diagram. There exist $(b-c_2)$ two-sector perturbations such that the resulting multiplane diagram $(T'_1, T'_2, T'_3, \dots, T'_n)$ can be merged into a multisection with $(n-1)$ sectors $(T'_1, T'_3, T'_4, \dots, T'_n)$ with invariants equal to
   \[ \left(2b-c_2;c_1,c_3, c_4+(b-c_2), \dots, c_{n-1}+(b-c_2), c_n\right)\] 
\end{proposition}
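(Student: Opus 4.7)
\emph{Proof plan.} The strategy is to apply $(b-c_2)$ consecutive 2-sector perturbations, each with type 1 arcs supported in the tangles $T_1, T_2, T_3$, and then merge the resulting middle sector via Lemma \ref{lem:merging_bands}.

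By Lemma \ref{lem:perturbations}, a single 2-sector perturbation with type 1 arcs at $\rho_1, \rho_2, \rho_3$ increases the bridge number by one and every patch number by one except those at positions $3$ and $n$, which remain fixed. Iterating $k = b - c_2$ such perturbations yields a multiplane diagram $\Tcal' = (T'_1, \ldots, T'_n)$ with bridge $2b - c_2$ and patches
\[
\bigl(c_1 + (b-c_2),\; b,\; c_3,\; c_4+(b-c_2),\; \ldots,\; c_{n-1}+(b-c_2),\; c_n\bigr),
\]
so that the patches of sectors $i = 3, 4, \ldots, n$ already agree with the target invariants; only sectors $1$ and $2$ must be adjusted by the merge.

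For the merging, I would apply Lemma \ref{lem:dual_bands} to choose a dual set $v_1$ of $b - c_1$ bands for $T'_1$ with $T'_1[v_1] = T'_2$, and a dual set $v_2$ of $b - c_2$ bands for $T'_2$ with $T'_2[v_2] = T'_3$. After pushing $v_2$ across the bridge sphere into $B_1$ as described preceding Lemma \ref{lem:merging_bands}, the statement to prove is that the perturbations can be chosen so that $v_1 \cup v_2$ is a dual system of $2b - c_1 - c_2$ bands for $T'_1$. Granting this, Lemma \ref{lem:merging_bands} produces the merged multiplane $(T'_1, T'_3, T'_4, \ldots, T'_n)$ representing $F_{\Tcal}$, and its first new patch number equals $(2b-c_2) - |v_1 \cup v_2| = c_1$, as required.

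The main obstacle is arranging the perturbations so that $v_1 \cup v_2$ is genuinely dual. I expect to handle this inductively on $k = b - c_2$: when $k = 0$ one has $c_2 = b$, so $v_2$ is empty and $v_1$ alone is dual, providing the base case. For $k \geq 1$, select one of the $k$ bands of $v_2^{(k-1)}$ whose interaction with the bridge disks of $T_1^{(k-1)}$ obstructs joint duality, and design the $k$-th 2-sector perturbation so its type 1 arcs $\rho_1, \rho_2, \rho_3$ travel along a neighborhood of this band across the three tangles---the arc $\rho_2$ running along the band in $B_2$, the arcs $\rho_1, \rho_3$ its extensions into $B_1, B_3$, with framings chosen to make the perturbation trivial. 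The new strand added to each of $T_1, T_2, T_3$ then creates one additional dual slot in $T_1^{(k)}$ that absorbs the selected obstruction. After $k$ such perturbations all obstructions have been resolved, whereupon Lemma \ref{lem:merging_bands} applies. The delicate technical point is verifying that each perturbation strictly reduces the obstruction count, which I expect to mirror the concrete constructions illustrated in Figures \ref{fig:quad_torus} and \ref{fig:quad_Klein}.
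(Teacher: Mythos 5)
Your approach differs from the paper's official proof, though it coincides with a remark the paper makes immediately afterwards, where the same perturbation--merging process is re-explained using dual bands and Figure \ref{fig:tansfer_bands}. The paper's actual proof takes a different route. After the $(b-c_2)$ two-sector perturbations (done from a crossingless position, Figure \ref{fig:transfer_1}), it observes that $T'_1\cup\T'_3$ is isotopic to $T_1\cup\T_2$ (Figure \ref{fig:transfer_2}), so that $\Tcal_0=(T'_1,T'_2,T'_3)$ is a $(2b-c_2;\,c_1+b-c_2,\,b,\,c_1)$--triplane diagram whose Euler characteristic forces its underlying surface to be a $c_1$--component unlink of 2--spheres; it then certifies that $\Tcal_0$ is completely decomposable by deperturbing it $(b-c_2)$ times to a $(b;c_1,b,c_1)$ trisection covered by Proposition \ref{prop:completely_decomp} (Figure \ref{fig:transfer_3}); and it closes with Lemma \ref{lem:merging}, never invoking Lemma \ref{lem:merging_bands}. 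The advantage of that route is that ``completely decomposable'' is a coarser criterion than joint duality: one only has to exhibit a crossingless form and read off Proposition \ref{prop:completely_decomp}, rather than match every band of $v_1\cup v_2$ with a matching system of bridge disks. Your route insists on the sharper statement that $v_1\cup v_2$ is genuinely dual, which is what Lemma \ref{lem:merging_bands} actually requires.

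That is precisely where your proposal has a genuine gap, and you flag it yourself. You reduce the proposition to the claim that the $(b-c_2)$ perturbations can be chosen so that $v_1\cup v_2$ becomes dual to $T'_1$, and then outline an induction ``on $k=b-c_2$'' in which each perturbation ``strictly reduces the obstruction count.'' But the notion of obstruction is never defined, the claim of strict reduction is asserted rather than demonstrated, and the bookkeeping is slightly off: after $j$ perturbations one has $|v_2^{(j)}|=(b+j)-(c_2+j)=b-c_2$, which is constant in $j$, so it is not the number of bands in $v_2$ that decreases but the number whose endpoints still land on the original punctures and therefore may collide with the bridge disks of $T'_1$. Also, the framings in a $2$--sector perturbation are forced (the coupled framing must be zero whenever $\rho_i\cup\rho_{i+1}$ joins one component), so the phrase ``framings chosen to make the perturbation trivial'' suggests a freedom that is not there. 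The paper's remark gives the cleaner statement you should be aiming for: perform one perturbation per band of $v_2$, localized at a puncture adjacent to that band as in Figure \ref{fig:tansfer_bands}, so that after all $(b-c_2)$ perturbations every band of $v_2$ terminates on a freshly created puncture disjoint from the bridge-disk data coming from $v_1$. Without that precision, your induction does not close; as written it is a plan for a proof rather than a proof.
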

In the statement above, we merge the sectors $\Dcal'_1$ and $\Dcal'_2$. In particular, the entry with $c_2$ disappears, and all the invariants ($b$ and $c_i$'s) increase by $(b-c_2)$ except for $c_1$, $c_3$ and $c_n$. If $n=4$, this process would yield a $(2b-c_2;c_1,c_3,c_4)$--bridge trisection. 
\begin{proof}
The figures referenced in this proof depict the case $c_2=|T_2\cup \T_3|=1$; that being said, all of our arguments carry over to the case $c_2\geq 2$. After mutual braid moves and interior Reidemeister moves, we can represent $T_2$ and $T_3$ by a crossingless diagram as in Figure \ref{fig:transfer_1}. In this figure, the arcs (with blackboard framing) form admissible tuples for $\Tcal$. Thus, they induce a sequence of $(b-c_2)$ two-sector perturbations along disks in $X_1\cup X_2$. We denote by $\Tcal'=(T_1', \dots, T_n')$ the resulting bridge multisection. By Lemma \ref{lem:perturbations}, the invariants of $\Tcal'$ are given by the new tuple
\[(b;c_1, c_2, c_3, c_4, c_5, \dots, c_{n-1}, c_n)+(b-c_2)\cdot\left(1;1,1,\stackrel{-3-}{0},1,1,\dots, 1,\stackrel{-n-}{0}\right).\]

%%%%%
\begin{figure}[h]
\centering
\includegraphics[width=.7\textwidth]{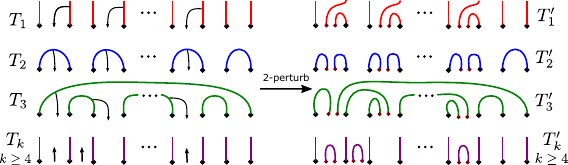}
\caption{Two--sector perturbations.}
\label{fig:transfer_1}
\end{figure}
%%%%%

We observe in Figure \ref{fig:transfer_2} that the link $T'_1\cup \T'_3$ is isotopic to $T_1\cup \T_2$. Thus, \mbox{$\Tcal_0=(T'_1, T'_2, T'_3)$} is a \mbox{$(2b-c_2;c_1+b-c_2, b, c_1)$--}triplane diagram. 
%$(b+(b-c_2);c_1+ (b-c_2), b, c_1)$--bridge trisection.
Via an Euler characteristic computation, we see that the underlying surface for $\Tcal_0$ is a $c_1$--component link of 2--spheres. To apply Lemma \ref{lem:merging}, we must verify that $\Tcal_0$ is completely decomposable. In Figure \ref{fig:transfer_3} we show a sequence of $(b-c_2)$ 1-sector deperturbations that turn $\Tcal_0$ into a $(b;c_1,b,c_1)$--bridge trisection. By Proposition \ref{prop:completely_decomp}, the resulting trisection is completely decomposable and so is $\Tcal_0$. Lemma \ref{lem:merging} implies then that $(T'_1, T'_3, T'_4, \dots, T'_n)$ is a multiplane diagram with underlying surface $F_{\Tcal}$, as desired. 
    %%%%%
\begin{figure}[h]
\centering
\includegraphics[width=.6\textwidth]{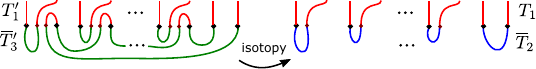}
\caption{The links $T'_1\cup \T'_3$ and $T_1\cup \T_2$ are isotopic.}
\label{fig:transfer_2}
\end{figure}
%%%%%
%%%%%
\begin{figure}[h]
\centering
\includegraphics[width=.95\textwidth]{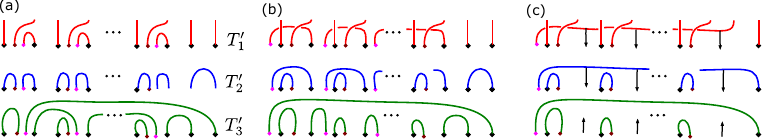}
\caption{From (a) to (b), one slides the purple punctures to the left. The resulting triplane diagram admits $(b-c_2)$ deperturbations turning (b) into (c).}
\label{fig:transfer_3}
\end{figure}
%%%%%
%
\end{proof} 

%%%%%%%%%%%%%%%%%%%%%%%%%%%%%%%%%%%%%%%%%%%%%
%\subsection{Transfer move with bands}$\quad$

We now explain the perturbation-merging process in Proposition \ref{prop:transfer_lem} using bands instead of standard pairs. This may be useful in practice as dual bands can guide the necessary 2--sector perturbations. Figure \ref{fig:tansfer_spun} is an example of such a procedure. In there, we turn the $(3;2)$--bridge 4--sections of $m$--twist spun 2--bridge knots from Proposition \ref{prop:multisec_spun} into the $(4;2)$--bridge trisections described by Meier and Zupan in \cite{MZ}.

%%%%%
\begin{figure}[h]
\centering
\includegraphics[width=.5\textwidth]{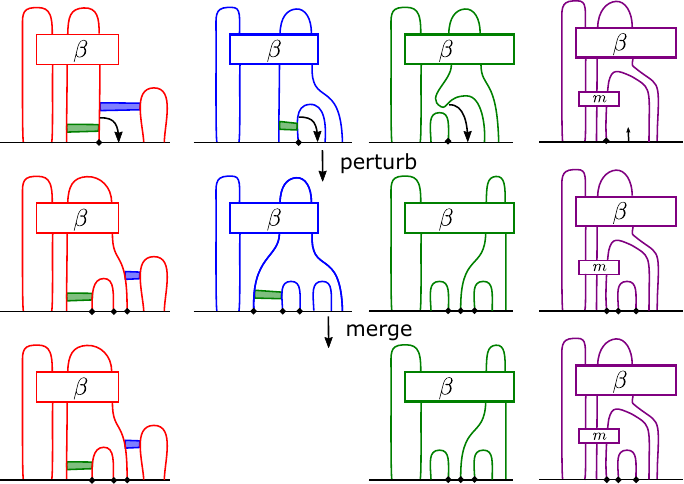}
\caption{Turning a $(3;2)$--bridge 4--section into a $(4;2)$--trisection of an $m$--twist spun knot.}
\label{fig:tansfer_spun}
\end{figure}
%%%%%

Let $\Tcal=(T_1,\dots, T_N)$ be a multiplane diagram with $N\geq 4$. Let $v_1$ and $v_2$ be bands dual to $T_1$ and $T_2$, respectively, satisfying $T_2=T_1[v_1]$ and $T_3=T_2[v_2]$. 
By the duality condition, the cores of the bands $v_1$ and $v_2$ are embedded arcs in the bridge surface $\Sigma$. So we can draw $v_2$ as bands for the tangle $T_1$ with cores lying in a copy of $\Sigma$ ``lower'' than the one where the bands of $v_1$ lie; see top left of Figure \ref{fig:tansfer_bands}. 
Then, for each band of $v_2$, we pick one puncture of $\Sigma$ adjacent to $v_2$ and modify the tangles in $\Tcal$ as in Figure \ref{fig:tansfer_bands}. One can check that the bands $v=v_1\cup v_2$ are dual to the new tangle $T'_1$. Thus, the new tuple $\left(T'_1,T'_2=T'_1[v_1],T'_3=T'_2[v_2]\right)$ satisfies the condition of Lemma \ref{lem:merging_bands}. After merging the first two sectors, we obtain a multiplane diagram $\Tcal'=(T'_1,T'_3,T'_4,\dots, T'_N)$ with $F_{\Tcal'}=F_{\Tcal}$.
%%%%%
\begin{figure}[h]
\centering
\includegraphics[width=.35\textwidth]{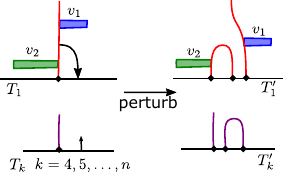}
\caption{Local modification around the endpoint of a band in $v_2$.}
\label{fig:tansfer_bands}
\end{figure}
%%%%%

%%%%%%%%%%%%%%%%%%%%%%%%%%%%%%%%%%%%%%%%%%%%%%%%%%%%%%%%%
We are ready to prove the main theorem of this section. 

\begin{theorem}\label{thm:main1}
Let $\Tcal_1$ and $\Tcal_2$ be two multiplane diagrams representing isotopic surfaces in $S^4$. There is a finite sequence of multiplane moves that turns $\Tcal_1$ into $\Tcal_2$. 
% Any two multiplane diagrams of a given knotted surface are related by a finite sequence of multiplane moves. 
\end{theorem}
\begin{proof}
Let $\Tcal_1$ and $\Tcal_2$ be two multiplane diagrams with isotopic underlying surfaces. Proposition \ref{prop:transfer_lem} applied to each $\Tcal_i$ gives a sequence of two-sector perturbations and merging moves turning $\Tcal_i$ into a triplane diagram $\Tcal'_i$. To end, by \cite[Thm 1.7]{MZ}, there is a finite sequence of triplane moves (thus multiplane moves) relating $\Tcal'_1$ with $\Tcal'_2$. 
\end{proof}

% {\color{red} Rant about the necessity of the merging/splitting move.} Work of J. Williams [CITE] suggests that one could go from two multisections with the number of sectors by a sequence of multiplane moves that do not change the number of 4-dimensional sectors. That said, proof of such a claim is needed.

% {\color{red} wonder if} all of the multiplane moves are necessary. Split/merging? ... What about multiple-sector perturbations? 

%%%%%%%%%%%%%%%%%%%%%%%%%%%%%%%%%%%%%%%%%%%
\section{Band surgeries are unknotting moves}
%{Application 3: Connecting multiplane diagrams}
\label{sec:connecting_multiplane_diagrams}

This section shows how to connect two multiplane diagrams for possibly distinct surfaces. The technical bit is Proposition \ref{prop:multisecting_tubes}, where we show that any 1--handle addition can be achieved via band surgeries on multisection diagrams. 

\begin{proposition}\label{prop:multisecting_tubes}
Let $\Tcal$ be a multiplane diagram representing $F\subset S^4$. Assume that $F'$ is obtained by 1--handle addition on $F$. Then there is a multiplane diagram $\Tcal'$ and admissible arcs $(\rho,f)$ for $\Tcal'$ such that 
\begin{itemize}
    \item $\Tcal'$ is obtained from $\Tcal$ by a sequence of 0--sector perturbations, and 
    \item $\Tcal'[\rho,f]$ is a multiplane diagram for $F'$.
\end{itemize}
\end{proposition}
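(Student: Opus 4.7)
The plan is to isotope the framed guiding arc $\gamma$ of the 1--handle into a position compatible with the multisection, and then realize the 1--handle addition as a band surgery of the form in Example \ref{ex:tubings} with $m = 1$ and $r = 1$. First, I slide the endpoints $p, q$ of $\gamma$ along $F$ so that $p$ lies in the interior of some tangle $T_i$ and $q$ lies in the interior of the cyclically consecutive tangle $T_{i+1}$. Such sliding is possible because each puncture of $\Sigma$ is a shared endpoint of every tangle, giving $F$ a local sector structure around every bridge, and any two guiding arcs related by sliding endpoints along $F$ produce isotopic 1--handle additions. Next, because $B_i$ and $B_{i+1}$ both lie in $\partial X_i$, I isotope $\gamma$ rel endpoints into $\mathrm{int}(X_i)$, keeping its interior disjoint from $\Dcal_i$; this uses only that $X_i$ is a 4--ball and $\Dcal_i$ is trivial. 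The framing of $\gamma$ can then be adjusted by twists inside $X_i$ to match the prescribed framing $f_\gamma$.

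With this placement, I choose a point $* \in \mathrm{int}(\Sigma)$ disjoint from the punctures. I define type 1 arcs $\rho_i \subset B_i$ from $*$ to $p$ and $\rho_{i+1} \subset B_{i+1}$ from $*$ to $q$, together with type 0 arcs $\rho_k \subset B_k$ descending from $*$ into $\mathrm{int}(B_k)$ away from $T_k$ for $k \neq i, i+1$. When $p$ and $q$ lie on distinct components of $L_i = T_i \cup \overline{T}_{i+1}$, condition \ref{condition:rho_distinct_components} of Definition \ref{def:admissible_arcs} is satisfied using separating 2--spheres guaranteed by the unlinkedness of $L_i$ together with a general-position argument that arranges disjointness from $\rho_i \cup \rho_{i+1}$ except at endpoints. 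Choose framings $f_k$ agreeing at $*$ with $f_i + f_{i+1}$ equal to $f_\gamma$; by Lemma \ref{lem:framing_condition}(2) any framing is admissible in the distinct-component case. By Example \ref{ex:tubings}, $\Tcal[\rho, f]$ is then a multiplane diagram for $F$ with one 1--handle added, which is $F'$.

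The main obstacle is ensuring that $p$ and $q$ lie on distinct components of $L_i$, since naively sliding endpoints along $F$ may land them on the same component of the unlink. This is precisely where the 0--sector perturbations enter: by applying suitable 0--sector perturbations on intermediate tangles, I insert new unknotted bridges and restructure $L_i$ so as to split the offending component, separating $p$ and $q$. A 0--sector perturbation leaves $F$ unchanged up to isotopy (Remark \ref{rem:0_pert}) but modifies the combinatorial link structure along the spine, so a finite sequence produces the required multiplane diagram $\Tcal'$. The technical heart of the argument is verifying that such perturbations can always be chosen to achieve the desired splitting while preserving the other admissibility conditions, and that the resulting arcs inherit the correct framing data from $f_\gamma$.
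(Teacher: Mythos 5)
The central step of your argument --- ``I isotope $\gamma$ rel endpoints into $\mathrm{int}(X_i)$, keeping its interior disjoint from $\Dcal_i$; this uses only that $X_i$ is a 4--ball and $\Dcal_i$ is trivial'' --- is a genuine gap, and it is precisely the difficulty the paper's proof is built to overcome. The fact that $X_i$ is a $4$--ball tells you that \emph{some} arc from $p$ to $q$ exists inside $X_i$, but it does not tell you that \emph{your} $\gamma$, which a priori threads through several consecutive sectors $X_1,\dots,X_n$, is ambient isotopic rel $F$ to such an arc. Isotopy classes of arcs in $S^4$ rel $F$ with fixed endpoints are measured by $\pi_1(S^4\setminus F)$, and there is no reason for the image of $\pi_1(X_i\setminus\Dcal_i)\to\pi_1(S^4\setminus F)$ to be everything; when $F$ is knotted these groups are generally quite different. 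Different isotopy classes of guiding arcs produce genuinely different surfaces $F'$, so replacing $\gamma$ by an arbitrary arc in a single sector is not legitimate. A symptom that something has gone wrong: if this replacement were always possible, the $0$--sector perturbations in the statement of the proposition would be unnecessary. In the paper's proof the core work is to keep track of how $t$ passes through every $B_i$, normalize the critical points of $t\cap B_i$, and then use a sequence of $0$--sector perturbations to shorten the passage of $t$ one sector at a time, until only two sectors remain. That sequence of perturbations is not optional bookkeeping; it records the winding of $\gamma$ through the multisection.

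There is a second, related gap. By placing both type-$1$ arcs in cyclically adjacent tangles $T_i, T_{i+1}$, you land yourself in the admissibility conditions~\eqref{condition:rho_same_components}--\eqref{condition:rho_distinct_components} of Definition~\ref{def:admissible_arcs}, which must be verified and cannot always be arranged for free. You acknowledge this under ``the main obstacle'' and propose to fix it with unspecified $0$--sector perturbations that ``split the offending component,'' but this step is not argued. The paper, by contrast, tries hard to make the two remaining type-$1$ arcs \emph{non}-adjacent (its $j\neq 2,n$ case), where admissibility is automatic (Example~\ref{ex:more_tubes}); only in the leftover adjacent case $j=2,n$ does it need the extra perturbations of Figure~\ref{fig:multisecting_tubes_2}, and even there it must isotope the guiding arc so that its two ends meet freshly created $1$--bridge unknot components and must then invoke Lemma~\ref{lem:framing_condition}. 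None of that machinery appears in your proposal. In short, the part of the argument you flag as ``the technical heart'' is where the paper actually does the work, and your proof skips both it and the more fundamental issue of moving $\gamma$ into a single sector.
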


\begin{proof}
Let $t$ be the guiding arc of the 1--handle addition between $F$ and $F'$. By definition, $t$ is an embedded arc in $S^4$ intersecting $F$ only in its endpoints. As the spine of $\Tcal$ is the 1--skeleton of $F$, we can slide the endpoints of $t$ to lie in distinct tangles. The first step of the proof is to isotope $t$ to intersect each tangle in a particular way--see conditions (a)--(c) below. 

Let $N(\Sigma)$ be a small tubular neighborhood of $\Sigma$ in $S^4$. We choose $N(\Sigma)$ so that $N(\Sigma)\cap F$ is the disjoint union of $2b$ disks around the bridge points. For each $i=1,\dots, n$, denote by $N_i(\Sigma)=N(\Sigma)\cap B_i$ the a small neighborhood of $\Sigma$ in $B_i$ with boundary $\Sigma \cup \Sigma_i$. Note that $\Sigma_i$ intersects $T_i$ in $2b$ points and the subset of $T_i$ inside $B_i\setminus int(N(\Sigma))$ is still a $b$--bridge trivial tangle. By construction, the spheres $\Sigma_1, \dots, \Sigma_n$ are a subset of $\partial N(\Sigma)\approx \Sigma\times S^1$. 

%%%%%
\begin{figure}[ht]
\centering
\includegraphics[width=.6\textwidth]{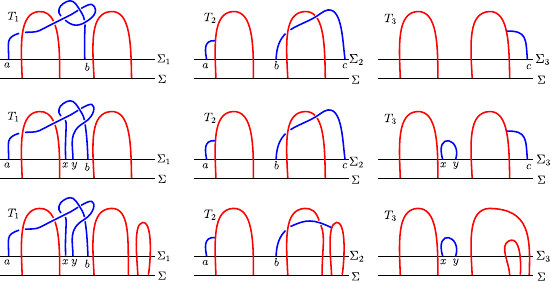}
\caption{Top row is a guiding arc $t$ that intersects a triplane diagram with conditions (a) and (b). From top to middle, we dragged the minimum of the blue arc in $T_1$ towards $\Sigma_1$. Thus, getting red/blue tangles with only maxima satisfying (c). From middle to bottom, we perform a 0--sector perturbation using as the type 1 arc the blue arc connecting the point $c$ with the tangle $T_3$.}
\label{fig:multisecting_tubes_1}
\end{figure}
%%%%%

By transversality, $t$ can be isotoped to be disjoint from the central surface $\Sigma$; thus $t\cap int(N(\Sigma))=\emptyset$. As arcs in each 4-ball $X_i$ can be pushed towards $\partial X_i$, we can isotope $t$ to satisfy the following conditions:
% \begin{itemize}
% \item[(a)] 
\textbf{(a)} $t$ intersects each $B_i$ in arcs embedded in $B_i\setminus int(N_i(\Sigma))$ with endpoints in $\Sigma_i$ or $T_i$, and 
% \item [(b)] 
\textbf{(b)} the endpoints of $t$ in $\Sigma_i$ are connected between each other by arcs in $\partial N(\Sigma)$ of the form $\{pt\}\times I$ for some interval $I$ of $S^1$. 
% \end{itemize}
We depict an example of this situation at the top of Figure \ref{fig:multisecting_tubes_1}. In the figure, we labeled the points in $t\cap \Sigma_i$ with the same symbol whenever product arcs in $\partial N(\Sigma)$ connect them. 

We can further isotope the arcs in $t\cap B_i$, at the expense of increasing the intersections $t\cap \Sigma_i$, so that \textbf{(c)} $t\cap B_i$ is a collection of boundary parallel arcs in $B_i-int(N(\Sigma))$ and at most two increasing arcs as in the second row of Figure \ref{fig:multisecting_tubes_1}. This can be achieved by making the arcs $t\cap B_i$ Morse and pushing all their minima towards $\Sigma_i$. Each minimum will split an arc of $t\cap T_i$ into two arcs with fewer critical points, creating a pair of points in $t\cap \Sigma_i$, and one arc with one maximum point in the tangle $t\cap B_{i\pm 1}$--see new points $x$ and $y$ in Figure \ref{fig:multisecting_tubes_1}.

The second step in the proof is to use the arcs in $t\cap \left(\cup_{i=1}^n B_i\right)$ to guide a sequence of 0--sector perturbations of $\Tcal$. As mentioned in Remark \ref{rem:0_pert}, 0--perturbations are determined by one type 1 arc with arbitrary framing. The first type 1 arc we use is one from $t\cap \left(\cup_{i=1}^n B_i\right)$ that intersects $\Sigma_i$ and $T_i$ once. After the 0--perturbation, the guiding arc $t$ will intersect $\cup_{i=1}^n B_i$ in one less arc. The third row of Figure \ref{fig:multisecting_tubes_1} exemplifies how one can begin this process. We continue the sequence of 0--perturbations until we are left with a guiding arc, denoted again by $t$, that intersects the interior of exactly two distinct 3--balls, say $B_1$ and $B_j$ for some $j\neq 1$. The arc $t\cap B_1$ (resp. $t\cap B_j$) has no critical points and connects one point in $\Sigma$ 
%$\Sigma_1$ (resp. $\Sigma_j$) 
with one point of $T_1$ (resp. $T_j$). 
We have two options, depending on whether $j=2,n$ or not. 
If $j\neq 2,n$, we can consider a tuple of arcs $\rho$ with exactly two type 1 arcs: $\rho_1= t\cap B_1$ and $\rho_j= t\cap B_j$. As $\rho_1$ and $\rho_j$ are not consecutive arcs, $(\rho,f)$ is an admissible tuple regardless of the framing $f$. Moreover, by Example \ref{ex:more_tubes}, banded surgery along $(\rho,f)$ represents a 1--handle addition to $F$ with guiding arcs $\rho_1\cup \rho_j=t$. 

To end, suppose that $j=2$ (the same argument works for $j=n$). The tuple of arcs $\rho$ defined in the previous paragraph may not be admissible, or $\rho_1\cup\rho_2$ may connect the same component of $T_1\cup \T_2$. To ensure the admissibility condition, we perform additional multiple-sector perturbations described in Figure \ref{fig:multisecting_tubes_2}. First, we use the increasing arcs $t\cap B_1$ and $t\cap B_2$ to guide two consecutive 0--sector perturbations. We follow this with consecutive 1--sector perturbations as in Figure \ref{fig:multisecting_tubes_2}. The link $T_1\cup \T_2$ for the resulting multiplane diagram has two new 1--bridge unknots (obtained from the 1--sector perturbations). Moreover, the guiding arc $t$ can be slid to connect those two new unknots as in Figure \ref{fig:multisecting_tubes_2}. In particular, we obtain an admissible tuple of $\rho$ with $\rho_1=t\cap B_1$, $\rho_2=t\cap B_2$, and $\rho_i$ type 0 for $i\neq 1,2$. Since $\rho_1$ and $\rho_2$ now connect different components of the link $T_1\cup \T_2$, any choice of framing for $\rho_1$ and $\rho_2$ will make the tuple $(\rho,f)$ admissible. %By Lemma \ref{lem:framing_condition}, $(\rho,f)$ is an admissible tuple for any framing $f$. 
By Example \ref{ex:tubings}, banded surgery along $(\rho,f)$ represents a 1--handle addition to $F$ with guiding arcs $\rho_1\cup \rho_2=t$. This finishes the proof of the proposition.
\end{proof}
%%%%%
\begin{figure}[h]
\centering
\includegraphics[width=.7\textwidth]{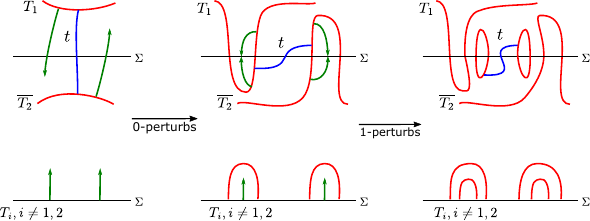}
\caption{Sequence of perturbations to make $t=\rho_1\cup \rho_2$ an admissible pair of arcs.}
\label{fig:multisecting_tubes_2}
\end{figure}
%%%%%

We are ready to prove the main results of this section. This relies heavily on \cite[Thm 2.3]{HK79b} which states that any surface in $S^4$ can be unknotted by adding enough 1--handles. In particular, any pair of surfaces can be related by a sequence of 1--handle additions, connected sums with unknotted projective spaces, and their inverses. 

\begin{theorem}\label{thm:unknotting}
Let $\Tcal$ be an arbitrary multiplane diagram. There exists a finite sequence of band surgeries that turn $\Tcal$ into a multiplane diagram of an unknotted surface.   
\end{theorem}
\begin{proof}
By \cite[Thm 2.3]{HK79b}, we know that any surface in $S^4$ can be unknotted by adding enough 1--handles. Proposition \ref{prop:multisecting_tubes} ensures that each 1--handle addition can be performed via finitely many band surgeries to $\Tcal$. 
\end{proof}

\begin{theorem}\label{thm:uniqueness}
Any two multiplane diagrams are related by a finite sequence of multiplane moves, band surgeries, and their inverses. 
% Let $\Tcal_0$ and $\Tcal_1$ be two multiplane diagrams. There is a finite sequence of multiplane diagrams, starting with $\Tcal_0$ and ending with $\Tcal_1$ such that 
\end{theorem}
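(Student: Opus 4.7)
The plan is to bridge the two given diagrams through a common unknotted target and then apply Theorem~\ref{thm:main1}.

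First, apply Theorem~\ref{thm:unknotting} to $\Tcal_1$ and $\Tcal_2$ separately, producing, after finitely many band surgeries, multiplane diagrams $\Tcal_1^u$ and $\Tcal_2^u$ whose underlying surfaces $U_1$ and $U_2$ are both unknotted. In general $U_1$ and $U_2$ are not isotopic, so the next step is to enlarge each to a common unknotted surface $U$. By the classification of closed surfaces, any two unknotted surfaces admit a common enlargement obtained from each by finitely many 1--handle additions (to merge components or raise genus) and crosscap sums (to match orientability types and adjust Euler characteristic); for instance, one may take $U$ to be a connected non-orientable unknotted surface whose non-orientable genus is sufficiently large. By Lemma~\ref{lem:tubing_preserves_unknot} together with the definition of unknotted, unknottedness is preserved at every step of such an enlargement.

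Next, realize each 1--handle addition and crosscap sum in the enlargement by band surgeries on the corresponding multiplane diagram. For 1--handle additions this is exactly Proposition~\ref{prop:multisecting_tubes}. For crosscap sums, an analogous argument applies: given a guiding arc for the crosscap, isotope it to meet the tangles as in conditions (a)--(c) of the proof of Proposition~\ref{prop:multisecting_tubes}, use 0--sector perturbations to collect its bridge intersections into a single pair of consecutive sectors whose arcs connect the same component of the intermediate link $L_i$, and then perform band surgery with framing $f_i + f_{i+1} = \pm 1$; by Example~\ref{ex:crosscaps} this modifies the surface by the prescribed crosscap summand. After these band surgeries, we obtain diagrams $\Tcal_1^{\ast}$ and $\Tcal_2^{\ast}$ reached from $\Tcal_1^u$ and $\Tcal_2^u$ respectively, with $F_{\Tcal_1^{\ast}} = F_{\Tcal_2^{\ast}} = U$.

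Finally, because $\Tcal_1^{\ast}$ and $\Tcal_2^{\ast}$ represent isotopic surfaces, Theorem~\ref{thm:main1} provides a finite sequence of multiplane moves relating them. Concatenating the forward band surgeries $\Tcal_1 \to \Tcal_1^u \to \Tcal_1^{\ast}$, the multiplane moves $\Tcal_1^{\ast} \to \Tcal_2^{\ast}$, and the inverse band surgeries $\Tcal_2^{\ast} \to \Tcal_2^u \to \Tcal_2$ yields the desired sequence. The main obstacle is establishing the crosscap analog of Proposition~\ref{prop:multisecting_tubes}: one must show that any prescribed crosscap sum on $F_\Tcal$ can be realized by a band surgery along admissible arcs $(\rho, f)$, after suitable 0--sector perturbations. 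The argument should parallel the proof of Proposition~\ref{prop:multisecting_tubes}, with the guiding arc replaced by a loop based on $F$ and with the final arrangement forcing the two type~1 arcs to connect the same component of $L_i$, so that Lemma~\ref{lem:framing_condition} and Example~\ref{ex:crosscaps} apply with framing $\pm 1$ in place of $0$.
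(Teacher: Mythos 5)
Your proof is correct and takes essentially the same route as the paper: both rely on \cite[Thm~2.3]{HK79b} to produce a chain of $1$--handle additions, crosscap sums, and their inverses connecting the two surfaces (the paper does this directly; you go through unknotted intermediates $U_1 \to U \leftarrow U_2$, which is just an unpacked version of the same citation), then realize these moves by band surgeries and finish with Theorem~\ref{thm:main1}. The one point worth flagging is that what you call the ``main obstacle'' --- needing a crosscap analog of Proposition~\ref{prop:multisecting_tubes} --- is not actually an obstacle, and trying to parallel that proposition's proof is more machinery than the situation warrants. The guiding-arc technology in Proposition~\ref{prop:multisecting_tubes} exists because the isotopy class of a $1$--handle addition depends on an arc that can be knotted in the complement of $F$. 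A crosscap sum with $\Pcal_\pm$, by contrast, is supported in a small ball around a single point of $F$, and its isotopy class on a connected surface is determined solely by which component receives it and the sign. Since every component of $F$ meets the central surface $\Sigma$ in bridge points, you may isotope that point into a collar of $\Sigma$ along a strand of the relevant component; Example~\ref{ex:crosscaps} (with $m=1$ and framing $\pm 1$) then gives the required band surgery directly, with no $0$--sector perturbations or tracking argument needed. This is exactly why the paper cites Example~\ref{ex:crosscaps} alongside Proposition~\ref{prop:multisecting_tubes} without further elaboration.
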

\begin{proof}
Let $\Tcal_1$ and $\Tcal_2$ be two multiplane diagrams representing the surfaces $F_1$ and $F_2$, respectively. By \cite[Thm 2.3]{HK79b}, there is a sequence of 1--handle additions, crosscap sums, and their inverses, taking $F_1$ into $F_2$. By Proposition \ref{prop:multisecting_tubes} and Example \ref{ex:crosscaps}, arbitrary 1--handle additions and crosscap sums can be performed using band surgeries on multiplane diagrams. Thus, we can find a finite sequence of band surgeries and their inverses, that turn $\Tcal_1$ into a multiplane diagram $\Tcal'_1$ representing the surface $F_2$. Hence, by Theorem \ref{thm:main1} there exists a finite sequence of multiplane moves turning $\Tcal'_1$ into $\Tcal_2$. 
\end{proof}

%%%%%%%%%%%%%%%%%%%%%%%%%%%%%%%%%%%%%%%%%%%

%%%%%%%%%%%%%%%%%%%%%%%%%%%%%%%%%%%%%%%%%%%%
\section{Graphs induced by bridge multisections}\label{sec:graphs}
An \emph{edge coloring} of a graph is a proper coloring of the edges, meaning an assignment of colors to edges so that no vertex is incident to two edges of the same color. An edge coloring with $n$ colors is called an \emph{$n$--edge coloring}. The spine of a bridge multisection is an $n$--valent graph with an edge coloring where the edges of color $i$ are the arcs of the $i$-th tangle $T_i$. %The spine of a bridge multisection is an example of an $n$--valent graph with an $n$--edge coloring. 
This section aims to show that all $n$--edge colored $n$--valent graphs appear this way. %Theorem BLAH states that any $n$--valent graph with a given $n$--coloring is the spine of a bridge multisection for an unknotted surface. %This section shows that any such graph is the spine of some bridge multisection of an unknotted surface in $S^4$.
We do this by studying how the graph type of the multisection spine changes under band surgery along an admissible tuple $(\rho,f)$. 

Let $\Gamma$ be an $n$--valent graph with an $n$--edge coloring. % $C:E(\Gamma)\ra \{1,\dots, n\}$. 
Given an edge $e$, we can modify $\Gamma$ by an operation similar to an edge-contraction that preserves the given $n$--edge coloring. We define the \emph{edge-compression} of $\Gamma$ along $e$, denoted by $\Gamma|e$, as follows: 
The vertex set of $\Gamma|e$ equals $V(\Gamma)\setminus \partial e$. Edges parallel to $e$ in $G$ get deleted in $G|e$. There are two types of edges for $\Gamma|e$. If an edge $f$ in $\Gamma$ is not adjacent nor parallel to $e$, then $f$ is also an edge of $\Gamma|e$ with the same color. If two distinct edges $f$ and $g$ of $\Gamma$ are adjacent to $e$ and have the same color, then we add a new edge to $\Gamma|e$ that connects the endpoints of $f$ and $g$ not in $e$ %, that is the vertices $\left(\partialf\cup \partialg\right)\setminus \partial e$; this new edge will have the same color as $f$ and $g$.
with the same color as $f$ and $g$. Figure \ref{fig:k6_1} has examples of edge-compressions. 
By construction, $\Gamma|e$ is also an $n$--valent graph with an $n$--edge coloring that agrees in the common edges of $\Gamma$ and $\Gamma|e$. 

%%%%%
\begin{figure}[h]
\centering
\includegraphics[width=.65\textwidth]{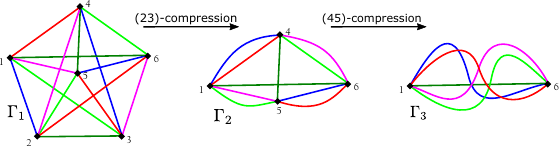}
\caption{Two edge-compressions turning the graph $K_6$ into a graph with two vertices.}
\label{fig:k6_1}
\end{figure}
%%%%%

% \begin{lemma}
% Let $\Tcal$ and $\Tcal'$ be two multiplane diagrams with spines $\Gamma$ and $\Gamma'$, respectively. Suppose that $\Tcal'=\Tcal[\rho,f]$ for some admissible tuple of arcs $(\rho,f)$. If $\rho$ has at least one type 0 arc, then $\Gamma\cong \Gamma'|e$ for some edge of $\Gamma'$. %$e\in E(\Gamma')$. 
% \end{lemma}

% \begin{proof}
% By assumption, there is a type 0 arc $\rho_{i_0}$ for some $1\leq i_0\leq n$. The tangle $T_{i_0}[\rho_{i_0}]$ is obtained from $T_{i_0}$ by adding a trivial arc around $\rho_{i_0}$. This new arc is edge $e$ of the spine of $\Gamma'$ with $\Gamma'|e\cong \Gamma$. 
% \end{proof}

As seen in \cite[Sec 3.5]{MTZ}, at the level of the 1--skeleta, edge compressions are inverses of band surgeries along admissible tuples of arcs. 
%One can see that edge-compression is the inverse operation to surgery along an admissible tuple of framed arcs. 
More precisely, %suppose that $(\rho,f)$ is an admissible tuple for a multiplane diagram $\Tcal$ with $\rho_{i_0}$ an arc of type 0 for some $i_0$. 
consider two multiplane diagrams $\Tcal_0$ and $\Tcal_1$ related by band surgery along some admissible tuple of arcs $(\rho,f)$; i.e. $\Tcal_1=\Tcal_0[\rho,f]$. If $\rho$ has at least one type 0 arc, then the skeleton of $\Tcal_0$ is isomorphic to an edge-compression on the skeleton of $\Tcal_1$. 
The following lemma shows that this relation can be reversed. 

\begin{lemma}\label{lem:building_rho}
Let $\Gamma_1$ be an $n$--valent graph with an $n$--edge coloring. Assume that there is an edge $e$ of $\Gamma_1$ and a multiplane diagram $\Tcal_0$ satisfying 
that the spine of $\Tcal_0$ with its induced $n$--edge color is isomorphic to $\Gamma_1|e$. 
%that $\Gamma|e$ is the $n$--edge colored spine of $\Tcal$. 
%$\Gamma|e=\Gamma_{\Tcal}$. 
Then there exists an admissible tuple $(\rho,f)$ for $\Tcal_0$ satisfying 
that the spine of $\Tcal_0[\rho,f]$ with its induced coloring is isomorphic to $\Gamma_1$. 
%$\Gamma = \Gamma_{\Tcal[\rho,f]}$. 
\end{lemma}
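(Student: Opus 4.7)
The plan is to construct $(\rho, f)$ color-by-color, using the local structure of $\Gamma_1$ around the endpoints $v, w$ of $e$. Let $c$ denote the color of $e$. Since $\Gamma_1$ is $n$-valent and properly $n$-edge-colored, for each color $i \in \{1, \dots, n\}$ there are unique color-$i$ edges $a_i, b_i$ of $\Gamma_1$ incident to $v$ and $w$, respectively. First I would classify the colors into two classes: \emph{(I)} $a_i = b_i$ (the parallel-to-$e$ colors, including $i = c$ where $a_c = b_c = e$), corresponding to edges deleted by the edge-compression; and \emph{(II)} $a_i \neq b_i$, for which $a_i$ and $b_i$ merge into a single edge $m_i$ of $\Gamma_0 = \Gamma_1 | e$.

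Next I would pick a point $* \in \operatorname{int}(\Sigma)$ disjoint from the punctures. For each class-(I) color $i$, let $\rho_i$ be a type 0 increasing arc in $B_i$ starting at $*$ and disjoint from $T_i$. For each class-(II) color $i$, let $\tau_i \subset T_i$ be the strand whose spine-image under the fixed isomorphism between the spine of $\Tcal_0$ and $\Gamma_0$ is the edge $m_i$; take $\rho_i$ to be a type 1 increasing arc from $*$ to an interior point of $\tau_i$. Such arcs can be drawn along bridge disks for the strands $\tau_i$.

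The next step is to verify admissibility (Definition \ref{def:admissible_arcs}). Conditions (1) and (2) are immediate from the construction. Conditions (3) and (4) concern pairs of consecutive type-1 arcs $\rho_i, \rho_{i+1}$: whether their endpoints lie on the same or distinct components of $L_i = T_i \cup \overline{T}_{i+1}$ is determined by whether $m_i$ and $m_{i+1}$ lie on a common $\{i, i+1\}$-alternating cycle of $\Gamma_0$. In both cases, because $L_i$ is an unlink, the required 2-spheres can be produced after a small isotopy of $\rho_i \cup \rho_{i+1}$ supported near $*$ to achieve the crossingless configurations of Figure \ref{fig:existence_Delta}. I would then choose framings $f_i$ that agree at $*$, taking the 0-framing from Lemma \ref{lem:framing_condition} for every same-component pair so that no crosscap is introduced.

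Finally I would verify that the spine of $\Tcal_0[\rho, f]$ is isomorphic to $\Gamma_1$. Band surgery introduces two new punctures $v', w'$ near $*$, which I identify with $v$ and $w$. By the local surgery models of Figure \ref{fig:surgery_models}, each class-(I) arc contributes a new color-$i$ edge $v'w'$, recovering $a_i = b_i$ (in particular recovering $e$ when $i = c$); each class-(II) arc splits the strand $\tau_i$ realizing $m_i$ into two strands whose spine-images are the color-$i$ edges $a_i$ and $b_i$ of $\Gamma_1$. The main obstacle is arranging the arcs $\rho_i$ so that all admissibility conditions hold simultaneously: the constraint that every $\rho_i$ emanates from the common point $*$ with framings agreeing there forces compatible positioning of the bridge disks across all indices, and the 2-sphere requirements of (3) and (4) must be verified for every consecutive type-1 pair, which will require a careful isotopy argument respecting the monotonicity of the arcs.
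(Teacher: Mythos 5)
Your overall plan — classify colors into those parallel to $e$ (type 0 arcs) and those where two $\Gamma_1$-edges merge into one $\Gamma_0$-edge (type 1 arcs), place all arcs at a common point $*$, and read off the new spine from the local surgery models — matches the paper's construction. The problem is that you treat the verification of admissibility for consecutive type-1 pairs as a minor technicality, when it is actually the entire content of the proof.

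Specifically, your claim that \emph{``because $L_i$ is an unlink, the required 2-spheres can be produced after a small isotopy of $\rho_i\cup\rho_{i+1}$ supported near $*$''} is not correct. The admissibility conditions (3) and (4) of Definition~\ref{def:admissible_arcs} are global unlinkedness conditions on $\rho_i\cup\rho_{i+1}$ relative to $L_i$: the union arc must be isotopable to a crossingless configuration as in Figure~\ref{fig:existence_Delta}. If you pick $\rho_i$ along a bridge disk for $\tau_i$ and $\rho_{i+1}$ along a bridge disk for $\tau_{i+1}$ \emph{independently}, then the bridge disk for $\tau_i$ can wind around strands of $T_{i+1}$ (and vice versa), so the arc $\rho_i\cup\rho_{i+1}$ can be essentially linked with $L_i$. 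No isotopy supported near $*$ undoes that. You flag this difficulty yourself in your final sentence, but flagging it is not the same as resolving it.

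The paper's resolution is genuinely constructive and sequential: it builds $\rho_{i+1}$ \emph{after} $\rho_i$ is fixed and after bringing $L_i$ to a crossingless $b$-bridge diagram via Solanki's moves (braid isotopies, pocket moves, flips, destabilizations) while tracking $\rho_i$ as a monotone arc in a neighborhood of $\Sigma$. In that normalized picture, $\rho_i$ braids through some strands of $L_i$, and $\rho_{i+1}$ is then chosen inside $T_{i+1}$ to \emph{cancel} exactly that braiding, so that $\rho_i\cup\rho_{i+1}$ becomes crossingless with $L_i$ and condition (3)/(4) holds by construction. Your proposal has neither this normalization step nor the cancellation choice of $\rho_{i+1}$; it fixes both arcs independently from bridge disks and hopes for admissibility afterwards, which is the missing idea. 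You would also need to say something about how the framings $f_i$ on the type-1 arcs are selected to land the new strand on the correct puncture (the paper does this at the end of its recursion), rather than just defaulting to the $0$-framing.
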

\begin{proof}
In what follows, we will recursively build $\{(\rho_{i+1},f_{i+1})\}_{i=1}^n$. Without loss of generality, we assume that the color of $e$ is $1$. This way, $(\rho_1,f_1)$ can be chosen to be any type 0 framed arc in $T_1$. Note that, regardless of our final choice for $(\rho_n,f_n)$, the pair $(\rho_n,\rho_1)$ will satisfy the admissibility conditions of Definition \ref{def:admissible_arcs}(3-4) as $\rho_1$ is already type 0. 
Assume that the arc $(\rho_i,f_i)$ has been defined for some $i=1,\dots, n-1$. In order to choose $(\rho_{i+1},f_{i+1})$, we look at the $(i+1)$--colored edges in $\Gamma$ that are adjacent to $e$. We have two cases, depending on how many such edges are. 
If there is only one such edge, this edge is parallel to $e$, and we can then choose $\rho_{i+1}$ to be a type 0 arc in $T_{i+1}$ with any framing $f_{i+1}$. 

Suppose there exist two $(i+1)$--colored edges in $\Gamma$ adjacent to $e$ denoted by $g$ and $h$. By definition of $\Gamma|e$, the edges $g$ and $h$ get traded with a new $(i+1)$--colored edge connecting the vertices in $(\partial g \cup \partial h) \setminus \partial e$. As $\Gamma|e$ is the spine of $\Tcal_0$, this new edge corresponds to a strand $t_{i+1}$ of $T_{i+1}$. %So $\rho_{i+1}$ will be a type 1 arc of $T_{i+1}$ with one endpoint in $t_{i+1}$. 
If $\rho_i$ is type 0, 
% any type 1 arc with one endpoint in $t_{i+1}$ will do for $\rho_{i+1}$. 
we can choose $\rho_{i+1}$ to be any type 1 arc with one endpoint in $t_{i+1}$ and the admissibility conditions of Definition \ref{def:admissible_arcs}(3-4) will be satisfied for $\rho_i\cup\rho_{i+1}$. 
The framing of $\rho_{i+1}$ will be chosen so that $T_{i+1}[\rho_{i+1},f_{i+1}]$ connects the correct endpoints of $g$ and $h$. 

We are left to discuss the case when $\rho_i$ is a type 1 arc. The following argument takes place in a given diagram for the bridge splitting $(S^3,L_i)=(B_i,T_i)\cup_\Sigma (B_{i+1},T_{i+1})$. 
First, slide the endpoints of $\rho_i$ so that this arc is inside a small neighborhood of the bridge sphere, $N(\Sigma)$. By a result of Solanki \cite[Cor 2]{solanki_plats_unlink}, there is a sequence of braid isotopies, pocket moves, flip moves, and bridge destabilizations taking $L_i=T_i\cup T_{i+1}$ to a crossingless diagram of $L_i$ in with $|L_i|$ bridges. See \cite[Sec 2.2]{solanki_plats_unlink} for detailed definitions of these moves. We keep track of $\rho_i$ through this sequence and make sure that $\rho_i$ is still a decreasing arc lying in $N(\Sigma)$. 
Note that braid isotopies and flip moves preserve the desired properties of $\rho_i$ as they preserve the regular levels of the Morse function associated with the genus zero Heegaard splitting of $S^3$. 
Pocket moves can be pre- and post-composed with braid isotopies so that they are supported away from $N(\Sigma)$, thus preserving the decreasing property of $\rho_i$. 
Every time we find a destabilization move in our sequence, instead of canceling the pair of critical points, we can push the $(\min,\max)$ pair to be destabilized to lie inside $N(\Sigma)$ as in Figure \ref{fig:building_rho}. Then, future moves in our sequence can be replicated for this $b$--bridge presentation. 

%%%%%
\begin{figure}[h]
\centering
\includegraphics[width=.5\textwidth]{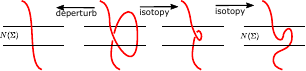}
\caption{How to isotope $L_i$ (right) instead of performing a destabilization (left).}
\label{fig:building_rho}
\end{figure}
%%%%%

At the end of the above process, we are left with a $b$--bridge crossingless diagram for $L_i$ and a type 1 arc $\rho_i$ that braids with some strands of $L_i$. 
We can then choose $\rho_{i+1}$ so that it connects $\{*\}=\rho_i\cap int(\Sigma)$ with our strand $t_{i+1}$ while undoing all the braiding caused by $\rho_i$. This way, $\rho_i\cup \rho_{i+1}$ can be isotoped (via Reidemeister II moves) to have no crossings with $L_i$. Thus, $\rho_i\cup \rho_{i+1}$ satisfies the admissibility conditions in Definition \ref{def:admissible_arcs}(3-4). To end, we pick the framing $f_{i+1}$ so that $f_i+f_{i+1}$ is zero or $\pm 1$, depending on which endpoint of $e$ (puncture of $\Sigma$) is connected by the arcs of $t_{i+1}[\rho_{i+1},f_{i+1}]$. 
This concludes the construction of $(\rho,f)$. 
\end{proof}

We are ready to prove the main theorem of this section. The idea of the proof is to take a sequence of edge-compressions turning a given graph into a graph with two edges, and then undo the edge-compressions using Lemma \ref{lem:building_rho}. Figures \ref{fig:k6_1} and \ref{fig:k6_2} are the result of running this procedure on the complete graph in six vertices $K_6$. 

%%%%%%
%\begin{figure}[h]
%\centering
%\includegraphics[width=.85\textwidth]{images/fig_k6_0.PNG}
%\caption{words }
%\label{fig:k6_0}
%\end{figure}
%%%%%%

%%%%%
\begin{figure}[ht]
\centering
\includegraphics[width=.9\textwidth]{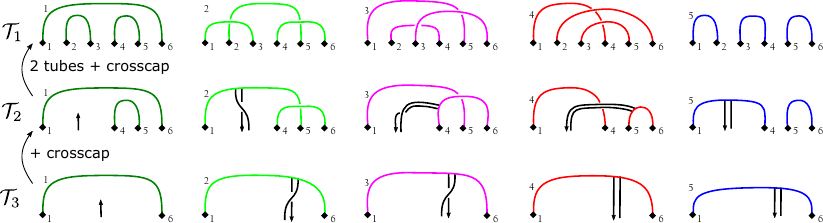}
\caption{Guided by the compressions in Figure \ref{fig:k6_1}, we found admissible tuples of arcs turning the 1--bridge 5--section of an unknotted 2--sphere into a 3--bridge 5--section with spine isomorphic to $K_6$. One can check how the band surgeries change the multisected surface using the reduction rules on Remark \ref{rem:counting_tubes}; see Example~\ref{fig:fig25}.}
\label{fig:k6_2}
\end{figure}
%%%%%

\begin{theorem}\label{thm:Tait_spines}
Let $\Gamma$ be an $n$--valent graph with an $n$--edge coloring. There is a bridge multisection $\Tcal$ for an unknotted surface in $S^4$ with 1--skeleton isomorphic to $\Gamma$ and inducing the given edge coloring of $\Gamma$. 
\end{theorem}

\begin{proof}
We only discuss the case when $\Gamma$ is connected. The disconnected case will follow since the spine of the distant sum of bridge multisections is the disjoint union of their respective spines (see Section \ref{sec:sums}). 
Let $\{e_1,\dots, e_b\}$ be the edges of $\Gamma$ with color $1$. Recursively, define $\Gamma_1=\Gamma$ and $\Gamma_{k+1}=\Gamma_k|e_k$ for $k=1,\dots, b-1$. By construction, $\Gamma_b$ has two vertices and $n$ parallel edges of distinct colors. Thus $\Gamma_b$ is the spine of a \mbox{1--bridge} $n$--section of an unknotted 2--sphere. Denote this multiplane diagram by $\Tcal_b$. We now apply Lemma \ref{lem:building_rho} multiple times to get a sequence of admissible tuples $(\rho^1,f^1),\dots, (\rho^{b-1},f^{b-1})$ undoing the edge-compressions $\Gamma_1\rightarrow \cdots \rightarrow \Gamma_b$. More precisely, they satisfy that 
\[ \Tcal_{b-j}=\Tcal_{b-j+1}[\rho^j,f^j] \quad \text{and} \quad 
%\Gamma_{\Tcal_{b-j}}=\Gamma_{b-j},\]
\Gamma_{b-j} \text{ is the spine of } \Tcal_{b-j},\]
for every $j=1,\dots, b-1$. In particular, $\Tcal_1$ satisfies that $\Gamma=\Gamma_1$ is the spine of $\Tcal_1$
By Theorem \ref{thm:surgery_rhos_tubing}, the surface represented by $\Tcal_1$ is obtained by a sequence of 1--handles and crosscaps added to an unknotted \mbox{2--sphere.} By Lemma \ref{lem:tubing_preserves_unknot}, these operations (applied to connected surfaces) preserve the property of being unknotted, thus $F_{\Tcal_1}$ is still unknotted. 
\end{proof}

The proof of the following result is the same as that of \cite[Cor 1.2]{MTZ}.
\begin{corollary}
    Every multiplane diagram $\mathcal{P}$ of a knotted surface $\mathcal{K}\subset S^4$ can be converted to a multiplane diagram $\mathcal{P}'$ of an unknotted surface $\mathcal{U}\subset S^4$ after a sequence of interior Reidemeister moves and crossing changes.
\end{corollary}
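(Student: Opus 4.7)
The plan is to deduce this corollary directly from Theorem \ref{thm:Tait_spines}, mirroring the argument of \cite[Cor 1.2]{MTZ}. Given a multiplane diagram $\mathcal{P} = (T_1, \dots, T_n)$ of a knotted surface $\mathcal{K} \subset S^4$, the first step is to extract its spine $\Gamma$ together with the induced $n$-edge coloring. Applying Theorem \ref{thm:Tait_spines} to $\Gamma$ produces a multiplane diagram $\mathcal{P}' = (T'_1, \dots, T'_n)$ of some unknotted surface $\mathcal{U} \subset S^4$ whose spine, as an edge-colored graph, is isomorphic to $\Gamma$. After identifying the two spines, we may assume that for each $i \in \mathbb{Z}_n$ the tangles $T_i$ and $T'_i$ lie in the same bridge 3-ball $B_i$, share the same endpoints on $\Sigma$, and realize the same pairing of those endpoints into strands.

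The main step is then the standard tangle fact: two trivial tangle diagrams in a 3-ball with identical endpoint sets and identical strand pairings are related by a finite sequence of interior Reidemeister moves and crossing changes. One proves this by first bringing both $T_i$ and $T'_i$, via interior Reidemeister moves, to diagrams whose underlying planar shadows agree with a fixed crossingless shadow realizing the common strand pairing (such a shadow exists because both tangles are trivial); at this point the two diagrams differ only in the over/under information at each crossing, and a sequence of crossing changes converts one into the other. Concatenating these moves tangle-by-tangle for $i = 1, \dots, n$ produces the required sequence transforming $\mathcal{P}$ into $\mathcal{P}'$.

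The main obstacle is verifying the tangle-shadow lemma above in the precise form we need, namely allowing intermediate stages to fail the unlink condition across consecutive pairs. This is not a genuine obstruction for the corollary, though: the statement only asks for a sequence of interior Reidemeister moves and crossing changes between $\mathcal{P}$ and $\mathcal{P}'$, not that each intermediate tuple be a multiplane diagram. Since the starting tuple $\mathcal{P}$ and the final tuple $\mathcal{P}'$ (furnished by Theorem \ref{thm:Tait_spines}) are both genuine multiplane diagrams — of $\mathcal{K}$ and of the unknotted $\mathcal{U}$, respectively — the corollary follows at once.
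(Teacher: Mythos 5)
Your approach is correct and essentially identical to the paper's, which just invokes \cite[Cor 1.2]{MTZ}: apply Theorem~\ref{thm:Tait_spines} to the spine $\Gamma$ of $\mathcal{P}$ to produce $\mathcal{P}'$ with the same edge-colored spine, then use the standard fact that two tangle diagrams with the same endpoints and strand pairing are related by interior Reidemeister moves and crossing changes. Your observation that intermediate tuples need not be multiplane diagrams, and that this is harmless, is also correct.

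One small but real inaccuracy in your sketch of the tangle lemma: a crossingless shadow realizing the common strand pairing need not exist. If the pairing is not a non-crossing matching on the boundary circle, any shadow must have crossings; for example, the $2/1$ rational tangle is trivial (boundary-parallel rel endpoints) but admits no crossingless diagram, and its minimal shadow has two crossings. So ``such a shadow exists because both tangles are trivial'' is not valid reasoning. The fix is the standard ascending-diagram argument: given any tangle diagram, change crossings so that each strand passes under all previously traversed strands; the result is a trivial tangle whose isotopy class depends only on the pairing (up to one extra crossing change for which arc lies on top), and Reidemeister moves then carry both ascended diagrams to a common standard picture. This is the argument implicitly invoked in \cite[Cor 1.2]{MTZ}, and it makes your concatenation step go through.
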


\begin{remark}
At this point, the invested reader may wonder if Theorem~\ref{thm:Tait_spines} follows from its $n=3$ version proven in \cite[Thm 1.1]{MTZ}; that every 3--valent graph with a 3--edge coloring is the spine of an unlink of unknotted surfaces. Below, we describe an inductive approach one may try that fails. Thus, this motivates the use of our technology in Lemma~\ref{lem:building_rho}. That said, there may still be an alternative proof of Theorem~\ref{thm:Tait_spines} unknown to the authors of this work. 

First, observe that removing all edges with a fixed color from a $4$-colored graph yields a $3$-valent graph with a $3$-edge coloring. So, by \cite{MTZ}, every 4-colored graph is the spine of a bridge 4-section $\Tcal=(T_1,T_2,T_3,T_4)$ for $F$ where $\Tcal'=(T_1,T_2,T_3)$ and $\Tcal''=(T_3,T_4,T_1)$ are spines for unlinks for unknotted surfaces $F'$ and $F''$, respectively. It is not hard to see that $F$ is built from $F'$ and $F''$ by removing disks bounded by $T_3\cup \T_1$ from both $F'$ and $F''$ and gluing the resulting surfaces. In other words, $F$ is obtained from $F\sqcup F''$ by adding $|T_1\cup \T_3|$ 1--handles between the two surface-links. As both $F'$ and $F''$ are unknotted, one may want to show that $F$ is also unknotted. This is not always the case; e.g., the spun trefoil can be described by one 1--handle addition to the distant sum of two unknotted spheres as in Figure~\ref{fig:exam_tube_2}. Another comment is that in the same figure, the tuples of tangles (blue, green, purple) and (purple, red, blue) are triplane diagrams for unlinks of 2--spheres while the 4--sected surface is knotted. 
\end{remark}

%%%%%%%%%%%%%%%%%%%%%%%%%%%%%%%%%%%%%%%%%%%%%
\bibliographystyle{alpha}
\bibliography{sources}
\end{document}